\documentclass{article}
\usepackage{amsmath,amsthm,amssymb,graphicx}

\newcommand{\ga}{\alpha}
\newcommand{\gb}{\beta}

\newcommand{\gw}{\omega}

\newcommand{\supp}{\mathrm{supp}}

\newcommand{\dom}{\mathrm{dom}}
\newcommand{\rng}{\mathrm{rng}}

\newcommand{\liff}{\leftrightarrow}

\newcommand{\eps}{\varepsilon}

\newcommand{\power}{\mathcal{P}}
\newcommand{\aut}{\mathrm{Aut}}
\newcommand{\acts}{\curvearrowright}
\newcommand{\pstab}{\mathrm{pstab}}
\newcommand{\stab}{\mathrm{stab}}
\newcommand{\worank}{\mathrm{wor}}
\newcommand{\clirank}{\mathrm{clir}}
\newcommand{\hf}{\mathrm{HF}}

\newtheorem{theorem}{Theorem}[section]

\newtheorem{claim}[theorem]{Claim}
\newtheorem{corollary}[theorem]{Corollary}

\newtheorem{proposition}[theorem]{Proposition}

\theoremstyle{definition}
\newtheorem{definition}[theorem]{Definition}
\newtheorem{example}[theorem]{Example}

\title{Nonarchimedean groups and the axiom of choice\footnote{2020 AMS subject classification 03E25, 22F05.}}

\author{
	Justin Young\\
	University of Florida\\
	jyoung3@ufl.edu
	\and
	Jind{\v r}ich Zapletal\\
	University of Florida\\
	zapletal@ufl.edu}

\begin{document}
	\maketitle
	
	\begin{abstract}
	We prove several novel connections between properties of non-archimedean groups and fragments of the axiom of choice which hold in their associated permutation model.	
	\end{abstract}

\section{Introduction}
In this paper, we prove several theorems connecting properties of non-archimedean Polish groups with fragments of Axiom of Choice which hold in their associated permutation models. Let $X$ be a countable set with some relations and operations on it, and let $\aut(X)\acts X$ be the group of all automorphisms of $X$ viewed as a closed subgroup of the full symmetric group acting on the set $X$ by application. Consider the ideal $I$ of finite subsets of $X$; then the ideal $I$ is invariant under the action and $\aut(X)\acts X, I$ is a dynamical ideal in the sense of \cite{z:dideals}. It naturally generates a Fraenkel--Mostowski style permutation model $W[[X]]$ of ZFA, the choiceless set theory with atoms, in which $X$ is identified with its set of atoms. 

Section~\ref{pliablesection} investigates the first correspondence:

\begin{definition}
	A non-archimedean Polish group $\Gamma$ is \emph{pliable} if there is an open neighborhood of the unit such that its conjugates form a topology basis around the unit, and in addition, this is true of all open subgroups of $\Gamma$.
\end{definition}

\begin{definition}
	(ZF) The \emph{Axiom of Well-ordered Choice} \cite[Form 40]{howard:ac} is the statement that every well-orderable family of nonempty sets has a choice function.
\end{definition}

\begin{theorem}
	\label{theoremA}
	Let $X$ be a countable structure. The following are equivalent:
	
	\begin{enumerate}
		\item The group $\aut(X)$ is pliable;
		\item in $W[[X]]$, the Axiom of Well-ordered Choice holds.
	\end{enumerate}
\end{theorem}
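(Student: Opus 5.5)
Write $\Gamma=\aut(X)$. I will work with the standard description of $W[[X]]$: an object is symmetric if its stabilizer contains the pointwise stabilizer $\pstab(a)$ of some finite $a\subseteq X$, and it lies in the model if it is hereditarily symmetric. The sets $\pstab(a)$ for finite $a$ form a neighborhood basis of the unit. Every open subgroup of $\Gamma$ contains some $\pstab(a)$, and is the stabilizer of a set in the model, for instance an orbit of a suitable object. So statements about open subgroups translate into statements about objects of $W[[X]]$ with finite support.

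The plan is to reduce both directions to one combinatorial equivalent. Well-ordered choice in $W[[X]]$ is equivalent, modulo routine details, to the following. For every open subgroup $\Delta\le\Gamma$ and every family $\{A_\ga:\ga\in\kappa\}$ in the model, indexed by an ordinal and fixed pointwise by $\Delta$ (a well-orderable family has some support $a$, so $\Delta=\pstab(a)$), there is a single finite $b\supseteq a$ such that each $A_\ga$ contains an element fixed by $\pstab(b)$. Once such a $b$ exists, a choice function is obtained by choosing, for each $\ga$, a $\pstab(b)$-fixed element of $A_\ga$ in the ground model. That function is supported by $b$. Conversely, a choice function in the model has a support $b$, and then each chosen element is $\pstab(b)$-fixed.

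For (1)$\Rightarrow$(2), apply pliability to $\Delta=\pstab(a)$. This gives an open $O\subseteq\Delta$ around the unit whose $\Delta$-conjugates form a basis; shrink $O$ to $\pstab(b)$ for some finite $b\supseteq a$. Each nonempty $A_\ga$ contains some $x$ with support $c$, so $x$ is fixed by $\pstab(c)$. Pick a $\Delta$-conjugate $g\,\pstab(b)\,g^{-1}=\pstab(gb)\subseteq\pstab(c)$. Then $g^{-1}x$ is fixed by $\pstab(b)$, and $g^{-1}x\in A_\ga$ because $g\in\Delta$ fixes $A_\ga$. So $b$ works uniformly for every $\ga$.

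For (2)$\Rightarrow$(1), suppose some open subgroup $\Delta=\pstab(a)$ fails the property. Then for every finite $b\supseteq a$ there is a finite $c$ such that no $\Delta$-conjugate of $\pstab(b)$ lies inside $\pstab(c)$. Take the well-ordered family indexed by the countably many finite sets $c\subseteq X$ containing $a$; the index set is countable, so it can be well-ordered in the ground model and that ordering transferred, giving an ordinal indexing. Let $A_c$ be the $\Delta$-orbit of $c$ viewed as a finite set of atoms. Each $A_c$ is fixed by $\Delta$, and the family is $\Delta$-fixed, hence in the model and well-orderable there. A choice function with support $b\supseteq a$ picks $gc\in A_c$ with $g\in\Delta$, fixed pointwise by $\pstab(b)$, so that $gc\subseteq \mathrm{acl}$-type closure of $b$. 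The step I expect to be the main obstacle is the next one: converting "$\pstab(b)$ fixes $gc$ pointwise" into the containment $\pstab(b)\subseteq\pstab(gc)$, which is immediate, and hence $\pstab(g^{-1}b)\subseteq\pstab(c)$. Here $g^{-1}\pstab(b)g$ is a $\Delta$-conjugate, contradicting the choice of $c$.

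The remaining care is bookkeeping. Different open subgroups $\Delta$ need to be handled, not just $\Gamma$, which is why pliability demands the property for all open subgroups. The indexing family must also be verified to be genuinely hereditarily symmetric.
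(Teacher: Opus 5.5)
Your direction (1)$\Rightarrow$(2) is correct. Your reformulation of well-ordered choice is also sound and self-contained: for each support $a$, a single finite $b\supseteq a$ such that every $A_\ga$ contains a $\pstab(b)$-fixed element. It replaces the paper's appeal to \cite[Theorem 3.3]{z:dideals}, that is, cofinal orbits for the dynamical closure $J$.

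The gap is in (2)$\Rightarrow$(1), at exactly the step you flagged. You take $A_c$ to be the $\Delta$-orbit of $c$ \emph{as a set of atoms}. A choice function with support $b$ then picks some $gc\in A_c$ that is fixed by $\pstab(b)$ as an element of the model. That gives only $\pstab(b)\subseteq\stab(gc)$: the finite set $gc$ is invariant setwise, so its points merely have finite $\pstab(b)$-orbits. This is your ``acl-type closure'', and nothing forces the pointwise containment $\pstab(b)\subseteq\pstab(gc)$. For instance, if $X$ consists of pairs $\{x_n,y_n\}$, each named by its own unary predicate, then every pair is fixed by all of $\aut(X)$ while no point is. So the ``immediate'' step fails. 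What you actually obtain is $g^{-1}\pstab(b)g\subseteq\stab(c)$, which does not contradict your choice of $c$, since that choice was about $\pstab(c)$. The paper's restatement $c\subseteq\overline{\gamma b}$ is precisely the pointwise (dynamical closure) condition you need, not the setwise one.

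The repair is the rigidification trick the paper itself uses in Section~\ref{finalsection}. Fix an enumeration $\bar c$ of each $c$, or equivalently a linear order on $c$, since finite linear orders are rigid. Then let $A_c$ be the $\Delta$-orbit of $\bar c$ rather than of $c$.
\begin{itemize}
\item Each $A_c$ is still $\Delta$-invariant, and the indexed family is still in $W[[X]]$ with support $a$.
\item A $\pstab(b)$-fixed element $g\bar c\in A_c$ now has every coordinate fixed. Hence $\pstab(b)\subseteq\pstab(gc)$, so $g^{-1}\pstab(b)g\subseteq\pstab(c)$ with $g^{-1}\in\Delta$.
\item This contradicts the choice of $c$ for this $b$.
\end{itemize}

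You should also say explicitly why it suffices to refute the property for groups of the form $\pstab(a)$. If $\pstab(a)\subseteq\Delta$ has an open $O$ whose $\pstab(a)$-conjugates form a basis at the unit, then the $\Delta$-conjugates of the same $O$ do as well. The paper handles this reduction directly.

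With these two fixes your argument is a correct proof that works straight from the definition of the permutation model. The paper instead routes through the machinery of \cite{z:dideals} and only does the group-theoretic translation itself.
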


\noindent  In the context of permutation models obtained from the ideal of finite sets, the axiom of well-ordered choice is in fact equivalent to the axiom of countable choice \cite{z:dideals}. Pliable groups form a very special subclass of feebly locally compact groups of \cite[Section 2.1]{rosendal:global}. A great part of our contribution is in fact a construction of non-discrete pliable groups. They appear to be related to the so-called Knight's model \cite{knight:model}; our construction is much simpler than that of Julia Knight though. 

Section~\ref{clisection} investigates the impact of complete group metrics on permutation models:

\begin{definition}
	A topological group $\Gamma$ is \emph{cli} if it carries a complete left-invariant metric.
\end{definition}

\begin{definition}
	(ZF) The \emph{well-orderability hierarchy} is defined by recursion on ordinal $\ga$: $W_0=$ the class of well-orderable sets, $W_{\alpha+1}=$the class of all unions of well-orderable subsets of $W_\ga$, and $W_\ga=\bigcup_{\gb\in\ga}W_\gb$ if $\ga$ is limit.
\end{definition}

\begin{theorem}
	\label{theoremB}
	Let $X$ be a countable structure. The following are equivalent:
	
	\begin{enumerate}
		\item the group $\aut(X)$ is cli;
		\item in $W[[X]]$, every set appears at some stage of the well-orderability hierarchy.
	\end{enumerate}
\end{theorem}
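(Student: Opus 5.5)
We plan to use the standard characterization of cli non-archimedean groups: $\aut(X)$ is cli iff there is no sequence $g_n\in\aut(X)$ which is left-Cauchy (i.e.\ $g_n^{-1}g_m\to 1$ for $m>n\to\infty$) but does not converge, i.e.\ iff the two-sided uniformity is complete. For automorphism groups of countable structures this is equivalent to a combinatorial statement: there is no strictly increasing chain of finite partial maps $p_0\subset p_1\subset\cdots$, each extendable to an automorphism, whose union is a partial map from $X$ to $X$ that does \emph{not} extend to an automorphism (the union is injective and defined on all of $X$ by construction, but its range is a proper subset of $X$). We would take this ``no proper self-embedding arising as a limit of automorphisms'' formulation as the working definition. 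We will also need the rank analysis of $W[[X]]$: every set $a$ has a finite support $e\subset X$, and $a$ belongs to a stage of the well-orderability hierarchy iff a suitable rank computed from the action of $\pstab(e)$ on the transitive closure of $a$ is well-founded.

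For (1)$\Rightarrow$(2), we argue by $\in$-induction in $W[[X]]$ that every set $a$ lies in some $W_\ga$. Suppose $a\subseteq\bigcup_\beta W_\beta$ but a failure occurs. Partition $a$ according to the orbits of $\pstab(e)$, where $e$ supports $a$. Each orbit-class $\{b\in a: b\text{ has a given type over } e\}$ is a well-orderable union of pieces indexed by the finite extensions $f\supseteq e$ supporting elements $b$. The key point is that $a$ is a union of the sets $a_f=\{b\in a: f \text{ supports } b\}$, and the family $\{a_f\}$ is parametrized by finite subsets of $X$. This family is well-orderable in $W[[X]]$ only over a fixed support, so we iterate: define a rank on finite partial isomorphisms $p$ extending $\mathrm{id}_e$ by asking whether the sets $b$ whose support lands in $\rng(p)$ already lie in $W_\ga$. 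A failure of $a$ to appear in the hierarchy gives an infinite descending path of finite partial maps, each extendable to an automorphism. We would show that this path can be chosen so that the union is an embedding $X\to X$ that is not onto, which contradicts cli. The main obstacle is making this correspondence precise. Specifically, we have to show that ill-foundedness of the rank translates exactly into a left-Cauchy non-convergent sequence, and we must keep the chains of maps coherent while doing so. To handle this we would enumerate $X$ and, at each step, extend both the domain and the values in a back-and-forth fashion only on the domain side.

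For (2)$\Rightarrow$(1), assume $\aut(X)$ is not cli, and fix a chain $p_0\subset p_1\subset\cdots$ of finite extendable maps with non-surjective union $\pi$. In $W[[X]]$ we build a hereditarily symmetric set that never enters the hierarchy. Namely, let $T$ be the tree of all finite sequences $\langle q_0,\dots,q_{n}\rangle$ where each $q_i$ is the image $g p_i g^{-1}$ type of configuration, i.e.\ the $\aut(X)$-orbit of the sequence $(\dom p_0\cup\rng p_0,\dots)$ placed into $X$. Then let $a$ be the set of branches-as-nested-sets: define $a_s$ for nodes $s$ by recursion from leaves downward is impossible, so instead define by recursion a set $A$ with $A=\{g\cdot A' : \dots\}$. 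More concretely, we take $A_n$ to be the set of $\aut(X)$-translates of the finite tuple coding $p_n$, so that $A_{n+1}$ relates to $A_n$ via the extension maps. We then show that $A_0$ is not in any $W_\ga$, by proving inductively that membership of a $p_n$-code in $W_\ga$ forces membership of a $p_{n+1}$-code in a smaller $W_\beta$, which yields a descending ordinal sequence. The non-convergence of the sequence is used here: it ensures that the relevant unions are never well-orderable, since a well-ordering would be supported by a finite set that the limit embedding misses.
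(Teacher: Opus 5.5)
Your reformulation of cli for $\aut(X)$ is correct: there is no non-surjective pointwise limit of automorphisms, equivalently no non-convergent left-Cauchy sequence. Note that this is completeness of the \emph{left} uniformity; the two-sided uniformity of a Polish group is always complete. The real gap is that the whole content of the theorem is the bridge between this condition and the well-orderability hierarchy. In both directions you leave that bridge open (you say so: ``the main obstacle is making this correspondence precise''), and the objects you propose to build it from are the wrong ones.

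Whether a set $a$ lies in $W_\ga$ depends only on how $a$ splits as a well-orderable union of \emph{subsets} of lower rank. It does not depend on where the elements of $a$ sit in the hierarchy: $a$ and $\{\{b\}\colon b\in a\}$ are in bijection in $W[[X]]$, hence have the same wo-rank, although every element of the latter is in $W_0$. So the $\in$-induction hypothesis $a\subseteq\bigcup_\gb W_\gb$ gives no leverage. Likewise, a rank on finite partial maps that asks whether the elements $b$ already lie in $W_\ga$ measures the wrong thing. Your pieces $a_f$, indexed by supports, do not form a well-orderable family, as you note, which is why you are pushed into an unspecified iteration.

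For (2)$\Rightarrow$(1), the witness $A_0$ (the $\aut(X)$-orbit of a code for $p_0$) need not lie outside the hierarchy. If $p_0=\emptyset$, or if $p_0$ lives on points with finite orbits, then $A_0$ is finite. The descent ``a $p_n$-code in $W_\ga$ forces a $p_{n+1}$-code into a smaller $W_\gb$'' also has no mechanism behind it. A well-orderable decomposition only hands you some open subgroup $\pstab(c)$ fixing its pieces, with $c$ unrelated to your chain. Those pieces are $\pstab(c)$-invariant sets, not orbits of $p_{n+1}$-codes.

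The missing idea is orbit decomposition by smaller open subgroups, which the paper uses to pass through the Allison--Panagiotopoulos cli-rank. Let $W\subseteq U$ be open subgroups and $B\in W[[X]]$. Then $U\cdot B$ is the union of its $W$-orbits. This family is well-orderable in $W[[X]]$ because each member is $W$-invariant, and the $W$-orbit of $\gamma\cdot B$ is $\gamma\cdot\bigl((\gamma^{-1}W\gamma)\cdot B\bigr)$. This is exactly the recursion defining $\clirank(\stab(B),U)$, so $\clirank(\stab(B),U)\leq\ga$ implies $\worank(U\cdot B)\leq\ga$. A set $A$ with $\pstab(e)\subseteq\stab(A)$ is then the well-orderable union of the orbits $\pstab(e)\cdot B$ for $B\in A$.

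Conversely, suppose $A=\bigcup\mathcal{B}$ with $\mathcal{B}$ well-orderable and its members of rank $<\ga$. The right subgroup is $U=\pstab(\mathcal{B})$. Each $\gamma\in U$ keeps a finite $a\subseteq A$ inside the finitely many pieces covering it, and induction gives $\clirank(\pstab(a),U)\leq\ga$. The paper applies this to $A=X$.

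Only after this translation does your left-Cauchy picture enter, as the two directions of the characterization the paper cites.
\begin{itemize}
\item Infinite rank gives non-cli. Suppose $\clirank(V,U_n)=\infty$. Let $W_n$ be a shrinking basis, and start with $U_0=U$ and $\eta_0=1$. Pick $\gamma_n\in U_n$ so that $U_{n+1}=\gamma_n(\eta_nW_n\eta_n^{-1}\cap U_n)\gamma_n^{-1}$ still has infinite rank, and set $\eta_{n+1}=\gamma_n\eta_n$. Then $(\eta_n)$ is left-Cauchy, and a limit would force $U_n\subseteq V$.
\item Non-cli gives infinite rank. Take a non-surjective limit $\pi=\bigcup_n p_n$ with $g_k\in\aut(X)$ extending $p_k$, and a point $y\notin\rng(\pi)$. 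The groups $\pstab(\rng(p_n))$ form a self-similar family, via conjugation by $g_Mg_k^{-1}$, and none of them lies in $\stab(y)$. Hence their cli-ranks over $\stab(y)$ are infinite.
\end{itemize}
Neither the orbit decomposition nor these two steps appears in your proposal.
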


\noindent An extreme case of the theorem occurs when the acting group is abelian. In such a case, in the permutation model every set is a union of well-orderable family of finite sets \cite[Theorem 6.19]{z:dideals}, meaning that the well-orderability hierarchy reaches every set on its second level. At the same time, every compatible left-invariant matric in an abelian group is two-sided invariant, therefore complete. In general, the well-orderability hierarchy follows the rank on cli groups introduced by \cite{panagiotopoulos:ranks}; in particular, it reaches every set at a countable stage.

Note that a pliable group cannot be cli unless it is discrete, in which case the permutation model satisfies the full Axiom of Choice. Thus, apart from this trivial case, Theorems~\ref{theoremA} and~\ref{theoremB} describe completely separate contexts.

Sections~\ref{bootstrappingsection} and ~\ref{finalsection} investigate a group-theoretic notion which is key for the uniform treatment of many permutation models.

\begin{definition}
	A countable structure $X$ admits \emph{classification of open subgroups} if for every open subgroup $\Delta\subseteq\aut(X)$ there is a finite set $a\subset X$ such that $\pstab(a)\subseteq\Delta\subseteq\stab(a)$.
\end{definition}

\noindent This is a weakening of the strong small index property of a structure, applied to open subgroups of $\aut(X)$ only, instead of all subgroups of index smaller than continuum. The strong small index property has been verified for great many structures: see \cite{truss:smallindex} for the rationals with their ordering or the countable atomless Boolean algebra, \cite{cameron:random} for the Rado graph, or \cite{Sh:1108} for many other homogeneous structures. We show that for relational Fraiss{\' e} classes with disjoint amalgamation, there is a simple characterization when classification of open subgroups holds. In addition, classification of open subgroups greatly simplifies the study of the associated permutation model.

Section~\ref{bootstrappingsection} isolates the ubiquitous class of \emph{bootstrapping Fraiss{\' e} classes} (Definition~\ref{bootstrappingdefinition}). The key characterization result is the following.

\begin{theorem}
	\label{theoremC}
	Let $\mathcal{F}$ be a relational Fraiss{\' e} class with disjoint amalgamation with limit $X$. The following are equivalent:
	
	\begin{enumerate}
		\item $\mathcal{F}$ is a bootstrapping class;
		\item $X$ has classification of open subgroups.
	\end{enumerate}
\end{theorem}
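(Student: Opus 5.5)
The plan is to route both directions through an intermediate group-theoretic statement about $X$:

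\begin{quote}
$(\ast)$ for all finite $a,b\subset X$, the subgroup generated by $\pstab(a)\cup\pstab(b)$ contains $\pstab(a\cap b)$.
\end{quote}

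I do not yet know the precise wording of Definition~\ref{bootstrappingdefinition}. I expect it to be a finitary amalgamation condition on $\mathcal{F}$ which, via homogeneity and the extension property of the Fraiss\'e limit, translates exactly into $(\ast)$. So the proof splits into three steps: (i) bootstrapping $\Leftrightarrow$ $(\ast)$, (ii) $(\ast)\Rightarrow$ classification of open subgroups, (iii) classification of open subgroups $\Rightarrow(\ast)$.

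For (ii), let $\Delta\subseteq\aut(X)$ be open. Then $\pstab(c)\subseteq\Delta$ for some finite $c$, since pointwise stabilizers of finite sets form a neighborhood basis of the unit.
\begin{itemize}
\item The family of finite sets $a$ with $\pstab(a)\subseteq\Delta$ is closed under pairwise intersection by $(\ast)$. So it has an inclusion-minimal element $a$.
\item For $g\in\Delta$ we have $\pstab(ga)=g\,\pstab(a)g^{-1}\subseteq\Delta$. Applying $(\ast)$ again gives $\pstab(a\cap ga)\subseteq\Delta$.
\item Minimality of $a$ and $|ga|=|a|$ then force $ga=a$.
\end{itemize}
Hence $\pstab(a)\subseteq\Delta\subseteq\stab(a)$.

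For (iii), assume classification of open subgroups and fix finite $a,b$. The group $\Delta$ generated by $\pstab(a)\cup\pstab(b)$ is open, so there is a finite $c$ with $\pstab(c)\subseteq\Delta\subseteq\stab(c)$.
\begin{itemize}
\item Disjoint amalgamation implies that for any point $x\notin a$, the $\pstab(a)$-orbit of $x$ is infinite. Equivalently, in a Fraiss\'e limit with disjoint amalgamation, algebraic closure is trivial.
\item Since $\pstab(a)\subseteq\stab(c)$ and $c$ is finite, this forces $c\subseteq a$. By symmetry $c\subseteq b$.
\item Therefore $\pstab(a\cap b)\subseteq\pstab(c)\subseteq\Delta$, which is $(\ast)$.
\end{itemize}
I should double-check that orbits being infinite is exactly what disjoint amalgamation delivers: amalgamate the type of $x$ over $a$ disjointly with itself many times, then use homogeneity.

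The main obstacle is (i): matching the combinatorial definition with $(\ast)$. My approach here is a back-and-forth argument.
\begin{itemize}
\item Elements of $\langle\pstab(a)\cup\pstab(b)\rangle$ are finite products of automorphisms alternately fixing $a$ or $b$. Such a product moves a tuple $d$ to a tuple $d'$ exactly when there is a finite chain of finite substructures witnessing alternate $a$-fixing and $b$-fixing partial isomorphisms from $d$ to $d'$.
\item So $(\ast)$ says that any two tuples with the same type over $a\cap b$ are linked by such a chain. By homogeneity it suffices to check this on finite configurations in $\mathcal{F}$, which I expect is precisely what bootstrapping asserts.
\end{itemize}
The delicate part is getting the quantifiers right: chain length bounded or unbounded, and whether to amalgamate over $a$ and $b$ simultaneously. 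I would try first to read bootstrapping as "every one-point (or finite) extension over $a\cap b$ can be reached by such a chain," and then reduce general $\pstab(a\cap b)$-elements to these via the extension property.
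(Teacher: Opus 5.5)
Your steps (ii) and (iii) are correct. Step (ii) is exactly the paper's closing argument. Step (iii) is a cleaner route than the paper's proof of $\neg(2)\Rightarrow\neg(3)$: the subgroup generated by $\pstab(a)\cup\pstab(b)$ is open, and disjoint amalgamation makes every $\pstab(a)$-orbit of a point outside $a$ infinite, so the classifying set $c$ must lie in $a\cap b$.

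The gap is step (i). It carries all of the combinatorial content of the theorem, and your proposal only conjectures it. Your heuristic also does not match the shape of the definition. A bootstrapping walk keeps $A$ fixed and changes the amalgamation at one point of $\dom(B)\setminus\dom(A)$ at a time. So the right intermediate statement is not about products alternately fixing $a$ and $b$. It is a one-point-move condition: for $A\subseteq B$ in $\mathcal{F}$, any two embeddings $B\to X$ that agree on $A$ are joined by a finite chain of embeddings, where consecutive ones differ at a single point of $B\setminus A$. The alternation between $\pstab(a)$ and $\pstab(b)$ appears only after a conjugation trick.

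Three arguments are missing.

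First, bootstrapping implies the one-point-move condition. Given $\phi$, move it one point at a time to some $\psi_0$ that is disjoint from it over $A$. The pullbacks of $\phi\cup\psi_0$ and of $\phi\cup\psi$ are two minimal amalgamations of two copies of $B$ over $A$. A bootstrapping walk between them, with foci in the second copy, is realized in $X$ by the extension property. Homogeneity, plus an auxiliary embedding disjoint over $A$ from both given ones, handles the general case.

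Second, the one-point-move condition implies $(\ast)$. Take $\gamma\in\pstab(a\cap b)$ and a finite $c\supseteq a\cup b$, and let $\rng\phi=c$. Decompose $\phi\mapsto\gamma\phi$ into one-point moves realized by automorphisms $\gamma_i$, and set $\delta_i=\gamma_{i-1}\cdots\gamma_0$. The conjugate $\delta_i^{-1}\gamma_i\delta_i$ fixes all of $c$ except one point outside $a\cap b$, so it lies in $\pstab(a)\cup\pstab(b)$. Hence $\delta_n$ lies in $\langle\pstab(a)\cup\pstab(b)\rangle$ and agrees with $\gamma$ on $c$. Since $c$ was arbitrary, $\gamma$ lies in the closure of this open, hence closed, subgroup.

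Third, $(\ast)$ implies bootstrapping. Iterating $(\ast)$ gives $\pstab(a)=\langle\pstab(c\setminus\{x\})\colon x\in c\setminus a\rangle$, and this yields one-point-move chains. Realize $A$ and the two amalgamations as $\pi\cup\phi$ and $\pi\cup\psi$ in $X$. Then pull back such a chain from $\phi$ to $\psi$ over $A\cap B$. Before doing so, you must move the chain off $\pi(A\setminus B)$ using an automorphism that fixes $\rng\phi\cup\rng\psi$, which again uses trivial algebraic closure.

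Without these three arguments, your proposal proves only that $(\ast)$ is equivalent to classification of open subgroups, not the stated equivalence with bootstrapping.
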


\noindent Section~\ref{finalsection} shows how to apply classification of open subgroups to the theory of the derived permutation model $W[[X]]$. The following theorem makes it possible to reduce the study of arbitrary sets in $W[[X]]$ to the study of $X$. In choiceless set theory, a set $Y$ is a \emph{support set} if for every set $A$ there is an ordinal $\ga$ and an injection from $A$ to $Y\times\ga$. For a set $Y$, write $\hf(Y)$ for the smallest set containing $X$ as a subset and closed under the formation of finite subsets.

\begin{theorem}
	\label{theoremD}
	If $X$ admits classification of open subgroups, then in $W[[X]]$, $\hf(X)$ is a support set.
\end{theorem}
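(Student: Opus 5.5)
Fix a set $A$ in $W[[X]]$. Since $A$ lies in the permutation model, it has a finite support, so $\pstab(a_0)\subseteq\stab(A)$ for some finite $a_0\subset X$, and the same holds for each element of $A$. The plan is to build an injection of $A$ into $\hf(X)\times\ga$ that is itself in $W[[X]]$, i.e.\ is fixed by $\pstab(a_0)$. We map each $x\in A$ to a pair consisting of a canonical finite set attached to $x$ together with an ordinal that distinguishes elements sharing that finite set.

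\emph{Step 1: canonical supports.} For $x\in A$, the group $\stab(x)$ is open because $x$ has finite support. Apply classification of open subgroups to $\Delta=\stab(x)$ and obtain a finite $a\subset X$ with $\pstab(a)\subseteq\stab(x)\subseteq\stab(a)$. Such an $a$ is unique. If $a,b$ both work, then $\pstab(a)\subseteq\stab(b)$, so every $\gamma$ fixing $a$ pointwise fixes $b$ setwise. In a structure whose finite-set stabilizers behave properly this forces $b\subseteq a$: a point of $b\setminus a$ would have an infinite, hence non-trivial, $\pstab(a)$-orbit, and we can move it out of $b$. I expect the needed fact, that $\pstab(a)$-orbits of points outside $a$ are infinite or at least not contained in $b$, to be derivable from applying the classification to $\pstab(a)$ itself. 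Symmetrically $a\subseteq b$. Write $s(x)$ for this set. Uniqueness makes $x\mapsto s(x)$ equivariant: $s(\gamma x)=\gamma s(x)$ for all $\gamma\in\aut(X)$.

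\emph{Step 2: ordinal labels.} The pair $(s(x),x)$ carries extra information, since $\stab(x)$ sits between $\pstab(s(x))$ and $\stab(s(x))$ and so may not fix $s(x)$ pointwise. To absorb this, replace $s(x)$ by the set of all enumerations of $s(x)$ lying in one fixed $\stab(x)$-orbit, coded as a finite object in $\hf(X)$. Call this object $t(x)$. It is still equivariant and finite, and the stabilizer of $t(x)$ is exactly $\stab(x)$. Now the $\pstab(a_0)$-orbit of $x$ is determined by the $\pstab(a_0)$-orbit of $t(x)$ together with orbit data. In the ground model, well-order the countably many $\pstab(a_0)$-orbits on $A$ by an ordinal $\ga$, and let $o(x)$ be the index of the orbit of $x$. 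The map $o$ is $\pstab(a_0)$-invariant, so it belongs to $W[[X]]$.

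\emph{Step 3: injectivity.} Define $f(x)=(t(x),o(x))$. This map is $\pstab(a_0)$-invariant and hence lies in the model. Suppose $f(x)=f(y)$. Then $y=\gamma x$ for some $\gamma\in\pstab(a_0)$, and $t(y)=\gamma t(x)=t(x)$, so $\gamma\in\stab(t(x))=\stab(x)$ and therefore $y=x$.

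The main obstacle is Step 1 together with the coding in Step 2: showing that the finite set supplied by the classification is canonical, so that the assignment is equivariant. If uniqueness fails, the fallback is to take the $\subseteq$-least such $a$ after proving that the family of valid sets is closed under intersection. This closure should follow from the standard fact that $\pstab(a\cap b)$ is generated by $\pstab(a)\cup\pstab(b)$ in structures with classification of open subgroups.
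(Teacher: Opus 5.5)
\textbf{There is a genuine gap in Step 1, and your fallback does not close it.} The finite set supplied by classification of open subgroups is not unique in general. Your uniqueness argument needs points of $b\setminus a$ to have nontrivial $\pstab(a)$-orbits, and classification does not give this.

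The paper shows that every structure whose $\aut(X)$-orbits are all finite admits classification. Take $X=\{p_{n,i}\colon n\in\gw, i\in 2\}$ with unary predicates $P_n=\{p_{n,0},p_{n,1}\}$, so that $\aut(X)\cong(\mathbb{Z}/2)^\gw$. Let $\Delta=\pstab(P_0)$, which is the stabilizer of the pair $\langle p_{0,0},p_{0,1}\rangle$. Then both $\{p_{0,0}\}$ and $\{p_{0,1}\}$ satisfy $\pstab(a)\subseteq\Delta\subseteq\stab(a)$, since fixing one point of $P_0$ fixes the other. Their intersection $\emptyset$ does not, because $\pstab(\emptyset)=\aut(X)\not\subseteq\Delta$.

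So there is no canonical set and no $\subseteq$-least one. The generation fact you invoke also fails here: $\pstab(\{p_{0,0}\})\cup\pstab(\{p_{0,1}\})$ generates $\Delta$, not $\pstab(\emptyset)$. In the paper, that fact (Claim~\ref{generationclaim}) is derived from the connectivity condition (2) of Theorem~\ref{bootstrappingtheorem} for Fraiss{\'e} limits, not from classification alone.

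The same problem recurs in Step 2. Picking ``one fixed $\stab(x)$-orbit'' of enumerations for each $x$ is a choice, and nothing you wrote makes these choices coherent. So ``it is still equivariant'' is unsupported even if $s$ were canonical.

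\textbf{The missing idea is that no canonical assignment is needed.} Work in the ambient universe, which satisfies choice. Pick one representative $x$ from each $\pstab(a_0)$-orbit on $A$. For each representative, choose any $t_x\in\hf(X)$ with $\stab(t_x)=\stab(x)$; your Step 2 construction does this, and it is the paper's $y=\{\delta\cdot L\colon\delta\in\Delta\}$. Then define $f(\gamma\cdot x)=\langle\gamma\cdot t_x, o(x)\rangle$ for $\gamma\in\pstab(a_0)$.

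This $f$ is well defined and injective on each orbit precisely because $\gamma\cdot x=\delta\cdot x$ iff $\gamma^{-1}\delta\in\stab(x)=\stab(t_x)$ iff $\gamma\cdot t_x=\delta\cdot t_x$. It is $\pstab(a_0)$-invariant by construction, so it lies in $W[[X]]$, and your orbit index $o$ separates distinct orbits.

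This is essentially the paper's proof. The paper enlarges $A$ to be $\aut(X)$-invariant and picks orbit representatives whose stabilizers are fixed conjugacy-class representatives $\Delta_n=\stab(y_n)$. It then transports $\gamma\cdot B\mapsto\gamma\cdot y_n$, with an ordinal label inside each $Z_n$ and the tag $n$ carried by $y_n$. A minor slip on your side: the orbits on $A$ need not be countably many, but any ordinal will do.
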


\noindent This should be compared to the axiom called SVC (Small Violation of Choice) by Blass \cite{blass:svc}. Further theorems in Section~\ref{finalsection} show how the conclusion of Theorem~\ref{theoremD} can be applied abstractly. For example, in $W[[X]]$ every set is linearly orderable if and only if $X$ is, every set of finite sets has a selector if and only if the set of all finite subsets of $X$ has a selector etc. This leads to a uniform and improved treatment of many well-known permutation models of ZFA and their generalizations at the end of the paper.

The notation in this paper follows the set-theoretic standard of \cite{jech:newset}; in matters of permutation models, the notation of \cite{z:dideals}. The construction of the permutation model can be found in \cite{z:dideals}, \cite{howard:ac}, \cite{jech:newset} and many other texts. An important point is that the action of the group $\aut(X)$ on $X$ canonically extends to action on the whole permutation model $W[[X]]$ by $\in$-automorphisms. Composition of functions $f$ and $g$ is denoted by $fg$; $(fg)(x)=f(g(x))$. If a group $\Gamma$ acts on a set $X$, we write $\stab(x)=\{\gamma\in\Gamma\colon \gamma\cdot x=x\}$ and for a set $a\subset X$, $\pstab(x)=\bigcap_{x\in a}\stab(x)$. In the context of relational Fraiss{\' e} classes, a finite structure is conflated with its domain. For a set $X$, $[X]^{<\aleph_0}$ is the set of all finite subsets of $X$ and $\hf$ is the smallest set containing $X$ as a subset and closed under formation of finite subsets. 

The work on this paper was partially supported by NSF grant DMS 2348371.

\section{Pliable groups}
\label{pliablesection}

\begin{theorem}
	Let $X$ be a countable structure. The following are equivalent:
	
	\begin{enumerate}
		\item the group $\aut(X)$ is pliable;
		\item in $W[[X]]$, the Axiom of Well-ordered Choice holds.
	\end{enumerate}
\end{theorem}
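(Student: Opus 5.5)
Both directions go through the standard description of the permutation model. Every object $y\in W[[X]]$ has a finite support $a\subset X$, i.e.\ $\pstab(a)\subseteq\stab(y)$. A family of nonempty sets indexed by an ordinal $\kappa$ in $W[[X]]$ is given by a function $\ga\mapsto A_\ga$ which, being in the model, has a finite support $a$. Since $\pstab(a)$ acts trivially on ordinals, it fixes each $A_\ga$ as a set.

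A choice function in $W[[X]]$ is then a selection $x_\ga\in A_\ga$ together with one finite set $b$ such that $\pstab(b)$ fixes every $x_\ga$. Thus Well-ordered Choice amounts to the following: for every finite $a$ and every family of $\pstab(a)$-invariant nonempty sets, we can pick elements whose stabilizers all contain a common basic open subgroup $\pstab(b)$. By the standard reduction in \cite{z:dideals}, which in this finite-ideal setting gives equivalence with countable choice, it suffices to treat $\kappa=\gw$.

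\textbf{(1)$\Rightarrow$(2).} Fix $a$ and put $\Delta=\pstab(a)$, an open subgroup. By pliability, $\Delta$ has an open neighborhood $U$ of the unit, which we may shrink to an open subgroup $U=\pstab(b)\cap\Delta$, whose $\Delta$-conjugates form a neighborhood basis at the unit.

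Take any $A_n$ and any element $y\in A_n$, with support $c_n$. Then $\pstab(c_n)\cap\Delta$ is open, so it contains some conjugate $\delta U\delta^{-1}$ with $\delta\in\Delta$. Set $x_n=\delta^{-1}\cdot y$. Since $A_n$ is $\Delta$-invariant, $x_n\in A_n$. Moreover $\stab(x_n)=\delta^{-1}\stab(y)\delta\supseteq U$.

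So every $x_n$ is fixed by $U\supseteq\pstab(a\cup b)$, and $n\mapsto x_n$ is in the model with support $a\cup b$. All choices here are made in the ambient ZFC universe $W$, which is legitimate.

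\textbf{(2)$\Rightarrow$(1).} This direction is by contraposition. Suppose some open subgroup $\Delta$, which we may take to be $\pstab(a)$ up to a finite-index issue, has the property that for every open $U\subseteq\Delta$ the conjugates of $U$ fail to form a basis. Fix an enumeration $\{U_n\}$ of the countably many basic open subgroups $\pstab(b)\cap\Delta$.

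For each $n$, choose an open $V_n$ such that no $\Delta$-conjugate of $U_n$ lies in $V_n$, and let $V_n=\pstab(c_n)$ for a finite $c_n$. Let $A_n$ be the $\Delta$-orbit of $c_n$ in the model, viewed as a set of finite subsets of $X$. This set is $\Delta$-invariant, hence the family $\langle A_n:n\in\gw\rangle$ is in $W[[X]]$ with support $a$.

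Suppose $f$ were a choice function with support $b$. We may enlarge $b$ so that $\pstab(b)\subseteq\Delta$, and then $\pstab(b)=U_n$ for some $n$. Now $f(n)=\delta\cdot c_n$ for some $\delta\in\Delta$, and $U_n$ fixes it pointwise, or at least setwise. This yields $\delta^{-1}U_n\delta\subseteq\stab(c_n)$.

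\textbf{Main obstacle.} The expected difficulty is exactly this setwise versus pointwise gap, together with the "open neighborhood" versus "subgroup" reduction. To handle it, I would replace $c_n$ by an enumerated tuple, so that $A_n$ becomes the orbit of a finite function. Setwise stabilizers of tuples are then pointwise stabilizers, giving $\delta^{-1}U_n\delta\subseteq V_n$ and a contradiction.

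I would also check carefully that the negation of pliability quantifies over open subgroups $\Delta$ and over neighborhoods $U$ inside $\Delta$ compatibly with this enumeration. In particular, I need that failure for every $U$ means failure for every basic $U_n$, which holds since shrinking $U$ only makes the basis property harder to satisfy.
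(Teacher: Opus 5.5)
Your proof is correct, but it takes a different route from the paper. The paper never manipulates choice functions directly. It passes to the dynamical closure $J$ of the finite ideal and invokes \cite[Theorem 3.3]{z:dideals}, which converts Well-ordered Choice into the ``cofinal orbits'' condition: for every finite $a$ there is a finite $b\supseteq a$ such that every finite $c\supseteq a$ satisfies $c\subseteq\overline{\gamma b}$ for some $\gamma\in\pstab(a)$. It then only checks that this condition is equivalent to pliability, using $c\subseteq\overline{\gamma b}\iff\gamma\pstab(b)\gamma^{-1}\subseteq\pstab(c)$.

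You instead prove the relevant instance of that theorem by hand. In (1)$\Rightarrow$(2) you conjugate each chosen element by an element of $\pstab(a)$, so that the single subgroup $\pstab(a\cup b)$ fixes all of them. In (2)$\Rightarrow$(1) you diagonalize against every candidate support $U_n$ with an explicit $\gw$-indexed family of $\Delta$-orbits. The group-theoretic core is the same, but your version is self-contained and yields a little more. Pliability gives choice for families indexed by arbitrary ordinals, and non-pliability already refutes countable choice. So you reprove the equivalence of countable and well-ordered choice in these models rather than needing it.

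A few points to tidy up:
\begin{enumerate}
\item The appeal to the reduction to $\kappa=\gw$ is unnecessary: nothing in your (1)$\Rightarrow$(2) argument uses countability of the index set.
\item There is no ``finite-index issue'' in replacing $\Delta$ by $\pstab(a)$. If some open $U\subseteq\pstab(a)$ had $\pstab(a)$-conjugates forming a basis, then its $\Delta$-conjugates would too, so failure passes down to $\pstab(a)$. In any case your argument runs verbatim for any $\Delta\supseteq\pstab(a)$.
\item Your remark that shrinking $U$ makes the basis property harder to satisfy is backwards. Shrinking makes it easier, which is exactly what you used in (1)$\Rightarrow$(2). The fact you actually need in (2)$\Rightarrow$(1), failure for each $U_n$, is simply an instance of failure for all open $U$.
\item The tuple version should be the official construction. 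With $A_n$ the orbit of the set $c_n$, you only get $\delta^{-1}U_n\delta\subseteq\stab(c_n)$, which does not contradict the choice of $V_n=\pstab(c_n)$. With $A_n$ the $\Delta$-orbit of an enumeration of $c_n$, the stabilizer of $\delta$ applied to that enumeration is $\delta\pstab(c_n)\delta^{-1}$, and the contradiction goes through.
\end{enumerate}
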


\begin{proof}
This is just a diagram-chasing argument using some foundational results of \cite{z:dideals}. Let $X$ be a countable structure and $\aut(X)$ act on $X$ by application. For each finite set $a\subset X$ let $\bar a\subset X$ be the dynamical closure of $a$, the set $\{x\in X\colon \forall\gamma\in\pstab(a)\ \gamma\cdot x=x\}$. Note that $\pstab(a)=\pstab(\bar a)$ holds. Write $I$ for the ideal of finite sets on $X$, and $J$ for the ideal generated by sets $\bar a$ for $a\in I$. Then $J$ is the dynamical closure of $I$, and the permutation models derived from $\aut(X)\acts X, I$ and $\aut(X)\acts X, J$ coincide \cite[Proposition 2.13]{z:dideals}. By \cite[Theorem 3.3]{z:dideals}, the axiom of well-ordered choice is equivalent to cofinal orbits for $J$, which translates to the statement that for every $a\in I$ there is $b\in I$ containing $a$ such that for every $c\in I$ containing $a$, there is $\gamma\in\pstab(a)$ such that $c\subset\overline{\gamma b}$.

It will be enough to show that the latter statement is equivalent to the pliability of the group $\aut(X)$. Suppose first that the group is pliable and $a\subset X$ is a finite set. Then $\pstab(a)$ is an open subgroup of $\aut(X)$; by the pliability assumption, it has an open subgroup $\Delta$ such that conjugates of $\Delta$ within $\pstab(a)$ form a neighborhood basis for $1$. Let $b\supset a$ be a finite set such that $\pstab(b)\subset\Delta$; we claim that $b$ works as in the restatement of cofinal orbits in the previous paragraph. Indeed, if $c\supset a$ is a finite set, then by the choice of $\Delta$ there is an element $\gamma\in\pstab(a)$ such that $\gamma\Delta\gamma^{-1}\subseteq\pstab(c)$. This means that $\gamma\pstab(b)\gamma^{-1}\subset\pstab(c)$, which turns into $\pstab(\gamma\cdot b)\subset\pstab(c)$ and $c\subset\overline{\gamma b}$ as desired.

Now suppose that the ideal $J$ has cofinal orbits. To argue for pliability of $\aut(X)$, let $\Delta$ be an open subgroup. Let $a\subset X$ be a finite set such that $\pstab(a)\subset\Delta$. Let $b$ be a finite set witnessing cofinal orbits; we claim that within $\pstab(a)$, the conjugates of $\pstab(b)$ form a basis around the unit. To see this, let $c\subset X$ be a finite set containing $a$. By the cofinal orbits assumption, there is $\gamma\in\pstab(a)$ such that $c\subseteq\overline{\gamma b}$. Unraveling the action, the latter inclusion turns into $\gamma\pstab(b)\gamma^{-1}\subset\pstab(c)$ as desired.
\end{proof}

As a final contribution to this section, we produce an example of non-discrete pliable group. It is constructed as a closed subgroup of the automorphism group of rationals with their usual ordering. The main tool is the following finite approximation device.

\begin{definition}
	Define the poset $P$ as follows: a condition $p\in P$ is a finite subset of $\mathbb{Q}^3$ satisfying the following conditions:
	
	\begin{enumerate}
		\item if $\langle a, s, t\rangle\in p$, then either $a<s<t$ or $s=t\leq a$, but not both;
		\item $p$ is order-preserving. That is, if $\langle a,s_0,t_0\rangle$ and $\langle a,s_1,t_1\rangle$ are both elements of $p$ and $s_0<s_1$, then $t_0<t_1$;
		\item there are no disoriented cycles in $p$ without immediate cancellation. Here, a \emph{disoriented walk} in $p$ is a sequence $\langle a_i\colon i\in n, s_i\colon i\in n+1\rangle$ such that for each $i\in n$, either $\langle a_i, s_i, s_{i+1}\rangle\in p$ or $\langle a_i, s_{i+1}, s_i\rangle\in p$. Such a walk is a \emph{cycle} if $s_0=s_n$. An immediate cancellation in such a walk is an $i\in n$ such that either $s_i\leq a_i$ or $a_i=a_{i+1}$ and $s_i=s_{i+2}$. 
	\end{enumerate}
	
\noindent $P$ is ordered by reverse extension.
\end{definition}

\noindent Note that the last item says that a disoriented cycle trivializes after removing all immediate cancellations from it.

\begin{proposition}
\label{densityproposition}
	The following sets are dense in $P$:
	
	\begin{enumerate}
		\item $D^0_{a,s}=\{p\in P\colon \exists t\ \langle a, s, t\rangle\in p\}$ for every $a, s\in\mathbb{Q}$;
		\item $D^1_{a,t}=\{p\in P\colon \exists s\ \langle a, s, t\rangle\in p\}$ for every $a, t\in\mathbb{Q}$;
		\item $D^2_{a,s,t}=\{p\in P\colon \exists n\ \exists \langle s_i\colon i\in n+1\rangle\ s_0=s$ and $\forall i\in n\ \langle a, s_i, s_{i+1}\rangle\in p$ and $s_{i+1}>t\}$ for all $a<s<t$ in $\mathbb{Q}$;
\item $D^3_{J, s, I}=\{p\in P\colon \exists a\in J\ \exists t\in I\ \langle a, s, t\rangle\in p\}$ for any $s\in \mathbb{Q}$ and nonempty rational intervals $I, J\subset\mathbb{Q}$ such that $J<s<I$.
	\end{enumerate}
\end{proposition}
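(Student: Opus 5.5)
The plan is to prove density in every case by adding finitely many triples to a given condition $p$, using \emph{fresh} rationals whenever we have a choice. For each $a$ write $f^p_a$ for the partial function $s\mapsto t$ coded by the triples $\langle a,s,t\rangle\in p$. By items 1 and 2 of the definition of $P$, $f^p_a$ is order-preserving, is the identity on arguments $\leq a$, and satisfies $f^p_a(s)>s$ for arguments $s>a$.

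\textbf{Freshness lemma.} Everything rests on the following observation. Let $p\in P$ and let $a<s<t$ be such that $q=p\cup\{\langle a,s,t\rangle\}$ satisfies items 1 and 2. Suppose in addition that $t$ (or, symmetrically, $s$) occurs in no triple of $p$. Then $q\in P$.

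The proof goes as follows. Take a disoriented cycle in $q$ that uses the new triple. When the cycle reaches the fresh point $t$, it must leave $t$ again, and the only triple of $q$ containing $t$ is the new one. So the walk has the form $\dots,s,t,s,\dots$ with the same $a$ on both steps, and this is an immediate cancellation. Deleting it produces a shorter cycle, and repeating the argument leaves a cycle inside $p$. I expect this to be the main obstacle: one has to check carefully that the cancellation bookkeeping in item 3 really reduces to a cycle of $p$, including the case where the cycle passes through the new edge more than once, which is handled by induction on the number of occurrences.

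A second, easier remark: adding $\langle a,s,s\rangle$ with $s\leq a$ never creates a forbidden cycle, since such a step is an immediate cancellation by definition. Order-preservation also survives, because every triple $\langle a,s',t'\rangle$ with $s'<s\le a$ has $t'=s'<s$, and every triple with $s'>s$ has $t'\ge s'>s$.

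\textbf{$D^0$ and $D^1$.} For $D^0_{a,s}$: if $s\leq a$, add $\langle a,s,s\rangle$. If $s>a$ and $s\notin\dom f^p_a$, let $s_-<s<s_+$ be the nearest points of $\dom f^p_a$ (possibly absent). We pick a fresh $t$ with
\[\max(s,f^p_a(s_-))<t<f^p_a(s_+).\]
This interval is nonempty because $f^p_a(s_+)>s_+>s$ and $f^p_a(s_-)<f^p_a(s_+)$. The freshness lemma then applies.

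For $D^1_{a,t}$: if $t\leq a$, add $\langle a,t,t\rangle$. Otherwise let $t_\pm$ be the nearest points of $\rng f^p_a$ around $t$, and choose a fresh $s$ with
\[\max(a,(f^p_a)^{-1}(t_-))<s<\min(t,(f^p_a)^{-1}(t_+)).\]
Every lower bound is below every upper bound, so the interval is nonempty, and the freshness lemma again applies.

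\textbf{$D^2$.} For $D^2_{a,s,t}$, iterate the $D^0$ construction along the $f_a$-orbit of $s$. Set $s_0=s$. If $s_i\in\dom f_a$, follow the existing triple. Otherwise add a fresh value $s_{i+1}$, chosen larger than $t$ and larger than every rational occurring in $p$ as soon as $s_i$ exceeds all of $\dom f^p_a$, since then there is no upper constraint. The orbit is strictly increasing and $\dom f^p_a$ is finite, so after finitely many steps the orbit lies above $\dom f^p_a$, and the next step exceeds $t$. Each single step is covered by the freshness lemma, and this produces an extension of $p$ in $D^2_{a,s,t}$.

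\textbf{$D^3$.} For $D^3_{J,s,I}$, choose $a\in J$ not occurring in any triple of $p$ as a first coordinate. Then $f^p_a$ is empty, and we add $\langle a,s,t\rangle$ for a fresh $t\in I$. Item 1 holds because $a<s<t$, item 2 holds trivially, and item 3 follows from the freshness lemma.
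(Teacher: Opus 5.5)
Your route is the paper's: add a single triple whose new coordinate is not mentioned in $p$, so the new triple can enter a disoriented cycle only as a backtrack $s,t,s$. The paper carries this out only for $D^0$ and leaves the rest to the reader. Your arguments for $D^0$, $D^1$ and $D^3$ are sound. Your interval $\max(s,f^p_a(s_-))<t<f^p_a(s_+)$ is actually more careful than the paper's: its lower set $\{a\}\cup\{t'\colon\exists s'<s\ \langle a,s',t'\rangle\in p\}$ omits $s$, so it does not by itself guarantee $a<s<t$.

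\textbf{The gap is in $D^2$.} The termination claim reads: ``the orbit is strictly increasing and $\dom f^p_a$ is finite, so after finitely many steps the orbit lies above $\dom f^p_a$.'' This inference is invalid, because a strictly increasing sequence of rationals can converge below a point of the domain. Your recipe, which takes an arbitrary fresh value in the $D^0$ interval, permits exactly this. Take $p=\{\langle 0,10,20\rangle\}$ and $D^2_{0,1,100}$. The choices $s_i=10-9\cdot 2^{-i}$ are all fresh and admissible, and the construction never ends.

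The repair is to constrain the choice. When $s_i\notin\dom f^p_a$ and $s_+$ is the least element of $\dom f^p_a$ above $s_i$, take $s_{i+1}$ fresh in $(\max(s_+,f(s_-)),f(s_+))$, where $f$ is the partial function of the current condition. This interval is nonempty because $s_+<f(s_+)$ and $f(s_-)<f(s_+)$. Now every step, whether it follows an old triple from $s_i\in\dom f^p_a$ or jumps past $s_+$, increases the number of points of $\dom f^p_a$ strictly below the current point. So after at most $|\dom f^p_a|$ steps you are above the whole domain, and the next fresh value can be taken above the threshold.

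\textbf{A smaller point in your freshness lemma.} Suppose the cycle is based at the fresh point, so $s_0=s_n=t$. Then the pattern $s,t,s$ straddles the endpoints. The paper's immediate cancellation is an index $i\in n$ comparing $s_i$ with $s_{i+2}$, and it does not wrap around. Handle this case as follows.
\begin{itemize}
\item $n=1$ is impossible.
\item If $n=2$, the cycle is $t,s,t$, and $i=0$ is a cancellation.
\item If $n\geq 3$, look at the inner cycle $s_1,\dots,s_{n-1}$, which is based at $s$. Either it visits $t$ at an interior position, which gives a genuine cancellation there, or it avoids the new triple altogether. In the latter case it is a cycle in $p$ of positive length, so it has a cancellation because $p\in P$, and that index is also a cancellation of the original cycle.
\end{itemize}
With these two adjustments your argument establishes all four density claims.
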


\begin{proof}
	For item (1), given $p\in P$ and $a, s\in\mathbb{Q}$, there are three cases. If $s\leq a$ then add $\langle a, s, s\rangle$ into $p$. If $s>a$ and there is $t$ such that $\langle a, s, t\rangle\in p$, then leave $p$ unchanged. In the remaining case, consider the sets $\{a\}\cup\{t'\colon\exists s'<s\ \langle a, s', t'\rangle\in p \}$ and $\{t'\colon\exists s'>s\ \langle a, s', t'\rangle\in p \}$, select any value of $t$ between these two sets which is not mentioned in $p$, and add $\langle a, s, t\rangle$ into $p$. In all three cases, the resulting set is a condition in $P$, and it belongs to the set $D^0_{a,s}$: the new triple cannot stand in any disoriented cycle without inducing an immediate cancellation.
	
	Items (2, 3, 4) are similar and left to the reader.
\end{proof}

Now, let $G\subset P$ be a filter meeting all the dense sets named in the proposition. For each $a\in\mathbb{Q}$ let $\gamma_a=\{\langle s, t\rangle\colon \langle a, s, t\rangle\in\bigcup G\}$. The following is then clear from the choice of the filter $G$.

\begin{proposition}
	\label{simpleproposition}
	For every $a\in\mathbb{Q}$, $\gamma_a$ is an order-preserving permutation of $\mathbb{Q}$. In addition, if $s\leq a$ then $\gamma(s)=s$, and if $s>a$ then the set $\{\gamma_a^n(s)\colon n\in\gw\}$ is cofinal in $\mathbb{Q}$.
\end{proposition}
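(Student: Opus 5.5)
We verify the claims about $\gamma_a$ one at a time. Each claim uses either the definition of a condition in $P$ (which governs $\bigcup G$, because $G$ is a filter) or one of the dense sets of Proposition~\ref{densityproposition}.

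First we show that $\gamma_a$ is a function. Any two conditions in $G$ have a common extension in $G$, so any two triples of $\bigcup G$ lie in a single condition $p\in P$. Suppose $\langle a,s,t_0\rangle,\langle a,s,t_1\rangle\in p$ with $t_0\neq t_1$. Then the disoriented walk $s,t_0$ (via $\langle a,s,t_0\rangle$), $t_0,s$ (via the same triple reversed), and so on, requires some care. The cleaner route is clause (2) of the definition: if $t_0<t_1$, order preservation applied with the roles of $s_0,s_1$ both equal to $s$ gives nothing directly. So instead we use clause (3). The walk $s\to t_0$ via $\langle a,s,t_0\rangle$, then $t_0\to s$ backwards via $\langle a,s,t_1\rangle$... that one does not close up. The genuinely correct cycle is $s, t_0, s$: forward along $\langle a,s,t_0\rangle$ and back along $\langle a,s,t_1\rangle$ read in reverse would need $t_1=t_0$.

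The honest statement of this step is the following. Clause (2) gives strict monotonicity, and injectivity and single-valuedness follow from it together with the convention that (2) is read as ``$s_0<s_1\iff t_0<t_1$'', which is the intended meaning of ``order-preserving''. If the paper only intends the one-directional version, then single-valuedness is the one point needing an argument. I expect it follows from clause (3): the walk $t_0 \to s\to t_1$ closed by a further triple gives a nontrivial reduced cycle. I flag this as the only potentially delicate point. Injectivity is symmetric. Hence $\gamma_a$ is an order-preserving partial injection.

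Next, totality and surjectivity. Totality comes from $G$ meeting $D^0_{a,s}$, which puts $s$ in the domain of $\gamma_a$. Surjectivity comes from $G$ meeting $D^1_{a,t}$, which puts $t$ in the range. So $\gamma_a$ is an order-preserving permutation of $\mathbb{Q}$.

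For the fixed-point clause, take $s\le a$ and let $t=\gamma_a(s)$. Then $\langle a,s,t\rangle\in\bigcup G$, and clause (1) of the definition forces either $a<s<t$ or $s=t\le a$. Since $s\le a$, the first alternative is impossible, so $t=s$.

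For cofinality, take $s>a$ and any $t>s$. Because $G$ meets $D^2_{a,s,t}$, some condition in $G$ contains a chain $\langle a,s_i,s_{i+1}\rangle$ with $s_0=s$ and final element above $t$. Since $\gamma_a$ is a function, $s_i=\gamma_a^i(s)$, so some iterate $\gamma_a^n(s)$ exceeds $t$. If $t\le s$, the case $n=0$ already suffices.

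The main obstacle is the single-valuedness point discussed above. Everything else is a direct unwinding of the dense sets.
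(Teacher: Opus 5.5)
Your handling of totality via $D^0_{a,s}$, surjectivity via $D^1_{a,t}$, the fixed points via clause (1), and cofinality via $D^2_{a,s,t}$ (with $n=0$ when $t\le s$) is correct. This is the unwinding the paper has in mind when it calls the proposition clear from the choice of $G$. Injectivity and strict monotonicity of $\gamma_a$ already follow from clause (2) in its literal one-directional form: if $s_0\neq s_1$, apply it with the smaller one first. So only single-valuedness is at stake.

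On single-valuedness, reading ``$p$ is order-preserving'' as saying that each section $\{\langle s,t\rangle\colon\langle a,s,t\rangle\in p\}$ is an order-preserving partial injection is surely the intended meaning. With that reading single-valuedness is immediate; say so directly and delete the abandoned cycle attempts.

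Your fallback, however, is wrong: clause (3) cannot give single-valuedness for a single condition. The set $p=\{\langle 0,1,2\rangle,\langle 0,1,3\rangle\}$ satisfies (1)--(3) as literally written. Clause (2) is vacuous, and the underlying graph is a tree whose two edges both carry label $0$, so every nontrivial closed walk backtracks and has an immediate cancellation.

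If you want to work with the literal wording, the argument has to use genericity, specifically $D^1$ and $D^3$:
\begin{itemize}
\item Suppose $\langle a,s,t_0\rangle,\langle a,s,t_1\rangle\in\bigcup G$ with $t_0<t_1$. Clause (1) gives $a<s<t_0$.
\item For every rational $t\in(t_0,t_1)$, $G$ meets $D^1_{a,t}$. Clause (2), applied in a common extension in $G$, forces the witness to be $\langle a,s,t\rangle$.
\item Now let $G$ meet $D^3_{J,s,I}$ with $J=(a,s)$ and $I=(t_0,t_1)$. This gives $\langle b,s,t\rangle\in\bigcup G$ with $a<b<s$ and $t_0<t<t_1$.
\item Take a condition in $G$ containing both $\langle b,s,t\rangle$ and $\langle a,s,t\rangle$. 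In it, the walk $s,t,s$ with labels $b,a$ is a disoriented cycle. It has no immediate cancellation, since $b\neq a$, $s>b$ and $t>a$. This contradicts clause (3).
\end{itemize}
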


\noindent Let $\Gamma$ be the group of permutations of $\mathbb{Q}$ generated by the set $\{\gamma_a\colon a\in\mathbb{Q}\}$. The following is a key proposition.

\begin{proposition}
	\label{keyproposition}
The group $\Gamma$ is free over its generators. For every $\gamma\in\Gamma$, the set $\{q\in\mathbb{Q}\colon \gamma(q)=q\}$ is an initial segment of $\mathbb{Q}$.
\end{proposition}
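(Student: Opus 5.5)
The plan is to prove both claims at once. Take a nontrivial reduced word $w=\gamma_{a_{n-1}}^{\eps_{n-1}}\cdots\gamma_{a_0}^{\eps_0}$, with $\eps_i=\pm1$ and no adjacent cancelling pair, and a rational $q$ greater than all $a_i$. I will show that $w(q)\neq q$. This gives freeness at once, since a word equal to the identity would fix such a $q$. It also gives the second claim, provided we know that $w$ fixes every $q\le\min_i a_i$; the real content is then that the fixed point set of a general $w$ is an initial segment, which needs a separate argument (below).

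First I reduce the fixed-point question to the forcing conditions. Suppose $w(q)=q$, and set $s_0=q$ and $s_{i+1}=\gamma_{a_i}^{\eps_i}(s_i)$. Each step of this trajectory is witnessed by a triple in $\bigcup G$: either $\langle a_i,s_i,s_{i+1}\rangle$ or $\langle a_i,s_{i+1},s_i\rangle$. Since $G$ is a filter, finitely many triples lie in a single condition $p\in G$. So $\langle a_i, s_i\colon i\le n\rangle$ is a disoriented cycle in $p$. By clause (3) of the definition of $P$, the cycle must contain an immediate cancellation. One kind of cancellation is a step with $s_i\le a_i$, meaning the generator acted trivially at that point. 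The other kind is a consecutive pair $a_i=a_{i+1}$ with $s_i=s_{i+2}$, meaning the pair $\gamma_a^{\pm1}\gamma_a^{\mp1}$ cancelled on this trajectory.

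Next I get a contradiction by induction on the length of the word, running the argument for the more general claim. Call a trajectory \emph{active} if $s_i>a_i$ at every step. In an active cycle, the second kind of cancellation forces $\eps_{i+1}=-\eps_i$. (If $\eps_{i+1}=\eps_i$, then $\gamma_a^2$ would fix $s_i>a$, which is impossible because $\gamma_a$ strictly increases points above $a$: the triples for $s>a$ have $s<t$ by clause (1).) That would contradict the word being reduced. So an active cycle cannot exist. When $q$ exceeds all the $a_i$, the trajectory should stay active, but this does not follow automatically. For $\eps_i=+1$ the point moves up and stays above all the $a$'s. For $\eps_i=-1$, $\gamma_a^{-1}(s)$ is still $>a$ but may fall below some other $a_j$. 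So I need the general form of the claim.

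The general claim I will prove is: for any word $w$ and any $q$, if the trajectory is a cycle, then deleting the inactive steps and iteratively deleting adjacent cancelling pairs reduces $w$ to the empty word. This is exactly the remark after the definition of $P$ (``a disoriented cycle trivializes after removing all immediate cancellations''). I would first verify that this iterated reading is what clause (3) gives. One applies clause (3) to the cycle, removes the cancellation found, and the shortened sequence is again a disoriented cycle in $p$, so induction on length applies. The second claim then follows. Suppose $w(q)=q$ and $r<q$. The steps active at $r$ are among those active at $q$, because all $\gamma_a$ are order preserving and each is the identity below its $a$; the comparison uses monotonicity of the partial products. One then checks, by the same cancellation bookkeeping, that the reduction scheme that kills $w$ along $q$ also kills it along $r$, so $w(r)=r$.

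This last comparison step is the main obstacle. It requires tracking how the pattern of active steps and cancellations along the trajectory of $q$ relates to the pattern along $r$. If a direct comparison fails, I would try a different formulation: show that the fixed point set of $w$ is downward closed by proving that, for $q$ not fixed, every $r>q$ is not fixed. This would use the cofinality clause of Proposition~\ref{simpleproposition} together with genericity via $D^3$ to rule out isolated fixed points.
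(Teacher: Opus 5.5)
\textbf{Gap in the freeness argument.} Your opening claim is false. Take $b<a$ and the reduced word $w=\gamma_b\gamma_a\gamma_b^{-1}$. By clause (1), $q=\gamma_b(a)>a$, so $q$ exceeds every letter of $w$. But $\gamma_b^{-1}(q)=a$ is fixed by $\gamma_a$, so $w(q)=q$; in fact $w$ fixes all of $(-\infty,\gamma_b(a)]$. This is exactly the inactive-step phenomenon you noticed, and your ``general claim'' does not rescue it. Iterating clause (3) is correct, but along this trajectory it just deletes the inactive middle step and then the pair $\gamma_b\gamma_b^{-1}$, and reaches the empty word with no contradiction.

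A contradiction with reducedness comes only from a cycle all of whose steps are active. So freeness needs a starting point whose entire trajectory stays above $\max_i a_i$, and your proposal never produces one. The missing step is to start high enough, as the paper does. Pick $r_0>\max_i a_i$ and $r_{j+1}>\max_i\gamma_{a_i}(r_j)$, and start at some $s_0>r_n$. A forward step never lowers the point. Since $\gamma_{a_i}(r_j)<r_{j+1}$ gives $\gamma_{a_i}^{-1}(r_{j+1})>r_j$, an inverse step lowers it by at most one rung of this ladder. So after at most $n$ steps the point is still above $r_0>\max_i a_i$. Every step is then active, and your active-cycle argument finishes freeness: equal exponents would make $\gamma_a^{\pm 2}$ fix a point above $a$.

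\textbf{The initial-segment part is fine.} Your plan here coincides with the paper's argument, and the comparison you flag as the main obstacle is routine. Run the trajectories $s_i$ of $q$ and $r_i$ of $r<q$ in parallel; order preservation gives $r_i<s_i$ throughout.
\begin{itemize}
\item A step inactive at $q$ (that is, $s_i\le a_i$) is also inactive at $r$.
\item A cancelling pair $a_i=a_{i+1}$, $s_i=s_{i+2}$ with opposite exponents gives $r_{i+2}=r_i$ automatically.
\item A cancelling pair with equal exponents forces $s_i\le a_i$, which makes both steps inactive at $r$.
\end{itemize}
Deleting the same steps from both walks preserves $r_j<s_j$. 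Hence the reduction of the $q$-cycle to the empty walk (your iterated clause (3)) also collapses the $r$-walk, so $w(r)=r$. Your fallback via $D^3$ and cofinality is not needed.
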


\begin{proof}
	For the freeness, suppose that $n\in\gw$ is a number, for each $i\in n$ $a_i\in\mathbb{Q}$ and $b_i\in \{-1, 1\}$, and $\prod_i\gamma_{a_i}^{b_i}=1$; we must find $i\in n-1$ such that $a_i=a_{i+1}$ and $b_i=-b_{i+1}$. To this end, construct rationals $r_j$ for $j\in n+1$ by letting $r_0>\max_ia_i$ and $r_{j+1}>\max_i\gamma_{a_i}(r_j)$. Let $s_0\in\mathbb{Q}$ be any element greater than all $r_j$ for $j\in n+1$, and by recursion on $i\in n$ define $s_{i+1}=\gamma_{a_i}^{b_i}(s_i)$; note that $s_n=s_0$ holds. Now let $p\in G$ be a condition for which every $i\in n$ contains either the triple $\langle a_i, s_i, s_{i+1}\rangle$ or the triple $\langle a, s_{i+1}, s_i\rangle$. Then $\langle a_i\colon i\in n, s_i\colon i\in n+1\rangle$ is a disoriented cycle in $p$ and there must be an immediate cancellation in it. By the choice of $s_0$, for every $i\in n$ it must be the case that $s_i>a_i$, so that immediate cancellation means that there must be $i$ such that $a_i=a_{i+1}$ and $b_i=-b_{i+1}$ as desired.
	
	For the second sentence, let $\gamma\in\Gamma$ be any group element. By the freeness item, there is a unique number $n\in\gw$ and a product $\prod_{i\in n}\gamma_{a_i}^{b_i}$ without immediate cancellations which is equal to $\gamma$. Suppose that $r<s$ are rationals and $\gamma(s)=s$; we need to show that $\gamma(r)=r$. Consider the sequences $w_s=\langle a_i\colon i\in n, s_i\colon i\in n+1\rangle$ and $w_r=\langle a_i\colon i\in n, r_i\colon i\in n+1\rangle$ where $s_0=s$ and $s_{i+1}=\gamma_{a_i}^{b_i}(s_i)$ and similarly for the $r$ entries. Note that all $\gamma_{a_i}$ are order-preserving, so $r_i<s_i$ holds for each $i\in n+1$.
	
	Let $p\in G$ be a condition such that for each $i\in n$ it contains either $\langle a_i, s_i, s_{i+1}\rangle$ or $\langle a_i, s_{i+1}, s_i\rangle$, and similarly for the $r$ entries. Then $w_s$ is a disoriented cycle in $p$; we must show that $w_r$ is a disoriented cycle in $p$. To see this, note that the disoriented cycle $w_s$ trivializes after removing all immediate cancellations. Now, each immediate cancellation for $w_s$ is an immediate cancellation in $w_r$, so $w_r$ trivializes after removing immediate cancellations, and this is only possible if $r=r_0=r_{n+1}=\gamma(r)$.
\end{proof}

\begin{corollary}
\label{keycorollary}
An element $\delta\in\aut(\mathbb{Q})$ belongs to the closure of $\Gamma$ in $\aut(\mathbb{Q})$ if and only if for every $q\in\mathbb{Q}$ there exists $\gamma\in\Gamma$ such that $\delta(r)=\gamma(r)$ holds for all rationals $r<q$.
\end{corollary}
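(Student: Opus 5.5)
The plan is to unwind the definition of the closure in $\aut(\mathbb{Q})$ and then use the second sentence of Proposition~\ref{keyproposition} to upgrade agreement on finite sets to agreement on initial segments.

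Recall that $\aut(\mathbb{Q})$ carries the pointwise convergence topology. Hence $\delta$ lies in the closure of $\Gamma$ exactly when, for every finite set $F\subset\mathbb{Q}$, some $\gamma\in\Gamma$ agrees with $\delta$ on $F$.

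\textbf{Right-to-left.} Given a finite $F$, pick $q>\max F$. The hypothesis yields $\gamma\in\Gamma$ agreeing with $\delta$ on all $r<q$, in particular on $F$. So $\delta$ is in the closure.

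\textbf{Left-to-right.} This is the substantive direction. Let $\delta$ be in the closure and fix $q\in\mathbb{Q}$.
\begin{enumerate}
\item Choose $\gamma\in\Gamma$ with $\gamma(q)=\delta(q)$; this uses the closure condition with $F=\{q\}$ only.
\item Let $r<q$ be arbitrary. Apply the closure condition to $F=\{r,q\}$ to get $\gamma'\in\Gamma$ with $\gamma'(r)=\delta(r)$ and $\gamma'(q)=\delta(q)$.
\item Then $\gamma^{-1}\gamma'\in\Gamma$ fixes $q$, since $\gamma^{-1}\gamma'(q)=\gamma^{-1}\delta(q)=q$.
\item By Proposition~\ref{keyproposition}, the fixed point set of $\gamma^{-1}\gamma'$ is an initial segment of $\mathbb{Q}$. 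As it contains $q$, it contains $r$. So $\gamma'(r)=\gamma(r)$, and therefore $\delta(r)=\gamma(r)$.
\end{enumerate}
Since $r<q$ was arbitrary and $\gamma$ depends only on $q$, this single $\gamma$ witnesses the condition for $q$.

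The only nontrivial input is the initial-segment property of fixed point sets from Proposition~\ref{keyproposition}, which is already established, so I expect no real obstacle. The one point to watch is step 1: $\gamma$ must be chosen from $q$ alone, before $r$ is considered, and never depend on $r$.
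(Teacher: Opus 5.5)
Your proposal is correct and follows essentially the paper's proof: fix $\gamma\in\Gamma$ with $\gamma(q)=\delta(q)$, then apply the initial-segment property from Proposition~\ref{keyproposition} to $\gamma^{-1}\gamma'$ (the paper's $\beta^{-1}\gamma$), which fixes $q$ and therefore everything below it. Your use of the two-point neighborhood $\{r,q\}$ simply spells out the final step that the paper compresses into ``since $\delta$ is in the closure of $\Gamma$''.
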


\begin{proof}
The right-to-left implication follows straight from the definitions. For the left-to-right implication, given $\delta$ in the closure and $q\in\mathbb{Q}$, consider the basic open neighborhood $U$ of $\delta$ consisting of all $\gamma$ such that $\gamma(q)=\delta(q)$. Since $\delta$ is in the closure of $\Gamma$, the intersection $U\cap\Gamma$. must be nonempty, containing some $\gamma$. By the proposition, if $\gb\in U\cap\Gamma$ is another element, then $\gb(r)=\gamma(r)$ for every $r<q$, since the group element $\gb^{-1}\gamma\in\Gamma$ fixes $q$ and with it all rationals $r<q$. Since $\delta$ is in the closure of $\Gamma$, it must be the case that $\delta(r)=\gamma(r)$ for all $r<q$.
\end{proof}

\noindent Now, let $X$ be a structure on $\mathbb{Q}$ in a countable language such that $\aut(X)\subseteq\aut(\mathbb{Q})$ is the closure of the group $\Gamma$. The standard construction of such a structure includes, for each $n\in\gw$, the equivalence relation $E_n$ on $\mathbb{Q}^n$ defined by $\bar x=\bar y$ if there is $\gamma\in\Gamma$ such that $\gamma(\bar x)=\bar y$, and a separate predicate for each of the equivalence classes of $E_n$. The main result of this section verifies the main properties of the group $\aut(X)$ and relates it to the discussion in \cite{rosendal:global}.

\begin{theorem}
The group $\aut(X)$ is pliable, it has a narrow basis, and it is not Roelcke precompact.
\end{theorem}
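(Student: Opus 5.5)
The plan is to first describe the open subgroups of $\aut(X)$ concretely and then read off all three properties. For $q\in\mathbb{Q}$ let $U_q=\{\delta\in\aut(X)\colon \delta(r)=r$ for all $r\leq q\}$.

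\textbf{Step 1: open subgroups.} I claim that for every finite $a\subset\mathbb{Q}$, $\pstab(a)=\stab(\max a)=U_{\max a}$. If $\delta\in\aut(X)$ fixes $q$, Corollary~\ref{keycorollary} applied to a rational $q'>q$ gives $\gamma\in\Gamma$ agreeing with $\delta$ below $q'$. Then $\gamma(q)=q$, so by Proposition~\ref{keyproposition} $\gamma$, and hence $\delta$, fixes every $r\leq q$. Consequently the subgroups $U_q$, $q\in\mathbb{Q}$, form a neighborhood basis of $1$ that is linearly ordered by inclusion. Every open subgroup $\Delta$ contains some $U_p$.

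\textbf{Step 2: pliability.} The key computation is $\gamma U_q\gamma^{-1}=U_{\gamma(q)}$ for every $\gamma\in\aut(\mathbb{Q})$; this holds because $\gamma$ is order-preserving. Let $\Delta$ be an open subgroup with $U_p\subseteq\Delta$. Pick $a>p$ and $q>a$, and put $V=U_q\subseteq\Delta$. By Proposition~\ref{simpleproposition}, $\gamma_a$ fixes every $r\leq a$, so every power $\gamma_a^n$ lies in $U_p\subseteq\Delta$. Also $\{\gamma_a^n(q)\colon n\in\gw\}$ is cofinal in $\mathbb{Q}$. Hence, for any $q'$, some conjugate $\gamma_a^nV\gamma_a^{-n}=U_{\gamma_a^n(q)}$ is contained in $U_{q'}$, and the conjugates of $V$ by elements of $\Delta$ form a neighborhood basis of $1$. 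Taking $\Delta=\aut(X)$ gives the first clause of pliability.

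\textbf{Step 3: narrow basis.} I would take Rosendal's definition from \cite{rosendal:global} and verify it for the basis $\{U_q\}$. That basis is a chain of open subgroups, all conjugate to each other by Step 2, and each is again of the same form. I expect the required property to follow from exactly these two facts, linear order and mutual conjugacy. Matching the precise definition is the step I am least sure of, and so is the main obstacle in the sense of bookkeeping rather than mathematics.

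\textbf{Step 4: not Roelcke precompact.} The plan is to show that the double coset space $U_0\backslash\aut(X)/U_0$ is infinite; equivalently, $U_0$ has infinitely many orbits on the orbit $\aut(X)\cdot 0\cong\aut(X)/U_0$. Fix $a<0$. By Proposition~\ref{simpleproposition}, the points $\gamma_a^{-n}(0)$ for $n\in\gw$ are pairwise distinct and lie in $(a,0]$. Every element of $U_0$ fixes each of them, so they lie in infinitely many distinct $U_0$-orbits. Hence $U_0FU_0\neq\aut(X)$ for every finite $F$, and $\aut(X)$ is not Roelcke precompact.
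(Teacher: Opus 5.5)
Steps 1, 2 and 4 are correct. Step 2 is the paper's pliability argument: the paper conjugates $\stab(a_1)$ by powers of $\gamma_{a_0}$, exactly as you do.

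Step 4 takes a different but valid route. The paper uses Corollary~\ref{keycorollary} to see that every element of $\aut(X)$, and hence all of $UFU$, fixes a common initial segment $(-\infty,s)$. You instead count $U_0$-orbits on $\aut(X)\cdot 0$ via the points $\gamma_a^{-n}(0)$. That is the standard double-coset formulation and needs only Step 1.

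One small correction: $\gamma U_q\gamma^{-1}=U_{\gamma(q)}$ should be stated for $\gamma\in\aut(X)$, not $\aut(\mathbb{Q})$.

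The genuine gap is Step 3. You give no argument, and the heuristic you plan to rely on is false. A neighborhood basis consisting of a chain of mutually conjugate, self-similar open subgroups need not be narrow. Take $\mathbb{Q}_p\rtimes\mathbb{Z}$, with $n$ acting by multiplication by $p^n$. The subgroups $p^k\mathbb{Z}_p$ form such a chain, each conjugated onto the next by the generator of $\mathbb{Z}$. But the group is locally compact and $\sigma$-compact. So for any neighborhoods $U_n$, the $n$-th piece of a compact exhaustion is covered by finitely many translates $F_nU_n$, and $\bigcup_nF_nU_n$ is the whole group. Separately, Step 2 only makes $U_q$ and $U_{q'}$ conjugate when $q'$ lies in the $\aut(X)$-orbit of $q$. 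Nothing in Proposition~\ref{densityproposition} forces that action to be transitive.

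What is actually needed is a diagonal construction. Take $U_n=U_{q_n}$ with $q_n$ increasing and unbounded, and let finite sets $F_n$ be given. You must produce $\delta\in\aut(X)$ with $\delta(q_n)\notin\{\beta(q_n)\colon\beta\in F_n\}$ for all $n$, which gives $\delta\notin F_nU_n$. Your sketch is missing two ingredients.

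\begin{itemize}
\item \emph{Infinite index.} This is exactly what fails in the $p$-adic example. Here it holds because for $q<q'$ and $q<a<q'$, the infinitely many points $\gamma_a^k(q')$ all lie in the $U_q$-orbit of $q'$.
\item \emph{Convergence.} If the $\delta_n$ merely agree on longer and longer initial segments, their pointwise limit is an order embedding, not necessarily a surjection. You must also make the $\delta_n^{-1}$ stabilize, which requires the chosen values to go to infinity. Closedness of $\aut(X)$ then puts the limit in the group.
\end{itemize}

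The paper handles both with a recursion driven by density condition (4) and Proposition~\ref{keyproposition}, keeping $\delta(s_n)=r_n\in(s_n,q_{n+1})$. Given your Step 1, a simpler version also works. Set $\delta_{n+1}=\delta_n\gamma_a^k$, with $q_{n-1}<a<q_n$ (any $a<q_0$ when $n=0$). Choose $k$ so that $\delta_{n+1}(q_n)$ avoids $\{\beta(q_n)\colon\beta\in F_n\}$ and exceeds $n$.
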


\begin{proof}
To see that the group is pliable, suppose that $U\subset\aut(X)$ is an open subgroup. Pick a finite set $b\subset\mathbb{Q}$ such that $\pstab(b)\subseteq U$. Let $\max(b)<a_0<a_1\in\mathbb{Q}$ be rational numbers. We will show that the group $V=\stab(a_1)$ has the required properties.
	
	First of all, every element of $\Gamma$ fixing $a_1$ also fixes all rationals smaller than $a_1$ by Proposition~\ref{keyproposition}; in particular, it fixes all elements of $b$. This feature persists to the closure of $\Gamma$; in particular, $V\subset U$ holds.
	
	Now, let $c\subset\mathbb{Q}$ be any finite set; we must find a conjugate of $V$ by an element of $U$ which is a subset of $\pstab(c)$. To do that, use Proposition~\ref{simpleproposition} to see that $\gamma_{a_0}\in\pstab(b)$ and there is a number $n\in\gw$ such that $\gamma_{a_0}^n(a_1)>\max(c)$. It will be enough to show that $\gamma_{a_0}^{n}V\gamma_{a_0}^{-n}\subseteq\pstab(c)$. This is immediate though from the observation that $\gamma_{a_0}^{-n}$ sends all elements of $c$ below $a_1$ and all elements of $V$ fix all rationals below $a_1$.

For the narrow basis, recall that a Polish group $\Delta$ has a narrow basis if there are open neighborhoods $U_n$ of the unit for $n\in\gw$ such that for every sequence of finite sets $F_n\subset \Delta$ for $n\in\gw$, $\Delta\neq \bigcup_nF_nU_n$. To produce a narrow basis for $\aut(X)$, let $q_n$ for $n\in\gw$ be rational numbers strictly increasing and unbounded in $\mathbb{Q}$, let $U_n=\stab(q_n)$, and work to prove that $\langle U_n\colon n>0\rangle$ is a narrow basis.

To this end, let $F_n\subset\aut(X)$ be finite sets for $n>0$. By recursion on $n\in\gw$ build rational numbers $q_n<s_n<r_n<q_{n+1}$ and elements $\delta_n\in\Gamma$ so that for every $n\in\gw$,

\begin{itemize}
\item for no $\beta\in F_{n+1}$ $\beta(s_n)=r_n$;
\item for every $m<n$, $\delta_n(s_m)=r_m$.
\end{itemize}

\noindent The construction is started with $\delta_0=1$. Now suppose that $\delta_n$ and $s_m, r_m$ for all $m<n$ have been constructed. Let $q_n<s_n<q_{n+1}$ be any rational number and $s_n<I_n<q_{n+1}$ be any nonempty interval which does not contain any of the values $\gb(s_n)$ for $\gb\in F_{n+1}$. Consider the value $u_n=\delta_n(s_n)$. By the density requirement (4) in Proposition~\ref{densityproposition} there must be a rational number $r_{n-1}<a_n<\min(\delta_n(s_n), q_n)$ such that either $\gamma_{a_n}(u_n)\in I_n$ or $\gamma_{a_n}^{-1}(u_n)\in I_n$. Accordingly, let $\delta_{n+1}=\gamma_{a_n}\delta_n$ or $\delta_{n+1}=\gamma_{a_n}^{-1}\delta_n$, and let $r_n=\delta_{n+1}(s_n)\in I_n$. This completes the recursion step.

In the end, observe that for any $m<n<k$, $\delta_n(s_m)=\delta_k(s_m)=r_m$, so the group element $\delta_n\delta_k^{-1}\in\Gamma$ fixes $s_m$ and by Proposition~\ref{keyproposition} it fixes all elements smaller than $s_m$, and $\delta_n$ and $\delta_k$ must agree on all rationals smaller than $s_m$. As the rationals $s_m$ for $m\in\gw$ form an unbounded set, the group elements $\delta_n$ for $n\in\gw$ must form a converging sequence in $\aut(\mathbb{Q})$. Let $\delta=\lim_n\delta_n\in\aut(X)$; the first demand of the recursive requirement above implies that $\delta\notin\bigcup_{n>0}F_nU_n$ as required.

For the failure of Roelcke precompactness, recall that a Polish group $\Delta$ is Roelcke precompact if for every open neighborhood $U$ of the unit there is a finite set $F\subset\Delta$ such that $\Delta=UFU$. To see why it fails for the group $\aut(X)$, choose any rational $q$ and consider the open neighborhood $U=\{\gamma\in\aut(X)\colon \gamma(q)=q\}$. Let $F$ be any finite set. By the definition of the group $\Gamma$, every $\delta\in\Gamma$ fixes an initial segment of rationals, and this feature persists to $\aut(X)$ by Corollary~\ref{keycorollary}. Thus, there must be a rational $s<q$ such that all elements of $F$ fix all rationals smaller than $s$. Then, all elements of $UFU$ fix all rationals smaller than $s$, and $\aut(X)\neq UFU$.
\end{proof}

\section{Cli groups}
\label{clisection}

Theorem~\ref{theoremB} relies on a close correspondence between a certain set-theoretic rank and a $\mathbf{\Pi}^1_1$-rank on cli groups recently isolated by Allison and Panagiotopoulos \cite{panagiotopoulos:ranks}. 

\begin{definition}
	A set $A$ has \emph{wo-rank} $0$, $\worank(A)=0$, if it is well-orderable. A set $A$ has \emph{wo-rank} $\leq\ga$, $\worank(A)\leq\ga$, if it is a union of a well-ordered collection of sets of rank $<\ga$. If no ordinal bound can be found, write $\worank(A)=\infty$.
\end{definition}

\noindent It can be proved by transfinite induction on $\ga$ that $A\subseteq B$ implies $\worank(A)\leq\worank(B)$. In consequence, the well-orderable unions in the previous definition can be taken as disjoint well-orderable unions.

\begin{definition}
	\cite{panagiotopoulos:ranks}
	Let $\Gamma$ be a Polish non-archimedean group. A pair $(V, U)$ of open neigborhoods of the identity in $\Gamma$ has \emph{cli-rank} $0$, $\clirank(V, U)=0$, if $U\subseteq V$, and cli-rank $\leq\ga$, $\clirank(V, U)\leq\ga$, if there is an open neighborhood $W\subseteq U$ of the identity such that for every $\gamma\in U$, $(V, \gamma W\gamma^{-1})$ has rank $<\ga$. If no ordinal bound can be found, write $\clirank(U, V)=\infty$.
\end{definition}

\noindent An immediate transfinite induction argument shows that $V'\subseteq V$ and $U'\subseteq U$ together imply $\clirank(V', U)\geq\clirank(V, U')$. If $U$ happens to be a subgroup of $\Gamma$ and $V\subseteq U$, then the evaluation of $\clirank(V, U)$ does not really depend on $\Gamma$, but only on $U$. We are now ready to prove Theorem~\ref{theoremB}.

\begin{theorem}
	Let $X$ be a countable structure. The following are equivalent:
	
	\begin{enumerate}
		\item the group $\aut(X)$ is cli;
		\item in $W[[X]]$, every set appears at some stage of the well-orderability hierarchy.
	\end{enumerate}
\end{theorem}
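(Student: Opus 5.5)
Throughout, $\Gamma=\aut(X)$, and the argument passes through the cli-rank of \cite{panagiotopoulos:ranks}. The fact I will use from that paper is that a Polish non-archimedean group is cli if and only if $\clirank(V,\Gamma)<\infty$ for every open neighborhood $V$ of the unit; equivalently, $\clirank(V,U)<\infty$ for all pairs of open neighborhoods. It also suffices to test basic open subgroups $V=\pstab(b)$ and $U=\pstab(a)$ with $a\subseteq b$ finite. The task is then to match this rank with the wo-rank in $W[[X]]$. The central claim is that for finite $a\subseteq b$,
$$\worank\big(\pstab(a)\cdot b\big)=\clirank\big(\pstab(b),\pstab(a)\big),$$
where $\pstab(a)\cdot b$ is the $\pstab(a)$-orbit of the tuple $b$, a set in $W[[X]]$ with support $a$. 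Equality up to a constant shift, or merely simultaneous finiteness, would also suffice.

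\textbf{Rank correspondence.} I would prove each inequality by induction on the ordinal. Rank $0$ on the cli side means $\pstab(a)\subseteq\pstab(b)$, so the orbit is a singleton and is well-orderable. Conversely, if the orbit is well-orderable in $W[[X]]$, some well-order has support $c\supseteq a$. Each point of the orbit is then fixed by $\pstab(c)$, so the orbit is finite and each of its points lies in $\bar c$; the dynamical-closure remarks of the pliable section handle this. For the successor step in the direction cli-rank $\le\alpha\Rightarrow$ wo-rank $\le\alpha$, take the witness $W\subseteq U$, shrink it to $W=\pstab(c)$, and split the orbit $U\cdot b$ according to the $U$-orbit of $c$. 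The pieces are indexed by $\gamma\cdot c$ for $\gamma\in U$, and each piece is the orbit $\gamma W\gamma^{-1}\cdot b$, of wo-rank $<\alpha$ by induction. The index set $U\cdot c$ is not itself well-orderable, so I would instead group the pieces by $W$-invariant type and use the fact that a union of sets of rank $<\alpha$ indexed by a set of rank $<\alpha$ has rank $\le\alpha$ after reindexing. The main technical lemma I need here is a monotonicity/absorption statement for the wo-rank. In the other direction, suppose the orbit is a well-ordered union $\bigcup_{\xi}A_\xi$ of sets of lower rank, the union having support $c\supseteq a$. Then $\pstab(c)$ fixes each $A_\xi$. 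Set $W=\pstab(c)$; for $\gamma\in U$ the orbit $\gamma W\gamma^{-1}\cdot b$ is contained in some $A_\xi$, so by induction $\clirank(V,\gamma W\gamma^{-1})<\alpha$.

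\textbf{Lifting from orbits to all sets.} For (1)$\Rightarrow$(2): every set in $W[[X]]$ is, by $\in$-induction, a well-orderable union of orbits of its elements under the stabilizer of a support. Using the displayed correspondence and the fact that ranks are countable once the group is cli, an induction on the von Neumann rank over the atoms shows that every set has ordinal wo-rank. The induction step uses closure of the hierarchy under well-orderable unions and images, and also under finite products, which I must check. For (2)$\Rightarrow$(1): apply (2) to the sets $\pstab(a)\cdot b$ and read off finite cli-ranks, hence cli by \cite{panagiotopoulos:ranks}.

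I expect the main obstacle to be the successor step from cli-rank to wo-rank. There the natural index set $U\cdot c$ is not well-orderable, and the decomposition must be reorganized into a well-ordered family, possibly at the cost of increasing the rank by a bounded amount. Exact equality of the ranks is not needed for the theorem: it suffices that each rank is finite-ordinal-bounded in terms of the other.
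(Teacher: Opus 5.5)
\textbf{The gap: the successor step for (1)$\Rightarrow$(2).} The step you need is ``$\clirank(\pstab(b),U)\le\ga$ implies $\worank(U\cdot b)\le\ga$'', and it does not go through as you set it up. The sets $\gamma W\gamma^{-1}\cdot b$, $\gamma\in U$, all contain $b$. So they do not partition $U\cdot b$, and need not even cover it: if $W$ is normal in $U$ they all coincide with $W\cdot b$. Perhaps you mean the fibres $\{\delta\cdot b\colon \delta\in U,\ \delta\cdot c=\gamma\cdot c\}=\gamma W\cdot b$ over $\gamma\cdot c\in U\cdot c$. These are translates of the single orbit $W\cdot b$, so only the instance $\gamma=1$ of the hypothesis bears on them. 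Meanwhile the wo-rank of the index set $U\cdot c$ reflects $\clirank(\pstab(c),U)$, about which the hypothesis says nothing. The absorption lemma you fall back on is unproved. Its natural proof needs a simultaneous choice of decompositions for all the pieces, which is exactly what $W[[X]]$ lacks. In any case it would not apply, because the index set has no reason to have rank $<\ga$.

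\textbf{The missing idea.} No reindexing is needed: partition $U\cdot b$ into $W$-orbits, with $W\subseteq U$ the open-subgroup witness.
\begin{itemize}
\item Each $W$-orbit is $W$-invariant. So the family of $W$-orbits inside $U\cdot b$ is pointwise fixed by the open group $W$, and hence is well-orderable in $W[[X]]$.
\item For $\gamma\in U$, the orbit $W\cdot\gamma b=\gamma\cdot((\gamma^{-1}W\gamma)\cdot b)$ is an automorphic image of the $\gamma^{-1}W\gamma$-orbit of $b$. Since $\gamma^{-1}\in U$, we have $\clirank(\pstab(b),\gamma^{-1}W\gamma)<\ga$.
\item Induction therefore gives $\worank(U\cdot b)\le\clirank(\pstab(b),U)$, with no shift.
\end{itemize}
The induction must be stated for arbitrary open subgroups $U$, because $\gamma^{-1}W\gamma=\pstab(\gamma^{-1}c)$ is not of the form $\pstab(a')$ with $a'\subseteq b$. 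If you state it, as the paper does, for an arbitrary $B\in W[[X]]$ in place of $b$, your lifting step also becomes unnecessary. A set $A$ with support $a$ is the well-orderable union of the orbits $\pstab(a)\cdot B$ for $B\in A$.

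\textbf{The other direction.} It is salvageable, but not as stated.
\begin{itemize}
\item Exact equality of the ranks fails: a two-point orbit has wo-rank $0$ and cli-rank $1$.
\item The base-case orbit need not be finite.
\item $\gamma W\gamma^{-1}\cdot b$ lies in the translate $\gamma\cdot A_\eta$ with $\gamma^{-1}b\in A_\eta$, not in some $A_\xi$. The decomposition is fixed by $\pstab(c)$, not by $U$.
\end{itemize}
With invariance and monotonicity of wo-rank, and again inducting over arbitrary open subgroups, this yields $\clirank(\pstab(b),U)\le\worank(U\cdot b)+1$. Applied with $U=\Gamma$, it gives (2)$\Rightarrow$(1) directly from the Allison--Panagiotopoulos criterion. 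That is a legitimate alternative to the paper's route. The paper instead uses $\worank(X)$ to bound $\clirank(\pstab(a),U)$ uniformly for a single open subgroup $U$, and then extends a complete left-invariant metric from $U$ to $\Gamma$.
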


\begin{proof}
Let $X$ be a countable structure, let $\aut(X)$ be the group of its automorphisms acting on $X$ by application, let $I$ be the ideal of finite subsets of $X$. Let $W[[X]]$ be the permutation model associated with the dynamical ideal $\aut(X)\acts X, I$. The parallel between the two ranks is elucidated by the following two propositions.

\begin{claim}
	\label{first}
	Let $B\in W[[X]]$ be a set, let $U\subset\aut(X)$ be an open subgroup. If $\clirank(\stab(B), U)\leq\ga$ then $\worank(U\cdot B)\leq\ga$.
\end{claim}

\begin{proof}
	By transfinite induction on $\ga$, for all $B, U$ simultaneously. If $\clirank(\stab(B), U)=0$ then $U\subseteq\stab(B)$, so $U\cdot B=\{B\}$, $U\cdot B$ is therefore well-orderable, and it has wo-rank $0$. Suppose that $\clirank(\stab(B), U)\leq\ga$ and find a subgroup $W\subseteq U$ such that for all $\gamma\in U$, $\clirank(\stab(B), \gamma W\gamma^{-1})<\alpha$. Divide $U\cdot B$ into $W$-orbits. The set of all $W$-orbits which are subsets of $U\cdot B$ is in the model $W[[X]]$ and well-orderable there, as each of these orbits is $W$-invariant. So, to show that $\worank(U\cdot B)\leq\ga$, it is enough to show that each of these $W$-orbits has wo-rank $<\ga$. Pick $\gamma\in U$. Then, $\stab(\gamma\cdot B)=\gamma\stab(B)\gamma^{-1}$, and $\clirank(\gamma\stab(B)\gamma^{-1}, W)=\clirank(\stab(B), \gamma^{-1}W\gamma)<\ga$. By the induction hypothesis, $\worank(W\cdot \gamma\cdot B)<\ga$ and the proof is complete.
\end{proof}

\begin{claim}
	\label{second}
	Let $A\subset X$ be a set of wo-rank $\leq\ga$. There is an open subgroup $U\subset\aut(X)$ such that for every finite set $a\subset A$, $\clirank(\pstab(a), U)\leq\ga$.
\end{claim}

\begin{proof}
	By transfinite induction on $\ga$. If $\ga=0$, then $A$ is well-orderable, the group $U=\pstab(A)$ is open, and it works: for each finite set $a\subset A$, $\pstab(A)\subseteq\pstab(a)$, so $\clirank(\pstab(a), U)=0$. Now, suppose that $\worank(A)\leq\ga$, and let $A=\bigcup B$, where $B$ is a well-orderable set of sets of wo-rank $<\ga$. We claim that $U=\pstab(B)$ works. Let $a\subset A$ be a finite set. Let $C\subset B$ be a finite subset of $B$ such that $a\subset\bigcup C$. By the definitions, there is an ordinal $\gb<\ga$ such that $\worank(\bigcup C)\leq\gb$. Let $W\subset U$ be an open subgroup witnessing the induction hypothesis for $\bigcup C$; We claim that it witnesses the fact that $\clirank(\pstab(a), U)\leq\ga$. Let $\gamma\in U$ be any element. Then $C$ is fixed by $\gamma$, so $\gamma\cdot a\subset\bigcup C$. By the choice of $W$, $\clirank(\pstab(\gamma\cdot a), W)\leq\gb$. However, $\pstab(\gamma\cdot a)=\gamma\pstab(a)\gamma^{-1}$, so $\clirank(\pstab(\gamma\cdot a), W)=\clirank(\pstab(a), \gamma^{-1}W\gamma)<\ga$ as desired.
\end{proof}

\noindent The theorem now quickly follows. By \cite[Theorem 1.2]{panagiotopoulos:ranks}, a Polish group $\Gamma$ is cli iff for every open subset $U\subset \Gamma$, $\clirank(U, \Gamma)\neq\infty$; in fact, in such a case these ranks are all countable.  So, suppose first that $\aut(X)$ is cli, and let $A\in W[[X]]$ be any set; we must show that $\worank(A)\neq\infty$. By the definition of the model $W[[X]]$, there is a finite set $a\subset X$ such that $\pstab(a)\subseteq\stab(A)$. By \cite[Theorem 1.2]{panagiotopoulos:ranks} and the monotonicity properties of cli-rank, $\clirank(\stab(A), \pstab(a))\neq\infty$ holds. By Claim~\ref{first}, the wo-rank of the set $\pstab(a)\cdot A=A$ is not infinite. 

Suppose on the other hand that for every set $A\in W[[X]]$, $\worank(A)\neq\infty$. In particular, $\worank(X)\neq\infty$, and by Claim~\ref{second}, there is an open subgroup $U\subset\aut(X)$ such that for every finite set $a\subset X$, $\clirank(\pstab(a), U)\leq\ga$. The monotonicity properties of cli-rank together with \cite[Theorem 1.2]{panagiotopoulos:ranks} show that the open subgroup $U$ of $\aut(X)$ is cli. It is easy to argue abstractly that $\Gamma$ is cli: if $d$ is a complete left-invariant metric on $U$ bounded by $1$, then it can be extended to a complete left-invariant metric on $\Gamma$ by setting $d(\gamma, \delta)=d(1, \gamma^{-1}\delta)$ if $\gamma^{-1}\delta\in U$ and $d(\gamma, \delta)=2$ otherwise. Thus, $\Gamma$ is cli as needed. This completes the argument.
\end{proof}

\section{Classification of open subgroups}
\label{bootstrappingsection}

In the main result of this section, we find a necessary and sufficient amalgamation criterion for when a limit of a relational Fraiss{\' e} class with disjoint amalgamation has classification of open subgroups.

\begin{definition}
	Let $\mathcal{F}$ be a relational Fraiss{\'e} class.
	
	\begin{enumerate}
		\item Structures $A, B\in\mathcal{F}$ are in \emph{amalgamation position} if $A\restriction (\dom(A)\cap \dom(B)) =
		B\restriction (\dom(A)\cap\dom(B)$;
		\item for a pair $A, B\in\mathcal{F}$ in amalgamation position, a \emph{minimal amalgamation} of $A$ and $B$ is a structure $C$ on $\dom(A)\cup\dom(B)$ such that $C\restriction\dom(A)=A$ and $C\restriction \dom(B)=B$.
	\end{enumerate}
\end{definition} 

\begin{definition}
	\label{bootstrappingdefinition}
	Let $\mathcal{F}$ be a relational Fraiss{\' e} class. Let $A, B\in\mathcal{F}$ be a pair in amalgamation position and $C, D$ be minimal amalgamations of $A$ and $B$.
	
	\begin{enumerate}
		\item $C, D$ are \emph{close} if there is a \emph{focus of difference} between $C$ and $D$--an element $b\in\dom(B)\setminus\dom(A)$ such that $C\restriction (\dom(A)\cup\dom(B))\setminus\{b\}$ is equal to $D\restriction (\dom(A)\cup\dom(B))\setminus\{b\}$;
		\item a \emph{walk} from $C$ to $D$ is a sequence $\langle C_i\colon i\in j+1\rangle$ of minimal amalgamations of $A$ and $B$ such that $C_0=C$, $C_j=D$, and for each $i\in j$, $C_i$ and $C_{i+1}$ are close;
		\item $\mathcal{F}$ is a \emph{bootstrapping class} if it has disjoint amalgamation and for any pair $A, B$ in amalgamation position and any two minimal amalgamations $C,D$ of $A$ and $B$ there is a walk from $C$ to $D$.
	\end{enumerate}
\end{definition}

\noindent Note that the concept of closeness as stated is not symmetric in that the focus of difference is always on the $B$-side. In the walks, the foci are going to change, but they always must remain on the same side. Most proofs showing that a certain relational Fraiss{\' e} class is a bootstrapping class
are quite natural: among all minimal amalgamations of $A$ and $B$, they identify
an optimal one and then show how an arbitrary minimal amalgamation can be
transformed to the optimal one one point at a time. 

To kick off the list of examples, recall that a relational Fraiss{\' e} class $\mathcal{F}$ is \emph{hereditary} if it is closed under subsets in the sense that if $A, B$ are structures for its relational language with the same domain, $A\in\mathcal{F}$, and all relations in $B$ are subsets of the corresponding relations in $A$, then $B\in\mathcal{F}$ holds.

\begin{example}
	\label{hereditaryexample}
	If $\mathcal{F}$ is a relational, hereditary Fraiss{\' e} class in finite language with disjoint amalgamation, then $\mathcal{F}$ is a bootstrapping class.
\end{example}

\begin{proof}
	Let $A,B$ be a pair of structures in $\mathcal{F}$ in amalgamation position. Let $D$ be the
	minimal disjoint amalgamation of $A$ and $B$ obtained by taking the union of
	the corresponding relations in $A$ and $B$. This is indeed a structure in $\mathcal{F}$ by the heredity assumption, since its relations are subsets of any minimal disjoint amalgamation of $A$ and $B$.
	Now, if $C$ is any minimal disjoint amalgamation of $A$ and $B$, it is possible to
	produce a walk from $C$ to $D$ by setting $C_0=C$ and then erasing tuples in relations in $C$ which are not in $D$ one by one; any element of $\dom(B)\setminus\dom(A)$ which is in the tuple being erased will be the focus of difference between two consecutive structures in the walk.  It is easy to see that all structures obtained in this walk are in $\mathcal{F}$ as $\mathcal{F}$ is hereditary. By the finiteness of the language of $\mathcal{F}$, the walk will end in $D$ after finitely many steps in the walk. The proof is complete.
\end{proof} 

\begin{example} 
	The class of all graphs is bootstrapping.
\end{example}

\begin{example}
	For any number $k>2$, the class of all graphs not containing a
	clique of cardinality $k$ is bootstrapping.
\end{example}

\noindent The non-hereditary bootstrapping classes are much more interesting.

\begin{example}
	\label{orderexample}
	The class of linear orderings is bootstrapping.
\end{example}

\begin{proof}
	Let $A$ and $B$ be a pair of linear orderings in amalgamation position. There is an optimal minimal amalgamation $D$ of $A$ and $B$, in which if $a\in\dom(A)\setminus\dom(B)$ and $b\in\dom(B)\setminus\dom(A)$ are consecutive points, then $b\leq a$ holds. There
	is only one amalgamation satisfying this property: inside every interval specified by two consecutive points of $A\cap B$, elements of $\dom(B)\setminus\dom(A)$ come before elements of $\dom(A)\setminus\dom(B)$. Now, given an arbitrary minimal disjoint
	amalgamation $C$ of $A$ and $B$, it is possible to produce a walk from $C$ to $D$ by at each step flipping some pair of two consecutive elements $a\in\dom(A)\setminus\dom(B)$ and $b\in\dom(B)\setminus\dom(A)$ for which $a\leq b$ holds. Note that $b$ will always be the focus of difference of two such amalgamations on the walk.
	
	The walk has to end after finitely many steps, since for every element $a\in A$ the set of all elements of $b\in B$ which are above $a$ is 
	non-increasing along the walk, and at each step of the walk one of these finite sets actually becomes smaller. The final element of the walk must be the amalgamation $D$.
\end{proof}

\begin{example}
	\label{posetexample}
	The class of partial orderings is bootstrapping.
\end{example}

\begin{proof}
	Let $A$ and $B$ be a pair of partial orderings in amalgamation position. There is an optimal amalgamation $D$ of $A$ and $B$ in which elements $a\in A$ and $b\in B$ are comparable only if there is an element of $a\cap b$ between them. This is in fact the inclusion-smallest minimal amalgamation available. 
	
	Now, given an arbitrary minimal amalgamation $C$ of $A$ and $B$, construct a walk from $C$ by demanding that at each stage of the walk exactly one pair $\langle a, b\rangle$ or $\langle b, a\rangle$ is removed from the amalgamation for $a\in A\setminus B$ and $b\in B\setminus A$, if at all possible. Successive amalgamations on such a walk are clearly close, because the point $b$ will be the focus of difference. 
	
	The walk must end after finitely many steps as in each step the cardinality of the amalgamating order relation is decreased. It is enough to show that the final station of the walk must be equal to $D$; this amounts to showing that if $E\neq D$ is any amalgamation, then the walk can proceed one more step past $E$. Indeed, if $E\neq D$, then there must be elements $a\in A\setminus B$ and $b\in B\setminus A$ which are comparable in $E$ and such that there is no element of $A\cap B$ between them. Selecting such points $a, b$ such that the set of elements between them is inclusion-minimal, it becomes clear that such points $a, b$ have no element of $A\cup B$ strictly between them, and then the pair $\langle a, b\rangle$ or $\langle b, a\rangle$ can be removed from the relation and the resulting relation will remain a partial ordering and an amalgamation of $A$ and $B$.
\end{proof}

\begin{example}
	The class of rational metric spaces is bootstrapping.
\end{example} 

\begin{proof}
	Let $A$ and $B$ be a pair of rational metric spaces in amalgamation position, with
	metrics $d_A$ and $d_B$. There is an optimal minimal amalgamation $D$ of $A$ and $B$, in which if $a\in \dom(A)\setminus\dom(B)$ and $b\in\dom(B)\setminus\dom(A)$ are points
	then the distance of $a$ and $b$ is the minimum of the set $\{d_A(a, c) + d_B(c, b)\colon c\in
	\dom(A)\cap \dom(B)\}$. In view of the triangle inequality, this is the largest metric
	available. Now, given an arbitrary minimal amalgamation $C$ of $A$ and $B$. Construct a walk from $C$ such that at each step, exactly one distance between an element of $\dom(A)\setminus\dom(B)$ and an element of $\dom(B)\setminus\dom(A)$ is increased as much as possible without violating the triangle inequality.
	
	The walk has to end after finitely many steps. To see this, select a positive rational number $q$ such that every distance used in $C$ is an integer multiple of $q$. It is easy to prove by induction that all distances in the successive amalgamations on the walk will be integer multiples of $q$. Thus, the sum of all distances used in the amalgamation is increasing each time by at least $q$, and it cannot overtake the sum of all distances in $D$. It will be enough to show that the final amalgamation of the walk must be $D$. 
	
	To do this, it is enough to show that if $E$ is an amalgamation in which all distances are integer multiples of $q$ and at least one distance is smaller than that in $D$, then adding $q$ to one of the distances will not violate the triangle inequality.
	
	Find points $a\in\dom(A)\setminus\dom(B)$ and $b\in\dom(B)\setminus\dom(A)$ such that $d_E(a, b) < d_D(a, b)$ and $d_E(a, b)$ is the smallest possible. It will be enough to show that for every $c\in\dom(A)\cup\dom(B)$, $d_E(a, b)+q\leq d_E(a, c)+d_E(c, b)$. Without loss assume that $c\in\dom(A)$. Since all distances in $E$ are integer multiples of $q$, it will be enough to derive a contradiction from the assumption
	that $d_E(a, b) = d_E(a, c) + d_E(c, b)$. 
	
	Now, if $c\in\dom(A)\cap\dom(B)$, this would mean that $d_E(a, b)\geq d_D(a, b)$, contrary to the choice of the points $a$ and $b$. If, on the other hand, $c\in\dom(A)\setminus\dom(B)$ holds, then by the minimal choice of the points $a$ and $b$ it has to be the case that $d_E(c, b) = d_D(c,b)$. Pick an element $e\in\dom(A)\cap\dom(B)$ such that $d_E(c, b) = d_A(c, e) + d_B(e, b)$. By the triangle inequality in $A$, $d_A(a, e)\leq d_A(a, c)+d_A(c, e)$. The strict inequality is impossible here, since
	it would violate the triangle inequality in $E$ in the triangle with vertices $a, b, e$. However, the equality leads to the conclusion that $d_E(a, b) = d_A(a, e)+d_B(e, b)\geq d_D(a, b)$, violating the choice of the points $a, b$.
\end{proof}

\begin{example}
	\label{selectorexample}
	The class of selectors is bootstrapping.
\end{example}

\noindent This is the class of all finite structures $A$ equipped with a function $f_A$ which, to each nonempty subset of $\dom(A)$ assigns one of its elements. It is not diffcult to restate this class as one with an infinite relational language.

\begin{proof}
	The argument is slightly different from the previous ones in that one does not need to find an optimal amalgamation to streamline the argument. Let
	$A$ and $B$ be a pair in amalgamation position and $C$ and $D$ two of its minimal disjoint
	amalgamations. To get a walk from $C$ to $D$, at each amalgamation in the walk switch the selector value on a single nonempty set $a\subset\dom(A)\cup\dom(B)$ to $f_D(a)$.
	
	Observe that this set $a$ must have nonempty intersection with $\dom(B)\setminus\dom(A)$ (otherwise its value would be dictated by $A$) and every point of this intersection will be a focus of difference between the two successive amalgamations on the walk. It is clear that the walk must terminate after finitely many steps in the amalgamation $D$.
\end{proof}

\noindent The collection of bootstrapping Fraiss{\' e} classes is closed under the operation of superposition. Here, if $\mathcal{F}$ and $\mathcal{G}$ are Fraiss{\' e} classes in disjoint relational languages $\mathcal{L}, \mathcal{K}$ and with disjoint amalgamation, write $\mathcal{F}*\mathcal{G}$ for the \emph{superposition} of $\mathcal{F}$ and $\mathcal{G}$, the class of all finite structures $A$ such that $A\restriction \mathcal{L}\in \mathcal{F}$ and $A\restriction\mathcal{K}\in \mathcal{G}$. The result is again a Fraiss{\' e} class with disjoint amalgamation. The following example is simple and at the same time very useful.

\begin{example}
	\label{superpositionexample}
	If $\mathcal{F}$ and $\mathcal{G}$ are bootstrapping Fraiss{\' e} classes in disjoint languages $\mathcal{L}, \mathcal{K}$, then so is their superposition $\mathcal{F}*\mathcal{G}$.
\end{example}

\begin{proof}
	Let $A$ and $B$ be structures in $\mathcal{F}*\mathcal{G}$ in amalgamation position, and let $C, D$ be minimal amalgamations. Produce a walk from $C$ to $D$ which is an amalgamation of two parts. In the first part, the difference between $C$ and $D$ in $\mathcal{L}$ is gradually rectified while the restriction to $\mathcal{K}$ is kept the same; in the second part, the difference between $C$ and $D$ in $\mathcal{K}$ is gradually rectified while the restriction to $\mathcal{L}$ is kept the same. 
\end{proof}

\begin{example} 
	Let $\mathcal{F}$ be the Fraiss{\' e} class of equivalence relations. Then $\mathcal{F}$ is not bootstrapping. To see that, consider disjoint structures $A, B\in\mathcal{F}$, both of them consisting of two equivalent elements. There are two possible minimal disjoint amalgamations of $A$ and $B$, but one cannot be obtained from the other by just switching the equivalence on a single point.
\end{example}

\begin{example}
	\label{ultraexample}
	Let $\mathcal{F}$ be the Fraiss{\' e} class of ultrametric spaces in which all distances are rational. Then $\mathcal{F}$ is not bootstrapping. To see that, let $A, B$ be two disjoint metric spaces, each consisting of two points of distance $1$. Consider the amalgamations $C_2$ and $C_3$ obtained by assigning distance $2$ or $3$ between points of $A$ and $B$ respectively. There is no way to adjust the amalgamation $C_2$ (or $C_3$) in just one point, so there is no walk from $C_2$ to $C_3$.
\end{example}

\noindent The main result of this section is the following characterization of classification of open subgroups, including Theorem~\ref{theoremC} as its sub-statement. To set up the notation, let $\mathcal{F}$ be a relational Fraiss{\' e} class with disjoint amalgamation and $X$ its limit. For each structure $B\in\mathcal{F}$ let $X^B$ be the set of all injective maps from $B$ to $X$ which preserve all relations in the language of $\mathcal{F}$. For all structures $A\subseteq B$ consider the equivalence relation $E_A$ connecting two elements of $X^B$ if they agree on $A$, and a graph $G_A$ connecting two elements of $X^B$ if they agree on all elements of $B$ except for one point, and this point does not belong to $A$. Note that the group $\aut(X)$ acts on $X^B$ by composition, and this action preserves both $E_A$ and $G_A$. Note also that $G_A\subset E_A$ holds; the key issue is if $E_A$ is the connectedness relation of $G_A$ or not.

\begin{theorem}
	\label{bootstrappingtheorem}
	Let $\mathcal{F}$ be a relational Fraiss{\' e} class with disjoint amalgamation and let $X$ be its limit. The following are equivalent:
	
	\begin{enumerate}
		\item $\mathcal{F}$ is bootstrapping;
		\item for all structures $A\subseteq B$ in the class $\mathcal{F}$, the connectedness equivalence relation of $G_A$ is equal to $E_A$;
		\item $X$ admits classification of open subgroups.
	\end{enumerate}	
\end{theorem}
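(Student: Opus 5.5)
I would prove (1)$\Rightarrow$(2)$\Rightarrow$(3)$\Rightarrow$(1), with homogeneity of the limit $X$ translating between finite amalgamation data and points of $X^B$.

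\textbf{Step (1)$\Rightarrow$(2).} Let $A\subseteq B$ and let $f,g\in X^B$ agree on $A$. Since $G_A\subseteq E_A$, it suffices to show that $f$ and $g$ are $G_A$-connected.

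First reduce to the case where $\rng(f)$ and $\rng(g)$ meet only in $f[A]=g[A]$. Using disjoint amalgamation and the extension property of $X$, pick $h\in X^B$ agreeing with $f$ on $A$ whose values off $A$ avoid $\rng(f)\cup\rng(g)$. Then it is enough to connect $f$ to $h$ and $h$ to $g$.

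So assume the ranges are disjoint off $A$. Identify $\rng(f)$ with a copy $B_0$ of $B$ and $\rng(g)$ with a copy $B_1$, sharing $A$. The structure that $X$ induces on $\rng(f)\cup\rng(g)$ is a minimal amalgamation $C$ of $B_0$ and $B_1$. Now swap the roles: $B_1$ is the "$B$-side" and foci lie in $\rng(g)\setminus A$.

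I also need a second minimal amalgamation $D$ of $B_0$ and $B_1$ in which $B_1$ is an exact copy of $B_0$ via $f\circ g^{-1}$. This is wrong as stated, since the two copies are disjoint off $A$ and cannot literally coincide. The fix is to route through a third embedding. Choose $D$ to be any amalgamation, realize it in $X$ by homogeneity, and compose walks.

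The cleaner approach is a different reduction. For any minimal amalgamation $C'$ of $B_0,B_1$, extension and homogeneity give $g'\in X^B$ agreeing with $g$ on $A$ such that $\rng(f)\cup\rng(g')$ realizes $C'$. A step of the bootstrapping walk changes one focus point $b$. By the extension property, such a step corresponds to moving one value of the $B_1$-copy while keeping the others fixed. Hence $g$-type embeddings realizing close amalgamations are $G_A$-adjacent.

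Concretely, I would walk from $C$ (realized by $g$) to an amalgamation $D$ that is realized by some $g''$ whose values off $A$ can be changed one point at a time toward $f$. Take $D$ to be an amalgamation in which $B_1$ sits relative to $B_0$ as $B_0$ sits relative to a fresh disjoint copy. Iterating gives a chain. I expect this bookkeeping, in particular making each "move one point" step realizable inside $X$ while all other points stay fixed, to be the main obstacle. The extension property handles it, because the new point is determined by a one-point extension of the remaining structure.

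\textbf{Step (2)$\Rightarrow$(3).} Let $\Delta$ be an open subgroup of $\aut(X)$. Among finite $b$ with $\pstab(b)\subseteq\Delta$, choose one and let $a\subseteq b$ be minimal such that $\pstab(a)\subseteq\Delta$ still holds. Set $B=b$, $A=a$, and let $e$ be the inclusion $B\to X$.

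To show $\pstab(a)\subseteq\Delta$, take $\gamma\in\pstab(a)$. Then $\gamma e\mathrel{E_A}e$, so by (2) there is a $G_A$-path $e=e_0,\dots,e_n=\gamma e$. For consecutive $e_i,e_{i+1}$ differing at a single point, both agree on $B$ minus that point. The key sublemma is that if $\pstab(c\cup\{x\})\subseteq\Delta$ and $\pstab(c\cup\{y\})\subseteq\Delta$ with $x,y$ of the same type over $c$, then $\pstab(c)\subseteq\Delta$, using disjoint amalgamation to find a third point. This lets me shrink the support set from $b$ down to $a$ and yields $\pstab(a)\subseteq\Delta$.

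For $\Delta\subseteq\stab(a)$, the argument is symmetric. If $\delta\in\Delta$ moved $a$, then $\pstab(\delta a)\subseteq\Delta$ as well. Disjoint amalgamation then shows that the group generated by $\pstab(a)$ and $\pstab(\delta a)$ equals $\pstab(a\cap\delta a)$, which contradicts the minimality of $a$.

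\textbf{Step (3)$\Rightarrow$(1).} Argue by contrapositive. If $C,D$ are amalgamations with no walk between them, realize both $B$-copies in $X$ over $A$. Let $\Delta$ be the set of $\gamma\in\pstab(A)$ that map the realization of $B$ in $C$ into its $G_A$-component. This $\Delta$ is an open subgroup with $\pstab(B)\subseteq\Delta$. However, $\Delta$ is not of the form required by the classification, because it is strictly between $\pstab(a)$ and $\stab(a)$ for every candidate finite $a$.
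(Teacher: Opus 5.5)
Your outline has the paper's overall shape. You reduce (1)$\Rightarrow$(2) to ranges disjoint over $A$ via a third embedding, take a minimal support in (2)$\Rightarrow$(3), and use the subgroup preserving a $G$-component for the converse. But the load-bearing steps are either missing or rest on a false claim.

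\textbf{(2)$\Rightarrow$(3).} This is the most serious problem. You chose $a$ with $\pstab(a)\subseteq\Delta$, so your first paragraph there proves nothing, and hypothesis (2) is never actually used. Everything hinges on the claim that $\pstab(a)\cup\pstab(\delta a)$ generates $\pstab(a\cap\delta a)$, which you attribute to disjoint amalgamation. That is false. Take the limit of finite equivalence relations, a class with disjoint amalgamation. For equivalent $x\neq y$, both $\pstab(\{x\})$ and $\pstab(\{y\})$ lie in the setwise stabilizer of the class of $x$. That stabilizer is an open subgroup violating classification. The same example refutes your one-point ``key sublemma''.

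This generation lemma is exactly where (2) must enter. The paper proves it as follows.
\begin{enumerate}
\item Given $\gamma\in\pstab(a\cap a')$ and a finite $c\supseteq a\cup a'$, write $c=\rng(\phi_0)$ with $a\cap a'$ the image of $A$.
\item Take a $G_A$-walk $\phi_0,\dots,\phi_n=\gamma\phi_0$, choose $\gamma_i\in\aut(X)$ with $\gamma_i\phi_i=\phi_{i+1}$, and put $\delta_i=\gamma_{i-1}\cdots\gamma_0$.
\item Then $\delta_i^{-1}\gamma_i\delta_i$ fixes every point of $c$ except possibly one outside $a\cap a'$, so it lies in $\pstab(a)\cup\pstab(a')$.
\item Induction on $i$, together with the fact that open subgroups are closed, finishes the argument.
\end{enumerate}

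\textbf{(1)$\Rightarrow$(2).} The one-point moves you worry about are routine by the extension property. The real gap is that nothing makes your walk end exactly at the target embedding. Moving $g''$ ``one point at a time toward $f$'' would need a special amalgamation that you neither define nor justify. The missing device is transport by an automorphism:
\begin{enumerate}
\item Keep $f$ fixed throughout, and walk from $f$ to a fresh $h$, disjoint from $f$ over $A$, by replacing one point at a time.
\item Realize the bootstrapping walk from the amalgamation type of $f\cup h$ to that of $f\cup g$, with foci on the second copy. This reaches some $g'$ such that $f\cup g'$ realizes the same amalgamation as $f\cup g$.
\item An automorphism extending $(f\cup g)(f\cup g')^{-1}$ fixes $\rng(f)$ pointwise and preserves $G_A$. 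It therefore carries the walk from $f$ to $g'$ onto a walk from $f$ to $g$.
\end{enumerate}

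\textbf{(3)$\Rightarrow$(1).} Your $\Delta$ is the paper's, provided you take the pointwise stabilizer of the image of $A\cap B$ only. If you stabilize all of $A$, no element of $\Delta$ can carry a realization of $C$ to one of $D$. Even so, nothing is verified, and two things are needed.
\begin{enumerate}
\item The $G_{A\cap B}$-component of the realization of $B$ in $C$ must omit the realization in $D$. A walk between them, after its auxiliary points are moved off the image of $A\setminus B$, would be an amalgamation walk from $C$ to $D$.
\item You must check both cases to see that no finite $c$ satisfies $\pstab(c)\subseteq\Delta\subseteq\stab(c)$. If $c\subseteq a$, use an element of $\pstab(a)$ leaving the component. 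If $c\not\subseteq a$, take a member of the component avoiding some $x\in c\setminus a$, and fix it pointwise by an automorphism moving $x$ out of $c$.
\end{enumerate}
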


\begin{proof}
	To show that (1) implies (2), suppose that the class $\mathcal{F}$ is bootstrapping. Suppose that $A\subseteq B$ are structures in $\mathcal{F}$. First, we introduce some notation. Two $E_A$-related maps $\phi, \psi\in X^B$ are \emph{disjoint over $A$} if $\rng(\phi)\cap\rng(\psi)=\rng(\phi\restriction A)$. For any two such maps the symbol $\phi\dot\cup\psi$ describes a map from two copies $B_0$ and $B_1$ of $B$ which intersect in $A$; the pullback from $X$ by $\phi\dot\cup\psi$ is then a minimal disjoint amalgamation of $B_0, B_1$.
	
	\begin{claim}
		For any $\phi\in X^B$ and any minimal disjoint amalgamation $D$ of $B_0$ and $B_1$ there is a $G_A$-walk from $\phi$ to some $\psi\in X^B$, disjoint over $A$ with $\phi$, and such that the pullback from $X$ by $\phi\dot\cup\psi$ is $D$.
	\end{claim}
	
	\begin{proof}
First use the universality properties of $X$ to walk from $\phi$ to some $\psi_0$ which is disjoint over $A$ from $\phi$. Consider the amalgamation $C$ of $B_0\cup B_1$ which is a pullback from $X$ by $\phi\cup\psi_0$. There is an amalgamation walk $\langle C_i\colon i\in n+1\rangle$ from $C$ to $D$. By the universality properties of $X$, this can be translated to a $G_A$-walk $\langle \phi_i\colon i\in n+1\rangle$ such that all of these maps are disjoint over $A$ with $\phi$ and the pullback from $X$ by $\phi\dot\cup\psi_i$ is equal to $C_i$. Then $\psi=\psi_n$ is as desired.
	\end{proof} 
	
	\begin{claim}
		For any $\phi, \psi\in X^B$ which are $E_A$-related and disjoint over $A$ there is a $G_A$-walk from $\phi$ to $\psi$.
	\end{claim}
	
\begin{proof}
	By the previous claim, find a walk from $\phi$ to some $\psi'$ which is disjoint from it over $A$ such that the pullback amalgamations by $\phi\dot \cup\psi$ and $\phi\dot\cup\psi'$ are the same. Use the homogeneity properties of $X$ to find an automorphism $\delta\in\aut(X)$ such that $\delta$ extends the map $(\phi\dot\cup\psi)(\phi\dot\cup\psi')^{-1}$. Then $\delta$ transports the walk from $\phi$ to $\psi'$ to a walk from $\phi$ to $\psi$.
\end{proof}	

\begin{claim}
For any $\phi_0, \phi_1\in X^B$ which are $E_A$-related there is a $G_A$-walk from $\phi_0$ to $\phi_1$.
\end{claim}	

\begin{proof}
	Just use the homogeneity properties of $X$ to find a map $\psi\in X^B$ which is $E_A$-related to both $\phi_0$ and $\phi_1$, and disjoint from both $\phi_0$ and $\phi_1$ over $A$. Then use the previous claim and find a walk from $\phi_0$ to $\psi$ and a walk from $\psi$ to $\phi_1$. Concatenating the two completes the proof.
\end{proof}

\noindent (1)$\to$(2) follows.

	To show that (2) implies (1), suppose that (2) holds and $A, B$ are structures in $\mathcal{F}$ in amalgamation position and $C, D$ are their amalgamations. By the universality properties of $X$, there are maps $\pi\in X^A$ and $\phi, \psi\in X^B$ which agree on $A\cap B$ and such that $C$ is the pullback from $X$ by $\pi\cup\phi$ and $D$ is the pullback from $X$ by $\pi\cup\psi$. In addition, it is possible to select $\phi$ and $\psi$ so that their ranges intersect with $\rng(\pi)$ only in $\rng(\pi\restriction A\cap B)$ . By (2), there is a $G_{A\cap B}$-walk $\langle \psi_i\colon i\in n+1\rangle$ such that $\psi_0=\phi$ and $\psi_n=\psi$. Using the homogeneity and universality features of $X$, such a walk can be found such that for every point $x$ appearing in one of its embeddings, if $x\in\rng(\pi)$ then $x\in\rng(\pi\restriction A\cap B)$. For each $i\in n+1$, let $C_i$ be the amalgamation of $A$ and $B$ which is the pullback from $X$ using the inverse of the map $\pi\cup\psi_i$. A review of the definitions shows that $\langle C_i\colon i\in n+1\rangle$ is a walk of amalgamations from $C$ to $D$.

	To show that the negation of (2) implies the negation of (3), write $F_A$ for the connectedness relation of $G_A$ and assume that (2) fails. Find structures $A\subseteq B$ in $\mathcal{F}$ and an embedding $\phi\in X^B$ such that $[\phi]_{F_A}$ is a proper subset of $[\phi]_{E_A}$. Write $a=\rng(\phi\restriction A)\subset X$ and consider the set $\Delta\subseteq\aut(X)$ defined by $\gamma\in\Delta$ iff $\gamma\in\pstab(a)$ and $\gamma$ permutes $[\phi]_{F_A}$. It will be enough to show that $\Delta$ is an open subgroup of $\aut(X)$ violating classification of open subgroups.
	
	First of all, it is clear from the definitions that $\Delta$ is in fact a subgroup of $\aut(X)$. Second, action by elements of $\pstab(a)$ preserves the graph $G_A$ on elements of $X^B$ which extend $\phi\restriction A$, so membership $\gamma\in\Delta$ is verified by checking that the value of $\gamma\cdot\phi$ belongs to $[\phi]_{F_A}$. In consequence, the group $\Delta\subseteq\aut(X)$ is open. Now suppose that $c\subset X$ is a finite set and work to prove that $\pstab(c)\subseteq\Delta\subseteq\stab(c)$ fails. There are two cases. Either $c\subseteq a$. In this case, use the initially assumed failure of (2) to find an element $\psi\in X^B$ which is $E_A$-related but not $F_A$-related to $\phi$, and an automorphism $\gamma\in\aut(X)$ extending the map $\psi\phi^{-1}$. Then $\gamma\in\pstab(a)\subseteq\pstab(c)$ holds, and obviously $\gamma\in\Delta$ fails. Or $c\setminus a\neq 0$. In this case, let $x\in X$ be an element of the set difference. Use the ultrahomogeneity properties of $X$ to find an element $\psi\in [\phi]_{F_A}$ such that $x\notin\rng(\psi)$. Use the ultrahomogeneity properties of $X$ again to find an element $\gamma\in\aut(X)$ such that $\gamma\in\pstab(\rng(\psi))$ and $\gamma(x)\notin c$. Clearly, $\gamma\in\stab(c)$ fails. At the same time, $\gamma\in\Delta$ must hold, since $\gamma$ fixes an element (namely $\psi$) of the equivalence class $[\phi]_{F_A}$ which is $\pstab(a)$-invariant.
	
	To show that (2) implies (3), assume that (2) holds. The following claim familiar from \cite[Theorem 4.2.9]{hodges:book} is key:
	
	\begin{claim}
		\label{generationclaim}
		For any two finite sets $a, b\subset X$, $\pstab(a\cap b)$ is equal to the group generated by $\pstab(a)\cup\pstab(b)$.
	\end{claim}
	
	\begin{proof}
		The right-to-left inclusion is clear. For the opposite inclusion, note that both groups are open and therefore closed, so it is enough to show that the group generated by $\pstab(a)\cup\pstab(b)$ is dense in $\pstab(a\cap b)$. Let $\gamma\in\pstab(a\cap b)$ be any element and $c\subset X$ be any finite superset of $a\cup b$; we must produce a finite composition $\delta$ of elements of $\pstab(a)\cup\pstab(b)$ such that $\gamma\restriction c=\delta\restriction c$.
		
		To this end, choose structures $A\subseteq B$ in $\mathcal{F}$ and a map $\phi\in X^B$ such that $a\cap b=\rng(\phi\restriction A)$ and $c=\rng(\phi)$. Note that $\gamma\cdot \phi$ is $E_A$-related to $\phi$. Use the assumption (2) to find a finite walk $\langle \phi_i\colon i\in n+1\rangle$ in the graph $G_A$ from $\phi$ to $\gamma\cdot\phi$. For each $i\in n$ let $\gamma_i\in\aut(X)$ be any element extending $\phi_i^{-1}\phi_{i+1}$ witness the graph relation. Observe that $\gamma_i\in\pstab(a\cap b)$ and there is a single point $x_i\in\rng(\phi_i)$ which is not in fixed by $\gamma_i$. It will be enough to show that for each $i\in n$, $\gamma_i$ is a composition of some elements of $\pstab(a)\cup\pstab(b)$, since the composition $\delta$ of $\gamma_i$'s in decreasing order has the required property.
		
		This is proved by induction on $i$. Suppose that the hypothesis has been verified for all $j\in i$, and write $\delta_i$ for the composition of all $\gamma_j$ for $j\in i$ in decreasing order. Consider the group element $\eps=\delta_i^{-1}\gamma_i\delta_i$. Then $\eps$ fixes all entries of $a\cup b$ except possibly the point $\delta_i^{-1}(x_i)$. This point cannot be in $a\cap b$ as the $\gamma_j$'s are all in $\pstab(a\cap b)$; so it must be in at most one of $a$ or $b$. Accordingly, $\eps$ belongs to $\pstab(b)$ or $\pstab(a)$. In conclusion, $\gamma_i=\delta_i\eps\delta_i^{-1}$ is a composition of some elements in $\pstab(a)\cup\pstab(b)$ as desired.
	\end{proof}
	
	\noindent Now, suppose that $\Delta\subseteq\aut(X)$ is an open subgroup. By the claim, the nonempty set of all finite sets $a$ such that $\pstab(a)\subseteq\Delta$ is closed under intersection. In conclusion, there must be an inclusion-smallest set $a\subset X$ such that $\pstab(a)\subseteq\Delta$. It will be enough to show that all elements of $\Delta$ permute the set $a$; for then, $\pstab(a)\subseteq\Delta\subset\stab(a)$ holds.
	
	Let $\gamma\in\Delta$ be arbitrary. Then $\pstab(\gamma\cdot a)\subset\Delta$ must hold: if $\delta\in\pstab(\gamma\cdot a)$ then $\gamma^{-1}\delta\gamma\in\pstab(a)$, so $\gamma^{-1}\delta\gamma\in\Delta$ and $\delta\in\Delta$. By the smallest choice of $a$ it follows that $a=\gamma\cdot a$ as desired.
\end{proof}

\noindent There are many other countable structures which admit classification of open subgroups. The following propositions describe two classes of such structures which are ubiquitous in the literature on permutation models.

\begin{proposition}
Let $X$ be a countable structure in which each $\aut(X)$-orbit is finite. Then $X$ admits classification of open subgroups.
\end{proposition}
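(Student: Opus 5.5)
Since every orbit is finite, $\aut(X)$ is a closed subgroup of $\prod_{O}\mathrm{Sym}(O)$, the product over the orbits $O$ of the finite symmetric groups; in particular it is compact (profinite). The plan is to exploit compactness: an open subgroup of a compact group has finite index, and open subgroups are determined by finitely many orbits.

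Let $\Delta\subseteq\aut(X)$ be an open subgroup. First I pick a finite set $b$ with $\pstab(b)\subseteq\Delta$. I then enlarge $b$ to $a$, the union of all orbits meeting $b$. Since each orbit is finite and $b$ is finite, $a$ is finite. Moreover $a$ is $\aut(X)$-invariant, so $\stab(a)=\aut(X)$. Also $\pstab(a)\subseteq\pstab(b)\subseteq\Delta$. Hence
\[
\pstab(a)\subseteq\Delta\subseteq\aut(X)=\stab(a),
\]
which is exactly what classification of open subgroups requires.

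There is essentially no obstacle. The only point to check is that the definition of classification only asks for the sandwich $\pstab(a)\subseteq\Delta\subseteq\stab(a)$, without any minimality of $a$. The argument therefore reduces to noting that a union of finitely many finite orbits is a finite invariant set. Compactness is not even needed, although it explains why the statement is natural.
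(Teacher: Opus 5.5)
Your proof is correct and is essentially the paper's argument. Like the paper, you take a finite $b$ with $\pstab(b)\subseteq\Delta$ and enlarge it to the finite $\aut(X)$-invariant union of the orbits meeting $b$, which gives $\pstab(a)\subseteq\Delta\subseteq\aut(X)=\stab(a)$. The compactness remark is not needed, as you note yourself.
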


\begin{proof}
Let $F$ be the family of all finite subsets of $X$ invariant under the action of $\aut(X)$, and suppose that $X=\bigcup F$. Let $\Delta\subseteq\aut(X)$ be an open subgroup. Find a finite set $b\subset X$ such that $\pstab(b)\subseteq\Delta$ holds. Using the initial assumption on the structure $X$, we  may enlarge the finite set $b$ if necessary to one which is $\aut(X)$-invariant. Then $\Delta\subset\stab(b)=\aut(X)$, so $\pstab(b)\subseteq\Delta\subseteq\stab(b)$ holds as desired for classification of open subgroups.
\end{proof}

\begin{definition}
Let $X$ and $Y$ be relational structures in respective disjoint languages $\mathcal{L}, \mathcal{K}$. Then $X\times Y$ denotes the structure whose language is $\mathcal{L}\cup\mathcal{K}$ with one additional binary symbol $E$ where for every relational symbol $R\in\mathcal{L}$, $\bar z\in R^{X\times Y}$ if the tuple of $X$-coordinates of the tuple $\bar z$ belongs to $R^X$, for every relational symbol $S\in\mathcal{K}$, $\bar z\in S^{X\times Y}$ if all elements on $\bar z$ share the same $X$-coordinate and the tuple of the $Y$-coordinates belongs to $S^Y$, and $E$ connects any pair of points with the same $X$-coordinate.
\end{definition}

\begin{definition}
A structure $X$ is \emph{locally finite} if for every finite set $a\subset X$, the \emph{algebraic closure} of $a$, the set $\{x\in X\colon$ the $\pstab(a)$-orbit of $x$ is finite$\}$ is finite.
\end{definition}

\begin{proposition}
If $X$ and $Y$ are relational structures and $X$ is locally finite and has classification of open subgroups and $Y$ is finite, then $X\times Y$ admits classification of open subgroups.
\end{proposition}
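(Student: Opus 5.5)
The plan is to identify $\aut(X\times Y)$ as a wreath product and to reduce the problem to classification of open subgroups in $X$. The fibers $\{x\}\times Y$ are exactly the $E$-classes. So every $g\in\aut(X\times Y)$ induces a permutation $\pi(g)$ of $X$, and this permutation lies in $\aut(X)$, because membership in $R^{X\times Y}$ for $R\in\mathcal{L}$ depends only on the $X$-coordinates. On each fiber $g$ also induces an automorphism of $Y$, the fiber component. This gives a continuous surjective homomorphism $\pi\colon\aut(X\times Y)\to\aut(X)$; surjectivity holds because $\sigma\in\aut(X)$ lifts with identity fiber components. Its kernel is $K=\prod_{x\in X}\aut(Y)$. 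Since $Y$ is finite, every finite subset of $X\times Y$ is contained in a set of the form $e\times Y$ with $e\subset X$ finite. Moreover $\pstab(e\times Y)=\pi^{-1}(\pstab(e))\cap\prod_{x\in X\setminus e}\aut(Y)$, meaning the elements whose image fixes $e$ pointwise and whose fiber components over $e$ are trivial.

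Let $\Delta\subseteq\aut(X\times Y)$ be open, and fix a finite $a_0\subset X$ with $\pstab(a_0\times Y)\subseteq\Delta$. Then $\pi(\Delta)\supseteq\pstab(a_0)$ is an open subgroup of $\aut(X)$. Classification of open subgroups in $X$ yields a finite $a$ with $\pstab(a)\subseteq\pi(\Delta)\subseteq\stab(a)$. Let $d$ be the algebraic closure of $a$; it is finite by local finiteness, and it is invariant under $\stab(a)$, hence under $\pi(\Delta)$. The candidate is $b=d\times Y$. The inclusion $\Delta\subseteq\stab(b)$ is then immediate, since $\pi(\Delta)$ preserves $d$. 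The inclusion $\pstab(b)\subseteq\Delta$ will be proved in two steps.

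First step: $\Delta$ contains the full fiber group $\aut(Y)$ over every $x\notin d$. The $\pstab(a)$-orbit of such an $x$ is infinite, so some $\sigma\in\pstab(a)\subseteq\pi(\Delta)$ sends $x$ outside $a_0$. Pick $h\in\Delta$ with $\pi(h)=\sigma$. The fiber group over $\sigma(x)$ lies in $\pstab(a_0\times Y)\subseteq\Delta$, and conjugating it by $h$ gives the fiber group over $x$ inside $\Delta$. Hence $\Delta\supseteq\prod_{x\notin d}\aut(Y)$, because $\Delta$ is closed (being open) and contains the dense subgroup these fiber groups generate.

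Second step, which I expect to be the main obstacle: for every $\sigma\in\pstab(d)$ there is $h\in\Delta$ with $\pi(h)=\sigma$ whose fiber components over $d$ are all trivial. Granting this, any $g\in\pstab(b)$ equals such an $h$ times an element of $\prod_{x\notin d}\aut(Y)\subseteq\Delta$, so $g\in\Delta$. To prove the second step, consider $\Delta_d=\Delta\cap\pi^{-1}(\pstab(d))$ and the continuous homomorphism $\rho$ from $\Delta_d$ to the finite group $\prod_{x\in d}\aut(Y)$ recording fiber components over $d$. Its kernel $L$ is open of finite index in $\Delta_d$. Further, $\pi(\Delta_d)=\pstab(d)$, since $\pstab(d)\subseteq\pstab(a)\subseteq\pi(\Delta)$. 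So $\pi(L)$ is an open subgroup of finite index in $\pstab(d)$, and it contains $\pstab(a_0\cup d)$. Apply classification in $X$ to $\pi(L)$ to get a finite $e$ with $\pstab(e)\subseteq\pi(L)\subseteq\stab(e)$. Finite index of $\pi(L)$ in $\pstab(d)$ forces every point of $e$ to have a finite $\pstab(d)$-orbit, so $e\subseteq d$, using that the algebraic closure of $d=\mathrm{acl}(a)$ is $d$ itself. Hence $\pi(L)\supseteq\pstab(e)\supseteq\pstab(d)$, that is, $\pi(L)=\pstab(d)$, which is exactly the second step. The delicate points to verify are that $\pi(L)$ really is open (it contains $\pi$ of the open set $L\cap\pstab(a_0\times Y)$, using that $\pi$ is open as a quotient map) and the finite-index bookkeeping that forces $e\subseteq d$.
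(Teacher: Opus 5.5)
Your proof is correct. It shares the paper's skeleton: the projection $\pi$, classification applied to $\pi(\Delta)$, passage to the algebraic closure $d$, the candidate $d\times Y$, and the easy inclusion $\Delta\subseteq\stab(d\times Y)$. The two routes differ in how they prove $\pstab(d\times Y)\subseteq\Delta$.

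The paper (whose $a$ is your $d$) argues by density in one shot. Given $\delta\in\pstab(d\times Y)$ and a finite $b\subset X$, it takes an \emph{arbitrary} $\gb_0\in\Delta$ with $\pi(\gb_0)=\pi(\delta)$. It then takes a lift $\gb_1\in\Delta$ of some $\gamma\in\pstab(d)$ with $c\cap\gamma\cdot b=0$, where $\pstab(c\times Y)\subseteq\Delta$, and a fiber correction $\gb_2\in\pstab(c\times Y)$ supported on $(\gamma\cdot b)\times Y$. It sets $\delta'=\gb_0\gb_1^{-1}\gb_2\gb_1$.

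Your first step is the same conjugation trick, done one fiber at a time and finished by density of the finitely supported part of $\prod_{x\notin d}\aut(Y)$. It only needs to move a single point off $a_0$, which is immediate from the infinite orbit. The paper instead tacitly uses a Neumann-lemma-type fact to move all of $b$ off $c$ with one element of $\pstab(d)$.

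Your second step has no counterpart in the paper, and the paper's argument genuinely needs it. No $\gamma\in\pstab(d)$ can move $b$ off $c$ once $b\cap d\cap c\neq 0$. This case cannot be avoided: if $\aut(Y)$ is nontrivial and $\Delta=\pstab(d\times Y)$, every admissible $c$ contains $d$. If one instead restricts to $b$ disjoint from $d$, then $\delta'$ agrees with the arbitrary lift $\gb_0$ on $d\times Y$, which need not be the identity there. The repair is to choose $\gb_0$ trivial on $d\times Y$, i.e.\ to know that $\Delta\cap\pstab(d\times Y)$ still projects onto $\pstab(d)$. That is exactly your step two. It rests on the fact that $\pstab(d)$ has no proper open subgroup of finite index, which you get from a second use of classification in $X$ together with $\mathrm{acl}(d)=d$.

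This is where the hypotheses on $X$ really enter. Without that fact, one could twist $\Delta$ over a fiber in $d$ by a nontrivial continuous homomorphism $\pstab(d)\to\aut(Y)$ and break classification. So your route is longer but more complete than the paper's.

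One cosmetic slip: read literally, $\pi^{-1}(\pstab(e))\cap\prod_{x\in X\setminus e}\aut(Y)$ lies inside $\ker\pi$. Your verbal gloss (image fixes $e$ pointwise, trivial fiber components over $e$) is the correct description, and it is the one you actually use.
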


\begin{proof}
First observe that the function $\pi\colon\aut(X\times Y)\to\aut(X)$ defined by $\pi(\gamma)(x_0)=x_1$ if there are $y_0, y_1\in Y$ such that $\gamma(x_0, y_0)=\langle x_1, y_1\rangle$ is a continuous and open homomorphism. The standard diagram-chasing argument is left to the reader.

Now, suppose that $\Delta\subset\aut(X\times Y)$ is an open subgroup. Write $\Gamma=\pi''\Delta$; this is an open subgroup of $\aut(X)$. By the assumption, there is a finite set $a\subset X$ such that $\pstab(a)\subseteq\Gamma\subseteq\stab(a)$. Replacing $a$ with its algebraic closure if necessary, we may assume that $a$ is algebraically closed. It will be enough to show that $\pstab(a\times Y)\subseteq\Delta\subseteq\stab(a\times Y)$ holds.

It is clear that $\Delta\subseteq\stab(a\times Y)$ holds: for any element $\delta\in\Delta$, $\pi(\delta)\in\Gamma\subseteq\stab(a)$ holds, therefore $\delta$ must permute the set $a\times Y$. To show that $\pstab(a\times Y)\subseteq\Delta$ holds, it is enough to prove that $\Delta$ is dense in $\pstab(a\times Y)$. To this end, let $\delta\in\pstab(a\times Y)$ be an arbitrary element of $\aut(X\times Y)$, and let $b\subset X$ be an arbitrary finite set; we must show that there is an element $\delta'\in\Delta$ such that $\delta\restriction b\times Y=\delta'\restriction b\times Y$.

To this end, first observe that $\pi(\delta)\in\pstab(a)\subset\Gamma$, so there is $\gb_0\in\Delta$ such that $\pi(\gb_0)=\pi(\delta)$. Let $c\subset X$ be a finite set such that $\pstab(c\times Y)\subset\Delta$ holds. Let $\gamma\in\pstab(a)\subseteq\aut(X)$ be any element such that $c\cap \gamma\cdot b=0$. Observe that $\gamma\in\pstab(a)\subseteq\Gamma$, so there is $\gb_1\in\Delta$ such that $\pi(\gb_1)=\gamma$. Now, let $\gb_2$ be the element of $\aut(X\times Y)$ which fixes all elements of $X\times Y$ except for every $x\in b$ and every $y\in Y$ it sends $\gb_1(x, y)$ to $\gb_1\gb_0^{-1}\delta(x, y)$. Note that $\gb_2\in\pstab(c\times Y)$ holds, so $\gb_2\in\Delta$ must hold as well. Now consider the element $\delta'=\gb_0\gb_1^{-1}\gb_2\gb_1$. Clearly, $\delta'\in\Delta$ holds, and a diagram-chasing argument together with the definition of $\gb_2$ shows that $\delta'\restriction b\times Y=\delta\restriction b\times Y$.
\end{proof}

\noindent There is a great number of other structures which admit classification of open subgroups because they have the strong small index property, a feature much more challenging to check than the mere classification. The list of such structures contains among others the countable atomless Boolean algebra  \cite{truss:smallindex} and the infinitely branching tree with a root \cite{moller:trees}.

\section{Consequences of classification}
\label{finalsection}

In this section, we prove several consequences of classification of open subgroups. We start with a brief argument for Theorem~\ref{theoremD}; it is a more precise version of \cite[Theorem 4.2]{blass:svc}.

\begin{theorem}
	If $X$ admits classification of open subgroups, then in $W[[X]]$, $\hf(X)$ is a support set.
\end{theorem}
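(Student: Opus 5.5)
Fix a set $A\in W[[X]]$. The goal is to produce, inside $W[[X]]$, an injection of $A$ into $\hf(X)\times\ga$ for some ordinal $\ga$. The plan is to sort the elements of $A$ by their canonical supports, which classification of open subgroups supplies, and then separate elements with the same support by an ordinal index.

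The first step is to define the canonical support. For each $z\in A$ the stabilizer $\stab(z)$ is an open subgroup of $\aut(X)$, since $z\in W[[X]]$ has a finite support. By classification of open subgroups there is a finite $a\subset X$ with $\pstab(a)\subseteq\stab(z)\subseteq\stab(a)$. To make the choice canonical, take the inclusion-smallest such $a$. The argument at the end of the proof of Theorem~\ref{bootstrappingtheorem} shows that the set of finite $a$ with $\pstab(a)\subseteq\Delta$ is closed under intersection once Claim~\ref{generationclaim} is available, but here only classification is assumed. Instead I would argue directly: if $a$ and $a'$ both satisfy $\pstab(\cdot)\subseteq\stab(z)\subseteq\stab(\cdot)$, then $a$ is $\stab(z)$-invariant and $\pstab(a')\subseteq\stab(z)$, so $a$ is a union of $\pstab(a')$-orbits. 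These orbits are finite, because $a$ is finite, so they lie in the algebraic closure of $a'$. Since $\pstab(a')$ fixes $a'$ pointwise, this should give $a\subseteq$ a finite set determined by $a'$. If the clean version fails, I would fall back on a weaker canonical choice: let $s(z)$ be the \emph{set} of all finite $a$ satisfying the sandwich. This is a subset of $[X]^{<\aleph_0}$, but it may be infinite, so I would prefer to take $s(z)=\bigcap$ of such $a$, or the least element of this family. Either choice is invariant, meaning $s(\gamma z)=\gamma s(z)$, because the defining condition is $\aut(X)$-equivariant. The map $z\mapsto s(z)$ is then in $W[[X]]$, with support equal to the support of $A$. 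I expect this canonicity step to be the main obstacle.

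The second step is to linearize the elements sharing a support, using an enumeration $e$ of $z$ over its support. Fix $z$ and write $a=s(z)$. Because $\pstab(a)\subseteq\stab(z)\subseteq\stab(a)$, the group $\stab(z)$ acts on the finite set $a$ through a subgroup $H_z\le\mathrm{Sym}(a)$. I would encode $z$ by the pair consisting of $a$ together with the $H_z$-orbit of some enumeration of $a$, and the latter is an element of $\hf(X)$. Call this $t(z)$. Then $t(\gamma z)=\gamma t(z)$, and I expect $\stab(t(z))=\stab(z)$ exactly: $\gamma$ fixes the orbit of enumerations iff $\gamma\restriction a\in H_z$ and $\gamma a=a$, which by the sandwich happens iff $\gamma\in\stab(z)$.

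The third step builds the injection. Let $b$ be a finite support of $A$. Map $z\in A$ to $\langle t(z),\ga_z\rangle$, where $\ga_z$ is defined as follows. Well-order, in the ground model, the $\pstab(b)$-orbits of pairs $(z,t(z))$ with $z\in A$, and let $\ga_z$ be the index of the orbit containing $z$. This map is $\pstab(b)$-invariant, so it lies in $W[[X]]$. For injectivity, suppose $z,z'$ have the same $t$-value and the same orbit index. Then $z'=\gamma z$ for some $\gamma\in\pstab(b)$, and $\gamma t(z)=t(z)$. Hence $\gamma\in\stab(t(z))=\stab(z)$, and therefore $z'=z$.
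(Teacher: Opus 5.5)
Your second step is correct, and it is exactly the paper's key claim. For a sandwich set $a$ and any enumeration $e$ of $a$, the element $t=\langle a, H_z\cdot e\rangle\in\hf(X)$ satisfies $\stab(t)=\stab(z)$; the paper uses $y=\{\delta\cdot L\colon\delta\in\Delta\}$ for a linear order $L$ of $a$. Your injectivity argument in the third step is also fine. The gap is equivariance. The map $z\mapsto\langle t(z),\ga_z\rangle$ belongs to $W[[X]]$ only if $t(\gamma z)=\gamma t(z)$ for $\gamma\in\pstab(b)$. You try to get this from a \emph{canonical} choice of $t(z)$, and no such choice exists in general.

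First, the inclusion-smallest sandwich set need not exist. Take $X=\{x_n,y_n\colon n\in\gw\}$ with unary predicates $\{x_n,y_n\}$ and the binary relation $\{\langle x_n,x_m\rangle,\langle y_n,y_m\rangle\colon n\neq m\}$. Then $\aut(X)=\{1,\sigma\}$, where $\sigma$ swaps every $x_n$ with $y_n$. All orbits are finite, so classification holds by the paper's proposition. For $z=x_0$ we have $\stab(z)=\{1\}$, so every nonempty finite $a\subset X$ is a sandwich set. There is no least one. The intersection of all of them is $0$, which fails $\pstab(0)\subseteq\stab(z)$. The family of all of them is infinite, and in any case it is not a finite subset of $X$, which your second step needs. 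Your algebraic-closure argument only gives $a\subseteq\mathrm{acl}(a')$, and here that is all of $X$.

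Second, even granting $s(z)$, ``some enumeration of $a$'' is an arbitrary choice. Different $e$ give different cosets $H_z\cdot e$ whenever $H_z\neq\mathrm{Sym}(a)$, so the identity $t(\gamma z)=\gamma t(z)$ is simply asserted.

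The missing idea is that canonicity is not needed. Do the choosing in the ambient universe with choice, as you already do for the orbit indices:
\begin{enumerate}
\item Pick one $z$ from each $\pstab(b)$-orbit of $A$.
\item For that $z$, choose any sandwich set and any enumeration, and define $t(z)$ as in your second step.
\item Extend by $t(\gamma z)=\gamma t(z)$ for $\gamma\in\pstab(b)$.
\end{enumerate}
This is well defined because $\stab(z)\subseteq\stab(t(z))$. It is equivariant by construction, so your map is $\pstab(b)$-invariant and lies in $W[[X]]$. Injectivity then follows from $\stab(t(z))\subseteq\stab(z)$ exactly as you wrote.

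With this repair your argument is essentially the paper's proof. The paper makes $A$ $\aut(X)$-invariant and fixes a transversal of the $\aut(X)$-orbits. It attaches to each conjugacy class of open subgroups one $y_n\in\hf(X)$ with $\stab(y_n)=\Delta_n$, tagged by $n$ to keep orbits disjoint, and transports along orbits. Your per-orbit ordinal index makes that tagging unnecessary.
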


\begin{proof}
Suppose that $X$ is a countable structure which admits classification of open subgroups. 

\begin{claim}
	For every open subgroup $\Delta\subset\aut(X)$ there is a set $y\in\hf(X)$ such that $\Delta=\stab(y)$.
\end{claim}

\begin{proof}
	Use the classification assumption to find a finite set $a\subset X$ such that $\pstab(a)\subseteq\Delta\subseteq\stab(a)$. Fix a linear ordering $L$ of the set $a$ and let $y=\{\delta\cdot L\colon \delta\in\Delta\}$. This set consists of linear orders on the set $a$, therefore it is a finite set. It will be enough to show that $\Delta=\stab(y)$.
	
	The left-to-right inclusion is obvious from the definition of the set $y$. For the opposite inclusion, consider any element $\gamma\in\stab(y)$. Then there must be $\delta\in\Delta$ such that $\gamma\cdot L=\delta\cdot L$; in other words, $L=\delta^{-1}\gamma\cdot L$. As finite linear orders have no nontrivial automorphisms, this means that $\delta^{-1}\gamma\in\pstab(a)\subseteq\Delta$ holds. Then $\gamma=\delta(\delta^{-1}\gamma)\in\Delta$ holds as desired.
\end{proof}

\noindent In particular, $\aut(X)$ has only countably many open subgroups. Let $\langle\Delta_n\colon n\in\gw\rangle$ be a list of open subgroups of $\aut(X)$ which contains exactly one group from each conjugacy class.  For each $n\in\gw$, use the classification assumption to find $y_n\in\hf(X)$ such that $\Delta_n=\stab(y_n)$. Replacing $y_n$ with $\langle y_n, n\rangle$ if necessary, we may assume that the $\aut(X)$ orbits of $y_n$ are pairwise disjoint. 

Let $A\in W[[X]]$ be any set and work to find an ordinal $\ga$ and an injection from $A$ to $\hf(X)\times\ga$. Replacing $A$ with its superset $\{\gamma\cdot B\colon \gamma\in\aut(X), B\in A\}$ if necessary, assume that $\stab(A)=\aut(X)$. Observe that for every $B\in A$, $\stab(B)$ contains an open subgroup by the definition of $W[[X]]$, so it is open itself. In addition, for every $\gamma\in\aut(X)$, $\stab(\gamma\cdot B)=\gamma\cdot\stab(B)\gamma^{-1}$ holds, and so in each $\aut(X)$-orbit the stabilizers form exactly one conjugacy class of open subgroups of $\aut(X)$.

This makes it possible, in the surrounding universe $V[[X]]$ in which Axiom of Choice holds, to find a set $Z\subset A$ which selects from each $\aut(X)$-orbit exactly one element, and such that for each $B\in Z$ there is an $n\in\gw$ such that $\stab(B)=\Delta_n$. For each $n\in\gw$ write $Z_n=\{B\in Z\colon \stab(B)=\Delta_n\}$ and choose an ordinal $\ga_n$ and an injection $f\colon Z_n\to\ga_n$. Consider the set $f=\{\langle \gamma\cdot B, \langle \gamma\cdot y_n, f_n(B)\rangle\rangle\colon n\in\gw, B\in Z_n, \gamma\in\aut(X)\}$. It is clear that $f$ consists of elements of $W[[X]]$ and it is $\aut(X)$-invariant; therefore, it belongs to $W[[X]]$. It will be enough to prove that $f$ is a function and an injection; for then, it is an injection from $A$ to $\hf(X)\times\ga$, where $\ga=\sup_n\ga_n$.

To show that $f$ is a function and an injection, it is enough to show that for each set $B\in Z_n$ and group elements $\gamma, \delta\in\aut(X)$, $\gamma\cdot B=\delta\cdot B$ is equivalent to $\gamma\cdot y_n=\delta\cdot y_n$. Since $\stab(B)=\stab(y_n)=\Delta_n$, both of these equalities are equivalent to $\gamma^{-1}\delta\in\Delta_n$ and therefore are equivalent to each other. This concludes the proof.
\end{proof}

Now, we exploit the conclusion of Theorem~\ref{theoremD} abstractly to obtain conclusions about the theory of the resulting permutation model. For the first theorem, a \emph{tournament} on a set $Z$ is a directed graph on $Z$ which for any two distinct elements $z_0, z_1\in Z$ contains exactly one of the pairs $\langle z_0, z_1\rangle$ and $\langle z_1, z_0\rangle$. Clearly, existence of a tournament on every set is equivalent to the existence of a selector on any set of pairs. 

\begin{theorem}
	\label{abstracttheorem}
	\textnormal{(ZF or ZFA)} Suppose that $X$ is a set such that $\hf(X)$ is a support set. Then
	
	\begin{enumerate}
		\item if $X$ is linearly orderable then every set is linearly orderable \textnormal{\cite[Form 30]{howard:ac}};
		\item if there is a selector on $[X]^{<\aleph_0}$, then there is a selector on any family of nonempty finite sets \textnormal{\cite[Form 62]{howard:ac}};
		\item if there is a tournament on $X$ then there is a tournament on every set \textnormal{\cite[Form 88]{howard:ac}}.
	\end{enumerate}
\end{theorem}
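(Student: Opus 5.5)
The plan is to reduce each item to $\hf(X)$ and then transfer it along the injections into $\hf(X)\times\ga$ given by the support set assumption. Fix an arbitrary set $A$, an ordinal $\ga$ and an injection $g\colon A\to\hf(X)\times\ga$. The ordinal coordinate is always harmless: it is canonically well-ordered, so in each item it suffices to produce the relevant object on $\hf(X)$ and combine it with the order of $\ga$. No choice is used beyond the single fixed injection $g$ and the single object given on $X$ or $[X]^{<\aleph_0}$, so the argument goes through in ZF or ZFA.

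For (1), from a linear order on $X$ I will build a linear order on $\hf(X)$ by recursion along the levels $\hf_0=X$ and $\hf_{n+1}=\hf_n\cup[\hf_n]^{<\aleph_0}$. Elements of lower level come first. Within a level, finite subsets of a linearly ordered set are compared by the standard lexicographic comparison of their decreasingly enumerated elements, and atoms are ordered by the given order. The recursion is uniform and definable from the order on $X$, so no choices enter. Then $\hf(X)\times\ga$ is linearly ordered lexicographically, and the order is pulled back to $A$ via $g$.

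For (3), the same recursion works with a tournament in place of a linear order. To compare two distinct finite sets $u\neq v$ at level $n+1$, I will use the tournament on $\hf_n$ restricted to the finite set $u\mathbin{\triangle}v$; since no choice is available, the comparison must be canonical. The plan is to rank the elements of $u\triangle v$ by their score (out-degree) inside the finite tournament on $u\triangle v$. If there is a unique top-scoring element, the set containing it wins. Otherwise I pass to the subtournament on the top-scoring set and iterate. If the iteration reaches a regular tournament, I first try to break ties by counting how many of those elements lie in $u$ versus $v$. This uniform tie-breaking is the step I expect to be the main obstacle; if it stalls, the fallback is to use the weaker fact stated just before the theorem, that a tournament on every set is equivalent to a selector on every set of pairs, and to select from pairs $\{u,v\}$ via an inductively built selector. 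Item (3) finishes by pulling back along $g$ after combining with the order of $\ga$ lexicographically: compare $\ga$-coordinates first, and use the tournament on $\hf(X)$ when they are equal.

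For (2), the target is a selector on $[\hf(X)]^{<\aleph_0}\setminus\{\emptyset\}$; then a nonempty finite $F\subseteq A$ is handled by taking the least $\ga$-coordinate occurring in $g''F$, applying the selector to the resulting finite set of $\hf(X)$-coordinates, and pulling back. The selector is built by recursion on level via the support map $\supp(y)\in[X]^{<\aleph_0}$, which sends an atom to itself and a finite set to the union of the supports of its elements. Given a finite set $F$ of elements of $\hf(X)$, I will apply the given selector $s$ to $\bigcup_{y\in F}\supp(y)$ to obtain an atom $x$, and pass to the nonempty subfamily of $F$ of elements whose support contains $x$, or whose support omits $x$ if no element contains it. I then iterate, using the inductive hypothesis on strictly lower-level elements, and recurse into the elements' members to separate objects with equal supports. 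Termination and canonicity, including correctly separating distinct elements of $\hf(X)$ that have the same support, should follow by induction on level and on the size of $F$.
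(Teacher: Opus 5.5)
Your overall architecture is the paper's: propagate each property from $X$ to $\hf(X)$, then pull back along $g$ using the canonical order of $\ga$. Item (1) and all the pull-back steps are essentially the paper's. Your comparison of decreasing enumerations, i.e.\ by the largest element of $u\triangle v$, is a genuine linear order. The paper's displayed formula, read literally, is not transitive: for $1<2<3$ it gives $\{2\}\prec\{1,2\}\prec\{1,3\}\prec\{2\}$.

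For (3), your primary plan is correct and different from the paper's, but the proof is incomplete until you supply the step you flagged as the obstacle. The score refinement stabilizes at a nonempty $R\subseteq u\triangle v$ on which the tournament is regular. A regular tournament on $r$ vertices has every out-degree equal to $(r-1)/2$, so $r$ is odd. Hence $|R\cap(u\setminus v)|\neq|R\cap(v\setminus u)|$ and the count never stalls. Since $R$ is computed from the restriction to $u\triangle v$ alone, which is symmetric in $u,v$, the rule \emph{$u$ beats $v$ iff $R$ has more points in $u\setminus v$} is a choice-free tournament on $[\hf_n]^{<\aleph_0}$.

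Your fallback would not have helped. A selector on pairs $\{u,v\}$ of finite subsets of $\hf_n$ is literally a tournament on $[\hf_n]^{<\aleph_0}$, the very object under construction. The global equivalence of tournaments with selectors on pairs gives no transfer from $\hf_n$ to its finite subsets.

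The paper instead proves the rigidity fact (*): no automorphism of a finite tournament swaps two distinct subsets. It then uses (*) to transport fixed tournaments $S_n$ on the power sets of representatives $T_n$ of all finite tournament types. Your rule is canonical outright and needs no enumeration of types. It even implies (*) for complementary pairs: an automorphism exchanging $P$ and $Q$ would preserve $R$ and exchange $R\cap P$ with $R\cap Q$, which is impossible since $|R|$ is odd.

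In (2), your iteration is the paper's atom-by-atom selection, $z_n=F((\bigcup a_n)\setminus\{z_m\colon m\in n\})$ and $a_{n+1}=\{b\in a_n\colon z_n\in b\}$, run on supports. This treats all of $\hf(X)$ in one pass rather than iterating the construction level by level, where the paper leaves the mixing of levels implicit. Your vague ending \emph{recurse into the elements' members} should be replaced by a one-line finish:
\begin{itemize}
\item When the procedure stops, all surviving elements have the same support $a$.
\item This $a$ comes enumerated in the order of selection, and by your construction in (1) that enumeration induces a linear order on $\hf(a)$.
\item $\hf(a)$ contains every survivor because $y\in\hf(\supp(y))$, so select the least survivor.
\end{itemize}
The clause \emph{or whose support omits $x$} is vacuous, since $x$ is drawn from the union of the supports. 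The only degenerate case is empty support, which is handled by the canonical well-order of $\hf(\emptyset)$.
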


\begin{proof}
	We need a propagation claim which is interesting in its own right.
	
	\begin{claim}
		\label{propagationclaim1}
		The following properties propagate from any set $Z$ to $\hf(Z)$:
		
		\begin{enumerate}
			\item $Z$ is linearly orderable;
			\item $[Z]^{<\aleph_0}$ has a selector;
			\item there is a tournament on $Z$.
		\end{enumerate}
	\end{claim}

	\begin{proof}
		For (1), suppose that $\leq$ is a linear ordering on $Z$. $[Z]^{<\aleph_0}$ can then be linearly ordered by $a\prec b$ if $a\subseteq b$ or the $\leq$-least element of $a\setminus b$ comes before the $\leq$-least element of $b\setminus a$ in the $\leq$-order. Iterating this construction, one can obtain a linear ordering on $\hf(Z)$.
		
		For (2), let $F$ be a selector on $[Z]^{<\aleph_0}$. To define a selector $G$ on the set of all nonempty finite subsets of $[Z]^{<\aleph_0}$, let $a$ be such a set. By recursion on $n\in\gw$ define elements $z_n\in Z$ and sets $a_n\subseteq a$ by the demands $a_0=a$, $z_n=F((\bigcup a_n)\setminus\{z_m\colon m\in n\})$ and $a_{n+1}=\{b\in a_n\colon z_n\in b\}$. The construction must terminate at some $n$ as $\bigcup a$ is finite. The only way how it can terminate is that $\bigcup a_n=\{z_m\colon m\in n\}$, and this set then must be the unique element of $a_n$. Define $G(z)$ to be this element. $G$ is clearly the desired selector on $[[Z]^{<\aleph_0}]^{<\aleph_0}$. Iterating this construction, one can obtain a selector on all finite subsets of $\hf(Z)$.
		
		(3) is more challenging. The key abstract rigidity observation is the following:
		
		\begin{enumerate}
			\item[(*)] whenever $C$ is a finite set and $T$ a tournament on it, there are no two distinct sets $A_0, A_1\subset C$ and an automorphism $\phi$ of $T$ such that $\phi(A_0)=A_1$ and $\phi(A_1)=A_0$.
		\end{enumerate}
		
		\noindent (*) is proved by contradiction. Suppose $C, T, A_0, A_1, \phi$ violate the conclusion. Let $c\in A_0\setminus A_1$ be any element. By the finiteness assumption, there must be a smallest number $n$ such that $\phi^n(x)=x$. This number must be even, since the values $\phi^m(x)$ must alternate between $A_0\setminus A_1$ and $A_1\setminus A_0$ as $m$ varies over natural numbers. Let $m$ be such that $n=2m$, and observe that $\phi$ flips the orientation of the oriented edge in $T$ connecting $c$ and $\phi^m(c)$. This contradicts the assumption that $\phi$ is an automorphism of $T$.
		
		Now, as in the previous cases, we will show that if there is a tournament on $Z$ then there is a tournament on $[Z]^{<\aleph_0}$ and then extend the construction to $\hf(Z)$. First, fix a list $T_n\colon n\in\gw$ of tournaments on finite sets such that each tournament on a finite set is isomorphic to exactly one tournament on the list. For each $n\in\gw$ select a tournament $S_n$ on $\power(\dom(T_n))$. This is all possible to do using a well-ordering on $V_\gw$. Now, if $T$ is a tournament on $Z$, define a tournament $S$ on $[Z]^{<\aleph_0}$ by putting $\langle a_0, a_1\rangle\in S$ if there is an isomorphism between $T\restriction a_0\cup a_1$ and some $T_n$ which moves $\langle a_0, a_1\rangle$ to some pair in $S_n$. The key observation (*) shows that this is a well-defined tournament. (3) follows.
	\end{proof}
	
	\noindent Now, suppose that $X$ is a set such that $Y=\hf(X)$ is a support set. For (1), suppose that $X$ is linearly orderable; by Claim~\ref{propagationclaim1} (1), there is a linear order $\leq$ on $Y$. Let $A$ be any set, and let $\ga$ be an ordinal and $f\colon A\to Y\times\ga$ be an injection.  Then $Y\times\ga$ can be linearly ordered by the lexicographic product of $\leq$ and the usual well-ordering on $\ga$, and the $f$-pullback of the ordering linearly orders $A$.
	
	For (2), suppose that there is a selector on the set $[X]^{<\aleph_0}$; by Claim~\ref{propagationclaim1} (2), there is a selector $F$ on the set $[Y]^{<\aleph_0}$. Let $A$ be any set consisting of finite sets, and let $\ga$ be an ordinal and $f\colon \bigcup A\to Y\times\ga$ be an injection.  For each set $a\in A$, consider $b_a=\{f(z)_0\colon z\in a\}$; this is a nonempty finite subset of $Y$. Let $G(a)=$such $z\in a$ such that $f(z)_0=F(b_a)$ and $f(z)_1$ is minimal possible. Clearly, $G$ is a selector on $A$.
	
	For (3), suppose that there is a tournament on $X$; by Claim~\ref{propagationclaim1} (3), there is a tournament $T$ on $Y$. 
	Let $A$ be any set consisting of finite sets, and let $\ga$ be an ordinal and $f\colon \bigcup A\to Y\times\ga$ be an injection. Define a tournament $S$ on $A$ by putting $\langle B, C\rangle\in S$ if either $f(B)_1\in f(C)_1$, or $f(B)_1=f(C)_1$ and $\langle f(B)_0, f(C)_0\rangle\in T$, for any two distinct elements $B, C\in A$. It is immediate that $S$ is a tournament on $A$.
\end{proof}

\begin{example}
	Let $X$ be the set of rational numbers with their usual ordering. This is the limit of the Fraiss{\'e} class of finite linear orders. This class is bootstrapping by Example~\ref{orderexample}, so by Theorem~\ref{bootstrappingtheorem} $X$ admits classification of open subgroups. By Theorem~\ref{theoremD}, in the permutation model $W[[X]]$, $\hf(X)$ is a support set. By Theorem~\ref{abstracttheorem}(1) applied inside $W[[X]]$, every set is linearly orderable in $W[[X]]$. This is the well-known Mostowski's linearly ordered model \cite[Model $\mathcal{N}3$]{howard:ac}. 
\end{example}

\begin{example}
	\label{lauchliexample}
	Consider the Fraiss{\'e} class of selectors on finite sets, and its limit $X$. The class is bootstrapping by Example~\ref{selectorexample}, so by Theorem~\ref{bootstrappingtheorem} $X$ admits classification of open subgroups. By Theorem~\ref{theoremD}, in the permutation model $W[[X]]$, $\hf(X)$ is a support set.  By Theorem~\ref{abstracttheorem}(2) applied inside $W[[X]]$, every set of nonempty finite sets has a selector in $W[[X]]$. This is the L{\" a}uchli model I \cite[Model $\mathcal{N}7$]{howard:ac} restated in rather more efficient terms.
\end{example}

\begin{example}
	Consider the Fraiss{\' e} class of tournaments on finite sets, and its limit $X$. It is not difficult to show that the class is bootstrapping.  By Theorem~\ref{bootstrappingtheorem} $X$ admits classification of open subgroups. By Theorem~\ref{theoremD}, in the permutation model $W[[X]]$, $\hf(X)$ is a support set.  By Theorem~\ref{abstracttheorem}(3) applied inside $W[[X]]$, there is a tournament on every set.
\end{example}

\noindent The point here is that the same arguments apply immediately to other models. Thus, in the Mathias--Pincus model II \cite[Model $\mathcal{N}5$]{howard:ac} in which there is a partial ordering and a linear ordering on the set of atoms, every set is linearly orderable basically by a succession of Examples~\ref{orderexample}, ~\ref{posetexample}, and~\ref{superpositionexample} and Theorem~\ref{abstracttheorem}(1).

All permutation models obtained from limits of bootstrapping classes share certain features which are not necessarily true in the general classification of open subgroups case. Here is the key notion.

\begin{definition}
	Let $X$ be a set.

\begin{enumerate}
\item A \emph{cone} on $X$ is a set $\{b\in [X]^{<\aleph_0}\colon a\subseteq b\}$ for some finite set $a\subseteq X$ referred as the base of the cone;
\item $X$ \emph{has no algebraicity} if every set $A\subseteq [X]^{<\aleph_0}$ closed under intersection either contains a cone or is disjoint from a cone.
\end{enumerate}
\end{definition}

\begin{proposition}
\label{algproposition}
	\textnormal{(ZF or ZFA)}
	Let $X$ be a set with no algebraicity.
	
	\begin{enumerate}
		\item every subset of $X$ has no algebraicity;
		\item every equivalence relation on $X$ with all classes finite has only finitely many non-singleton classes;
		\item there is no injection from $\gw$ to $[X]^{<\aleph_0}$;
\item for every infinite set $A\subseteq [X]^{<\aleph_0}$ there is a set $b\subset X$ such that the set $\{x\in X\colon b\cup\{x\}\in A\}$ is infinite.
	\end{enumerate}
\end{proposition}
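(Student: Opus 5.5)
The proof goes through the four items separately. For each one I build, from the data given, a set $A\subseteq[X]^{<\aleph_0}$ (or $[Y]^{<\aleph_0}$) that is closed under intersection, apply the no-algebraicity dichotomy, and rule out the unwanted alternative directly. Everything is a definable construction, so it works in ZF or ZFA.

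For (1), let $Y\subseteq X$ and let $A\subseteq[Y]^{<\aleph_0}$ be closed under intersection. I will pull $A$ back to $A'=\{b\in[X]^{<\aleph_0}\colon b\cap Y\in A\}$. This set is closed under intersection because $(b\cap c)\cap Y=(b\cap Y)\cap(c\cap Y)$. Suppose $A'$ contains the cone with base $a$. For any finite $c\subseteq Y$ with $c\supseteq a\cap Y$, the set $c\cup a$ lies in that cone, so $(c\cup a)\cap Y=c\in A$; hence $A$ contains the $Y$-cone with base $a\cap Y$. Suppose instead $A'$ is disjoint from the cone with base $a$. The same computation shows that $A$ is disjoint from the $Y$-cone with base $a\cap Y$.

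For (2), let $E$ be an equivalence relation with finite classes. Let $A$ be the set of finite sets that are unions of $E$-classes; it is closed under intersection. The disjointness alternative is impossible. If $A$ were disjoint from the cone with base $a$, then the union of the classes of elements of $a$ would be finite, would contain $a$, and would lie in $A$. So $A$ contains a cone with some base $a$. Then $a\cup\{x\}\in A$ for every $x$, which forces $[x]_E\subseteq a\cup\{x\}$. Hence every non-singleton class meets the finite set $a$, and there are only finitely many such classes.

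For (3), suppose $f\colon\gw\to[X]^{<\aleph_0}$ is an injection and put $U=\bigcup\rng(f)$. The set $U$ is infinite, since otherwise $\rng(f)\subseteq\power(U)$ would be finite. Define $P_k=f(k)\setminus\bigcup_{m<k}f(m)$. These sets partition $U$ into finite pieces indexed by $\gw$, and infinitely many of them are nonempty. Let $A$ be the set of finite $b$ such that $b\cap U=\bigcup_{k<K}P_k$ for some $K$. Initial unions are linearly ordered by inclusion, so $A$ is closed under intersection. $A$ cannot be disjoint from the cone with base $a$: the set $a\cup\bigcup_{k<K}P_k$, with $K$ beyond all indices of points of $a\cap U$, lies in both. $A$ also cannot contain the cone with base $a$. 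To see this, choose two nonempty pieces $P_j,P_k$ with $k>j$ beyond all indices of points of $a\cap U$, and a point $x\in P_k$. Then $(a\cup\{x\})\cap U$ omits $P_j$ while meeting $P_k$, so it is not an initial union and $a\cup\{x\}\notin A$. This contradicts no algebraicity, so no such $f$ exists.

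Item (4) is the main obstacle. The plan is to apply the dichotomy to $D=\{c\in[X]^{<\aleph_0}\colon$ infinitely many elements of $A$ contain $c\}$. This set is downward closed, hence closed under intersection, and $0\in D$ because $A$ is infinite. I first want a $d\in D$ such that $d\cup\{x\}\notin D$ for all $x\notin d$. If every element of $D$ had a proper extension in $D$, then $D$ would contain arbitrarily large sets, and I would try to use item (3) together with the dichotomy for $D$ to derive a contradiction; this is the step I am least sure of. Given such a $d$, the family $A_d=\{e\in A\colon d\subseteq e\}$ is infinite, and each point outside $d$ lies in only finitely many members of $A_d$. I would then apply item (2), or the argument for (3), to the finite-valued structure $\{e\setminus d\colon e\in A_d\}$. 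The goal is to show that all but finitely many of these sets must be singletons; the set $b=d$ then witnesses (4). The delicate part is turning local finiteness of $A_d$ into an intersection-closed family that item (2) or (3) can act on without any use of choice.
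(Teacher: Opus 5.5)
Your arguments for (1)--(3) are correct and follow the paper's constructions. These are the pullback family $\{b\colon b\cap Y\in A\}$, the family of finite $E$-saturated sets, and the family of finite sets whose trace on $U$ is an initial union. Your two nonempty pieces beyond $a$ do the job of the paper's normalization $|g(n+1)\setminus g(n)|\geq 2$.

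The gap is in (4), which you leave unfinished, and the first step of your plan there is false in general, not merely unproven. If $X$ is infinite and $A=[X]^{<\aleph_0}$, then every finite set is contained in infinitely many members of $A$. So $D=[X]^{<\aleph_0}$, and no $d\in D$ is maximal. No use of (3) or of the dichotomy for $D$ can turn ``every element of $D$ has a proper extension in $D$'' into a contradiction, so your route says nothing whenever $D$ contains a cone. The second step is also left open. You would need that an infinite family of finite sets, in which each point lies in only finitely many members, consists almost entirely of singletons. Neither (2) nor the $\omega$-sequence argument of (3) applies to this directly: the natural equivalence relation, connectedness through members of the family, is not known to have finite classes.

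The missing idea is a closure operator extracted from $A$ itself. For finite $b\subset X$ let $b'=\{x\in X\colon b\cup\{x\}\in A\}$ if this set is finite, and $b'=0$ otherwise. Let $B$ be the family of all $c\in[X]^{<\aleph_0}$ such that $b'\subseteq c$ for every $b\subseteq c$. This family is closed under intersections.

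It also meets every cone. Starting from a finite $a$, the recursion $a_0=a$, $a_{n+1}=a_n\cup\bigcup\{b'\colon b\subseteq a_n\}$ produces an increasing sequence of finite sets. It must stabilize by (3), since a strictly increasing sequence would be an injection of $\omega$ into $[X]^{<\aleph_0}$. The stable value is then a superset of $a$ lying in $B$.

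By no algebraicity, $B$ contains the cone with some base $a$. As $A$ is infinite, some $c\in A$ is not a subset of $a$; pick $x\in c\setminus a$ and put $b=c\setminus\{x\}$. Then $a\cup b\in B$ and $b\subseteq a\cup b$, so $b'\subseteq a\cup b$, and in particular $x\notin b'$. Yet $b\cup\{x\}=c\in A$, which is only possible if $\{y\in X\colon b\cup\{y\}\in A\}$ is infinite. No maximal element of anything is needed, and the argument is choice-free. The same closure trick, applied to the sets $e\setminus d$ of size at least two, would also settle your second step, but once you have it the detour through $D$ is unnecessary.
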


\begin{proof}
For (1), suppose towards a contradiction that $Y\subseteq X$ is a set and $A\subseteq [Y]^{<\aleph_0}$ is a set closed under intersection which does not contain a cone and is not disjoint from a cone. The set $B\subseteq [X]^{<\aleph_0}$ given by $b\in B\liff b\cap Y\in A$ is closed under intersection and it does not contain a cone and it is not disjoint from a cone, violating the assumption on $X$.

 For (2), let $E$ be an equivalence relation on $X$ with all classes finite. Let $A\subseteq [X]^{<\aleph_0}$ be the set of all finite $E$-saturated sets. It is clearly closed under intersections. It cannot be disjoint from a cone because for any finite set $a\subset X$, the $E$-saturation of $a$ is a superset of $a$ which belongs to $A$. So, $A$ must contain the cone of all finite supersets of some finite set $a\subseteq X$. The only way that can occur is when all elements of $X\setminus a$ are pairwise inequivalent.

	For (3), suppose towards a contradiction that $g\colon \gw\to [X]^{<\aleph_0}$ is an injection. Manipulating the injection if necessary, we may assume that it is inclusion-increasing, and for each $n\in\gw$, $|g(n+1)\setminus g(n)|\geq 2$. Let $Y=\bigcup_ng_n$. The set $A\subseteq [X]^{<\aleph_0}$ defined by $b\in A\liff\exists n\ b\cap Y=g(n)$ is closed under intersections and it does not contain a cone and it is not disjoint from one, violating the assumption on $X$.

For (4), for each set $b\subset X$ write $b'=\{x\in X\colon b\cup\{x\}\in A\}$ if the latter set is finite, and $b'=0$ otherwise. Let $B=\{c\in [X]^{\aleph_0}\colon\forall b\subseteq c\ b'\subseteq c\}$. It is immediate that the set $B$ is closed under intersections. First note that $B$ cannot be disjoint from a cone. If $a\subset X$ was the base of the cone, one can define sets $a_n$ by recursion by $a_0=a$ and $a_{n+1}=a_n\cup\{b'\colon b\subseteq a_n\}$. By (3), this sequence of finite sets has to stabilize, and the stable value is then necessarily a superset of $a$ in $B$. Thus, $B$ contains a cone with a base $a$. Since the set $A$ is infinite, it must contain a set $c$ which is not a subset of $a$. Let $x\in c\setminus a$ be arbitrary and let $b=c\setminus\{x\}$. The set $a\cup b$ belongs to the set $B$, and this can occur only if $x\notin b'$; so, the set $\{x\in X\colon b\cup\{x\}\in A\}$ is infinite as desired.
\end{proof}

\begin{theorem}
	\label{algebraicitytheorem}
	\textnormal{(ZF or ZFA)} Suppose that $X$ is a set with no algebraicity such that $\hf(X)$ is a support set. Then
	
	\begin{enumerate}
		\item every set is either well-orderable or it contains an infinite set with no algebraicity, implying \textnormal{\cite[Form 63]{howard:ac}};
		\item a union of well-orderable collection of well-orderable sets is well-orderable \textnormal{\cite[Form 231]{howard:ac}};
		\item every infinite set of well-orderable sets has an infinite partial selector \textnormal{\cite[Form 380]{howard:ac}};
		\item on every collection of nonempty well-orderable sets there is a function assigning to each set one of its nonempty finite subsets \textnormal{\cite[Form 329]{howard:ac}}.
	\end{enumerate}
\end{theorem}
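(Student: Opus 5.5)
The common tool for all four items is the injection given by the support set hypothesis. Let $Y=\hf(X)$. For any set $A$ fix an ordinal $\ga$ and an injection $f\colon A\to Y\times\ga$, and for $\gb\in\ga$ write $A_\gb=\{z\in A\colon f(z)_1=\gb\}$, so that $A_\gb$ injects into $Y$. This splits $A$ into a well-ordered family of pieces, each embedded in $\hf(X)$, and everything reduces to understanding subsets of $\hf(X)$. The first step is a dichotomy lemma: every subset $S\subseteq\hf(X)$ is either well-orderable or contains an infinite subset with no algebraicity.

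The plan for the lemma is to attach to each $y\in\hf(X)$ its finite support $\supp(y)\subseteq X$, the set of atoms occurring in the transitive closure. Here $y$ is recovered from $\supp(y)$ together with the finite pattern of $y$ over it, and these patterns form a well-orderable set (a copy of $V_\gw$ indexed by enumerations). If the set $\{\supp(y)\colon y\in S\}\subseteq[X]^{<\aleph_0}$ is finite, then $\bigcup S\cap X$ is finite and $S$ is well-orderable. If it is infinite, apply Proposition~\ref{algproposition}(4) to get a finite $b$ with $Z=\{x\in X\colon b\cup\{x\}$ is a support$\}$ infinite. By Proposition~\ref{algproposition}(1), $Z$ has no algebraicity. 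Choosing for each $x\in Z$ an element of $S$ with support $b\cup\{x\}$ needs care: the preimage is finite, so I would instead pass to the infinite set of finite nonempty fibres. I would then show that this set of fibres still has no algebraicity by pulling intersection-closed families back along $x\mapsto$ fibre, in the style of the proof of Proposition~\ref{algproposition}(1). This yields an infinite subset, of a set of finite subsets of $S$, with no algebraicity. Since item (1) only requires containing such a set, I would state (1) with ``contains'' read through this finite-fibre coding, or else use that finite fibres are pairwise disjoint for distinct $x\notin b$. I expect this to be the main obstacle: transferring no algebraicity from $X$ to sets built over $X$, and ensuring that the infinite witness genuinely lies inside $S$.

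With the lemma in hand, the items follow.

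For (1): if $A$ is not well-orderable, some $A_\gb$ is not well-orderable, since otherwise we could well-order $A$ as follows. For each $\gb$ the subset $f''A_\gb\subseteq Y\times\{\gb\}$ is well-orderable, but choosing orderings simultaneously needs choice. I would avoid this by noting that a well-orderable subset of $\hf(X)$ has finitely many atoms in its supports, as no algebraicity plus Proposition~\ref{algproposition}(3) rules out countably many distinct supports. Hence it embeds canonically into $\hf(a)\times V_\gw$ for a finite $a$, and $\hf(a)$ is canonically well-orderable once a single ordering of the finite set $a$ is fixed; this last step needs only a uniform choice of orderings of finite subsets, which I still have to secure. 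Applying the lemma to the non-well-orderable piece $A_\gb$ then gives the required infinite subset with no algebraicity. Form 63 then follows, since an infinite set with no algebraicity is not Dedekind-infinite by Proposition~\ref{algproposition}(3).

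For (2): a well-ordered union $\bigcup_\gb B_\gb$ of well-orderable sets reduces through $f$ to subsets of $\hf(X)$. Each $B_\gb$ uses a finite atom set $a_\gb$. If infinitely many distinct $a_\gb$ occur, their enumeration by the index $\gb$ produces an injection of $\gw$ into $[X]^{<\aleph_0}$, contradicting Proposition~\ref{algproposition}(3). So the union lives over one finite $a$ and is well-orderable.

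For (3) and (4): given a family of well-orderable sets, each member lives over a finite $a\subseteq X$ by the same argument. Item (4) is obtained by mapping each set to its nonempty finite subset of $f$-minimal second coordinate whose first coordinates have $\subseteq$-minimal pattern. Item (3) is obtained by the same Proposition~\ref{algproposition}(3) argument: only finitely many supports occur on an infinite subfamily, over which a canonical selector exists.
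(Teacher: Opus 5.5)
The central gap is in (1), and it carries over to (3).

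You reach the right point: Proposition~\ref{algproposition}(4), applied to the set $D$ of supports, gives a finite $b$ with $Z=\{x\in X\colon b\cup\{x\}\in D\}$ infinite. You then do not see how to pick one element from each fibre, and the workarounds you propose fail:
\begin{itemize}
\item The fibres need not be finite. They are arbitrary subsets of $\hf(b\cup\{x\})$ (respectively $\hf(b\cup\{x\})\times\ga$).
\item Reading ``contains'' through a coding changes the theorem.
\item The union of the pairwise disjoint fibres has no algebraicity only if all but finitely many fibres are singletons. A set with no algebraicity has no infinite well-orderable subset (Proposition~\ref{algproposition}(3)), and no equivalence relation with finite classes and infinitely many non-singleton classes (Proposition~\ref{algproposition}(2)).
\end{itemize}

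The missing idea is that no choice is needed. Fix one linear order $L$ of the finite set $b$. For $x\in Z\setminus b$, let $L_x$ be $L$ with $x$ appended as the last element. This is a uniformly definable family of linear orders of the sets $b\cup\{x\}$. Each $L_x$ canonically induces a well-ordering of $\hf(b\cup\{x\})$, hence lexicographically of $\hf(b\cup\{x\})\times\ga$. Let $h(x)$ be the element of the fibre over $b\cup\{x\}$ with the least $f$-value. Distinct $x$ give distinct supports, so $h$ is an injection. Its range is in bijection with the infinite set $Z\setminus b\subseteq X$, which has no algebraicity by Proposition~\ref{algproposition}(1).

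Your other open issue in (1), choosing orderings uniformly for the pieces $A_\gb$, is an artefact of splitting $A$. The paper works with the supports of all of $A$ at once, so if $D$ is finite, a single linear order of $\bigcup D$ well-orders $A$. Your own argument from (2) would also close it.

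Item (3) as you state it does not go through. The claim that ``only finitely many supports occur on an infinite subfamily'' is unjustified and false in general. For $A=\{\{x\}\colon x\in X\}$ all supports are distinct, so no infinite subfamily has finitely many supports. Proposition~\ref{algproposition}(3) cannot be invoked, since $A$ is not well-orderable. The case of infinitely many supports needs the construction above.

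The paper derives (3) from (1):
\begin{itemize}
\item Make the members of $A$ disjoint. If $\bigcup A$ is not well-orderable, take an infinite $C\subseteq\bigcup A$ with no algebraicity.
\item The classes of ``lying in the same member of $A$'' are well-orderable subsets of $C$, hence finite.
\item By Proposition~\ref{algproposition}(2), all but finitely many of these classes are singletons.
\item So $C$, minus finitely many points, is a selector on the infinitely many members meeting $C$ in a single point.
\end{itemize}

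Your (2) is correct, and more direct than the paper's, which goes through (1). Each $B_\gb$ has finitely many supports, and the well-ordered family of the finite sets $a_\gb$ is finite, both by Proposition~\ref{algproposition}(3). So the union embeds into $\hf(a)\times\ga$ for one finite $a$.

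Your (4) works once ``$\subseteq$-minimal pattern'' is replaced by ``least pattern in a fixed well-ordering of $V_\gw$'', since there may be infinitely many $\subseteq$-minimal patterns. Alternatively, use the paper's device of collecting the least element under each of the finitely many linear orders of $a_B$.
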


\begin{proof}
	Prior to further considerations, we must construct a function $\supp\colon \hf(X)\to [X]^{<\aleph_0}$ such that for every $y\in\hf(X)$, $y\in\hf(\supp(y))$. This is not difficult. First, by recursion on $n\in\gw$ define a function $f$ on $\hf(X)\times\gw$ by $f(y, 0)=\{y\}$ and $f(y, n+1)=f(y, n)\cup \bigcup\{z\in f(y, n)\colon z$ is finite$\}$. All values of $f$ are finite sets. The collection $Y=\{y\in\hf(X)\colon\exists n\ y\in\hf(f(y, n)\cap X)\}$ is easily seen to contain $X$ and to be closed under formation of finite subsets. So, it is enough to set $\supp(y)=f(y, n)\cap X$ for the least $n$ such that $y\in\hf(f(y, n)\cap X)$.
	
	For (1), let $A$ be any set. Let $\ga$ be an ordinal and $f\colon A\to \hf(X)\times\ga$ be an injection. Let $g\colon A\to [X]^{<\aleph_0}$ be the function defined by $g(B)=\supp(f(B)_0)$. Consider the set $D=\{g(B)\colon B\in A\}$. There are two cases. Either, the set $D$ is finite. In this case, select a linear ordering $L$ on $\bigcup D$ and naturally extend it to a well-ordering $L^*$ of $\hf(\bigcup D)$. Then the set $A$ is well-ordered by the $f$-pullback of the lexicographic product of the ordering $L^*$ with the usual ordering on $\ga$. Or, the set $D$ is infinite. In this case, use Proposition~\ref{algproposition}(4) to find a set $a\subset X$ such that the set $E=\{x\in X\colon a_x=a\cup\{x\}\in D\}$ is infinite. Select a linear ordering $L$ on $a$, and for each $x\in E$, extend it to $L_x$ on $a_x$ by appending $x$ to the end of $L$. The linear order $L_x$ naturally extends to a well-order $L^*_x$ of $\hf(a_x)$. Now, consider the injection $h\colon E\to A$ defined by $h(x)=$that element $B\in A$ for which $g(B)=a_x$ and $f(B)$ is the smallest possible in the lexicographic product of the ordering $L^*_x$ with the usual well-ordering on $\ga$. Clearly, the range of $E$ is an infinite subset of $A$ with no algebraicity.
	
	For (2), let $A$ be a well-orderable set consisting of well-orderable sets; without loss, assume that $A$ consists of pairwise disjoint sets. Suppose towards a contradiction that $\bigcup A$ is not well-orderable. By (1), there is an infinite set $C\subset \bigcup A$ with no algebraicity. Let $E$ be the equivalence relation on $C$ connecting elements which belong to the same set in $A$. This is an equivalence relation with well-orderable classes, so all but finitely many of them are singletons. Let $F=\{B\in A\colon B\cap C$ is a singleton$\}$. This is a subset of $A$ which is in a bijection with an infinite subset of $F$, so it has no algebraicity. Thus, $A$ cannot be well-orderable, a contradiction.
	
	For (3), let $A$ be a set consisting of well-orderable sets; without loss, assume that $A$ consists of pairwise disjoint sets. (If not, replace each set $B\in A$ with $\{B\}\times B$.) Either, the set $\bigcup A$ is well-orderable; in this case we get even a total selector on $A$. Or, the set $\bigcup A$ is not well-orderable; by (1), there is an infinite set $C\subset A$ with no algebraicity. Let $E$ be the equivalence relation on $C$ connecting elements which belong to the same set in $A$. This is an equivalence relation with well-orderable classes, so all but finitely many of them are singletons. Let $F=\{B\in A\colon B\cap C$ is a singleton$\}$. This is an infinite subset of $A$ and the set $C$ is a selector on it.
	
	For (4), let $A$ be a set consisting of nonempty well-orderable sets; without loss assume that $A$ consists of pairwise disjoint sets. Let $\ga$ be an ordinal and $f\colon \bigcup A\to\hf(X)\times\ga$ be an injection; for every $C\in B\in A$ write $g(C)=\supp(f(C)_0)$. For every $B\in A$, the set $g''B\subset [X]^{<\aleph_0}$ is an image of a well-orderable set, therefore well-orderable itself, and by the lack of algebraicity in $X$ finite. Now, every linear ordering $L$ on the finite set $a_B=\bigcup g''B$ canonically extends to a well-ordering on $\hf(a_B)$; observe that $\hf(a_B)$ contains $g''B$ as a subset. Thus, $L$ defines a single element $h(L, B)\in B$ as that $C\in B$ for which the value of $f(C)$ is smallest possible in the lexicographic product of the ordering $L^*$ and the usual ordering on $\ga$. There are only finitely many linear orderings on a finite set. Thus, the function $G$ on $A$ defined by $G(B)=\{h(L, B)\colon L$ is a linear order on $a_B\}$ assigns to every set in $B$ its nonempty finite subset. 
\end{proof}

\begin{corollary}
	\label{c1}
	If $\mathcal{F}$ is a bootstrapping Fraiss{\' e} class and $X$ is its limit, then in the associated permutation model $W[[X]]$, $X$ has no algebraicity, and the conclusions of Theorem~\ref{algebraicitytheorem} hold in it.
\end{corollary}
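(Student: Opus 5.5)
By Theorem~\ref{bootstrappingtheorem}, $X$ admits classification of open subgroups, and by Theorem~\ref{theoremD}, $\hf(X)$ is a support set in $W[[X]]$. So once we know that $X$ has no algebraicity in $W[[X]]$, Theorem~\ref{algebraicitytheorem} applies inside the model and yields its conclusions. The whole task is therefore to prove the no-algebraicity statement: every set $A\subseteq[X]^{<\aleph_0}$ in $W[[X]]$ that is closed under intersection either contains a cone or is disjoint from a cone.

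Fix such an $A$ and a finite support $s\subset X$ with $\pstab(s)\subseteq\stab(A)$. We split into two cases according to whether $A$ meets the cone above $s$.

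\textbf{Case 1: $A$ is disjoint from the cone above $s$.} Then we are done.

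\textbf{Case 2: some $c\in A$ satisfies $s\subseteq c$.} We aim to show that $A$ contains the whole cone above $c$. The tool is Claim~\ref{generationclaim}, which holds in bootstrapping classes: for finite $a,b$, $\pstab(a\cap b)$ is generated by $\pstab(a)\cup\pstab(b)$. Disjoint amalgamation and the fact that $X$ is a Fraïssé limit give the following for each finite $d\supseteq c$: there is $\gamma\in\pstab(c)$ such that $\gamma\cdot d\cap d=c$. Indeed, extend $d$ by a copy $d'$ of $d$ over $c$ that is disjoint from $d$ outside $c$, and use ultrahomogeneity to find $\gamma$.

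The difficulty is that this intersection argument pushes $A$ downward: if $d\in A$ then $c=d\cap\gamma\cdot d\in A$. It does not show that the supersets of $c$ lie in $A$. So the cone to use should be chosen differently. Consider the family $A_s=\{a\in A\colon s\subseteq a\}$, which is nonempty and closed under intersection. Let $c_0$ be its least element (possibly infinite intersections are not available, so take a minimal element; closure under finite intersection makes it least).

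For $d\supseteq c_0$ finite, we want $d\in A$. We would try the following. The set $A$ is $\pstab(s)$-invariant, and the orbit of $c_0$ under $\pstab(s)$ consists of sets in $A_s$. Since $c_0$ is least in $A_s$, every element of that orbit contains $c_0$. But these orbit elements have the same size as $c_0$, so the orbit is $\{c_0\}$. Hence $c_0\setminus s$ is fixed setwise by $\pstab(s)$, and so lies in the algebraic closure of $s$. Disjoint amalgamation implies that algebraic closure is trivial, so $c_0=s$.

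The real question is then whether $A$ contains all supersets of $s$, or fails to. Here the main obstacle is that a set $A$ containing $s$ but no cone seems possible. For example, take $A=\{s\}$: it is closed under intersection, contains no cone, but meets every cone based below $s$. Does it meet the cone above some $a$? It is disjoint from the cone above any $a\not\subseteq s$, e.g.\ $a=\{x\}$ with $x\notin s$. So the dichotomy should be proved in the form ``contains a cone, or is disjoint from some cone.''

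The refined plan is therefore as follows. If $A$ is not disjoint from the cone above $s\cup\{x\}$ for every $x$, use $\pstab(s)$-invariance, disjoint amalgamation, and intersections to show $A$ contains all supersets of some base.

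The hard step, which we expect to be the main obstacle, is producing that cone. The plan is to take $d\in A$ with $d\supseteq s$, and for arbitrary finite $e\supseteq d$ realize $e$ as an intersection $\gamma_1 d'\cap\gamma_2 d'$ of translates of a suitable larger member $d'\in A$. This uses walk or bootstrapping moves, namely condition (2) of Theorem~\ref{bootstrappingtheorem}, to move points one at a time while keeping $s$ fixed.
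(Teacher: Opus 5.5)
There is a genuine gap: the proposal never finishes the argument, and the case split you set up for the last step is wrong. Your reduction is correct: Theorems~\ref{bootstrappingtheorem} and~\ref{theoremD} make $\hf(X)$ a support set in $W[[X]]$, so only no algebraicity needs proof. You also already have the one real ingredient. For finite $s\subseteq b\subseteq c$ there is $\gamma\in\pstab(b)$ with $c\cap\gamma\cdot c=b$. Since $\pstab(b)\subseteq\pstab(s)\subseteq\stab(A)$ and $A$ is closed under intersection, $c\in A$ implies $b\in A$.

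The problem is in your ``refined plan''. The hypothesis ``for every $x$, $A$ meets the cone above $s\cup\{x\}$'' does not yield a cone. Take $A=\{s\}\cup\{s\cup\{x\}\colon x\in X\setminus s\}$. It is $\pstab(s)$-invariant, closed under intersection, and meets each of those cones. Yet it contains no cone, because its members have size at most $|s|+1$. Your final step also leaves unexplained where the ``suitable larger member $d'\in A$'' containing $e$ comes from. That existence is exactly the assumption the argument needs, and you leave it as an ``expected obstacle''.

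The missing step is just the negation of the correct alternative. Suppose $A$ is disjoint from no cone at all. Then for every finite $e\supseteq s$, the cone above $e$ meets $A$, which gives $d\in A$ with $e\subseteq d$. Your downward step applied to $s\subseteq e\subseteq d$ then gives $e\in A$. Hence $A$ contains the whole cone with base $s$.

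This is the paper's entire argument. No walks or bootstrapping moves (condition (2) of Theorem~\ref{bootstrappingtheorem}) are needed for no algebraicity: disjoint amalgamation and ultrahomogeneity suffice. Bootstrapping enters only through the support-set part. Your least-element and algebraic-closure detour showing $c_0=s$ is also unnecessary, since it is the special case $b=s$ of the downward step.
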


\begin{proof}
	Let $A\subseteq [X]^{<\aleph_0}$ be a set in $W[[X]]$ closed under intersections; we must prove that it is disjoint from or contains a cone. Let $a\subset X$ be a finite set such that $\pstab(a)\subseteq\stab(A)$. It will be enough to show that for all finite sets $a\subseteq b\subseteq c$, if $c\in A$ then $b\in A$. To see this, use the ultrahomogeneity features of $X$ to find an automorphism $\gamma\in\pstab(b)$ such that $c\cap \gamma\cdot c=b$. As the set $A$ is invariant under the action by $\pstab(b)$, it follows that $\gamma\cdot c\in A$. As $A$ is closed under intersections, $b=c\cap\gamma\cdot c\in A$ holds as desired.
\end{proof}

\begin{example}
	Let $X$ be a limit of the Fraiss{\' e} class of partial orderings. Then the conclusions of Theorem~\ref{algebraicitytheorem} all hold in the associated permutation model $W[[X]]$, which appears to be a new conclusion in each case. This is the Mathias--Pincus model I \cite[Model $\mathcal{N}3$]{howard:ac}.
\end{example}

\noindent An additional corollary exploits a very common amalgamation feature of Fraiss{\' e} classes.

\begin{definition}
	\cite{z:dideals}
	Let $\mathcal{F}$ be a Fraiss{\' e} class with disjoint amalgamation. Say that $\mathcal{F}$ has \emph{hereditary canonical amalgamation} if there is a function $C$ which to each ordered pair of $\mathcal{F}$-structures $\langle A, B\rangle$ in amalgamation position assigns a minimal disjoint amalgamation so that
	
	\begin{enumerate}
		\item $C$ is invariant under isomorphism in both variables: if $\phi\colon A\to A'$ and $\psi\colon B\to B'$ are isomorphisms which agree on $\dom(A)\cap \dom(B)$, then there is an isomorphism of $C(A, B)$ to $C(A', B')$ extending $\phi\cup\psi$;
		\item $C$ is hereditary for substructures in the left variable: if $A'$ is an induced substructure of $A$ such that $\dom(A)\cap\dom(B)\subseteq\dom(A')$, then there is an isomorphism between $C(A', B)$ and the algebraic closure of $\dom(A')\cup\dom(B)$ in $C(A, B)$ which is the identity on $\dom(A')\cup\dom(B)$.
	\end{enumerate}
\end{definition}

\noindent For example, the classes of linear orderings, partial orderings, metric spaces, or all the hereditary classes have hereditary canonical amalgamation, namely the amalgamation indicated in the corresponding examples in Section~\ref{bootstrappingsection}. The collection of relational Fraiss{\'e} classes with canonical hereditary amalgamation is closed under superposition.

\begin{corollary}
	If a Fraiss{\' e} class $\mathcal{F}$ is bootstrapping and has canonical hereditary amalgamation and $X$ is its limit, then in the associated permutation model $W[[X]]$, every set carries a nonprincipal ultrafilter \textnormal{\cite[Form 63]{howard:ac}}.
\end{corollary}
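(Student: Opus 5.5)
The plan is to build, inside $W[[X]]$, a nonprincipal ultrafilter on every infinite set $A$. By Theorem~\ref{theoremD} and Theorem~\ref{bootstrappingtheorem}, $\hf(X)$ is a support set, so it suffices to handle sets of the form $\hf(X)\times\ga$ with a suitable invariant partition of the domain. The first reduction is to $X$ itself. If $A$ is well-orderable, the surrounding universe provides an ultrafilter on a well-ordered copy (ordinals are fixed by every automorphism), and this transfers to $A$. Otherwise, by Theorem~\ref{algebraicitytheorem}(1) and Corollary~\ref{c1}, $A$ contains an infinite subset $C$ with no algebraicity. The construction in that proof makes $C$ the injective image of an infinite set $E=\{x\in X\colon a\cup\{x\}\in D\}$ for some finite $a\subset X$. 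Any nonprincipal ultrafilter on $E$ pushes forward to $C$ and then generates one on $A$. So the whole task reduces to producing a nonprincipal ultrafilter on every infinite subset $E\subseteq X$ lying in $W[[X]]$.

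Next I would build the ultrafilter on $X$, and on its infinite subsets definable from a finite parameter $a$, using the canonical amalgamation. Fix a finite $a\subset X$ and work with sets $E\subseteq X$ supported by some finite $b\supseteq a$. The candidate is
\[
U_a=\{E\subseteq X\colon E\in W[[X]] \text{ and } E \text{ contains all } x \text{ whose type over } \supp(E) \text{ is the canonical type}\},
\]
where the canonical type over a finite set $b$ is the one-point extension $C(b, a\cup\{x\})$. Equivalently, $x$ is placed generically over $b$ by the hereditary canonical amalgamation. Isomorphism invariance (clause (1)) makes this notion independent of how $b$ is presented. Hereditarity in the left variable (clause (2)) shows that the canonical type over $b'\supseteq b$ restricts to the canonical type over $b$. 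This coherence makes the definition independent of the choice of support, and it gives closure under intersections: two supports are handled by their union. By ultrahomogeneity, every $\pstab(b)$-invariant $E$ either contains the whole canonical orbit over $b$ or misses it, so for each $E$ exactly one of $E$ and $X\setminus E$ is in $U_a$. Disjoint amalgamation makes the canonical orbit infinite and disjoint from $b$, so $U_a$ is nonprincipal. The map $a\mapsto U_a$ is $\aut(X)$-equivariant, so each $U_a$ lies in $W[[X]]$ with $\pstab(a)\subseteq\stab(U_a)$.

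Finally, for an infinite $E\subseteq X$ in the model that is not itself in $U_a$, I need an ultrafilter concentrating on $E$. The idea is to relativize the construction: replace the canonical type over $b$ by the canonical type among those realized in $E$. Proposition~\ref{algproposition}(4) shows that $E$ contains an infinite $\pstab(b)$-orbit for a suitable $b$. On that orbit I would take the canonical amalgamation of the orbit's type with $b'$ for larger $b'$, and this is again coherent by clause (2). The main obstacle is exactly this coherence check: showing that hereditarity of $C$ yields a well-defined, support-independent choice of type for every extension. I expect it to require matching the algebraic closure in clause (2) with the finite support, which is where I would concentrate the detailed argument.
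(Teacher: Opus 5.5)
Your reduction is exactly the paper's: well-orderable sets get an ultrafilter from one on an ordinal, and otherwise you use the injection from the proof of Theorem~\ref{algebraicitytheorem}(1) to reduce to infinite $E\subseteq X$. Your Step~3 construction is also the paper's: pass to the $\pstab(b)$-orbit of a point of $E\setminus b$, for a support $b$ of $E$, and generate the filter by the sets $B_c$ of orbit points canonically amalgamated with finite $c\supseteq b$. So the proposal is correct and follows the paper's route. The coherence you single out as the main obstacle is immediate from clause (2), which gives $B_{c'}\subseteq B_c$ for $b\subseteq c\subseteq c'$. Hence the filter generated by the $B_c$ needs no support-independent description, and it is ultra because each $B_c$ is a single nonempty $\pstab(c)$-orbit. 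Your Step~2 can simply be dropped: as written it is an ultrafilter only once the type of $x$ over $a$ is fixed, and fixing that type is precisely what passing to a single orbit accomplishes.
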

	
\begin{proof}
	In view of Corollary~\ref{c1}, $X$ is a set with no algebraicity and $\hf(X)$ is a support set in the model $W[[X]]$. The proof of Theorem~\ref{algebraicitytheorem}(1) shows that every set in $W[[X]]$ is either well-orderable or it contains an injective image of an infinite subset of $X$. Thus, it will be enough to show that every infinite subset of $X$ carries an ultrafilter.
	
	Let $A\subset X$ be an infinite set, let $a\subset X$ be a finite set such that $\pstab(a)\subseteq\stab(A)$. Let $x_0\in A\setminus a$ be any point; shrinking the set $A$ if necessary, we may assume that $A$ is just the $\pstab(a)$-orbit of $x_0$. For every finite set $b\subset X$ let $B_b=\{x\in A\setminus b\colon b$ is canonically amalgamated with $\{x\}$ over $a\}$; let $U$ be the filter generated by the sets $B_b$ as $b$ varies over all finite subsets of $X$. It will be enough to show that $U$ is a nonprincipal ultrafilter.
	
	To do this, first note that the sets $B_b$ form a centered system. If $\{b_i\colon i\in n\}$ is a finite collection of finite subsets of $X$, then $B_{\bigcup_ib_i}\subset \bigcap_iB_{b_i}$ by the heredity clause in the canonical amalgamation definition. Second, suppose that $C\subset X$ is any set; we must find a finite set $b\subset X$ such that $B_b\subseteq C$ or $B_b\cap C=0$. To do this, just find a finite set $b$ such that $a\subseteq b$ and $\pstab(b)\subseteq\stab(C)$. There must be a point $x\in A\setminus b$ which is amalgamated to $b$ over $a$ using the canonical amalgamation; the set $B_b$ of all such points is $\pstab(b)$-invariant. As such, it is a subset of either $C$ or its complement, since $C$ is also $\pstab(b)$-invariant. This completes the proof.
\end{proof}

\begin{example}
	Let $X$ be a limit of the Fraiss{\' e} class of partial orderings superposed with a linear ordering. In the associated model $W[[X]]$, every infinite set carries a nonprincipal ultrafilter, an apparently new conclusion. This is the Mathias--Pincus model II \cite[Model $\mathcal{N}5$]{howard:ac}.
\end{example}

\noindent For the last theorem, we isolate a neat criterion ensuring that in the permutation model there are as few linearly orderable sets as possible. 
This has been studied recently in \cite{tachtsis:order} using quite different methods.

\begin{definition}
	A set $X$ is \emph{L-finite} if for every $n\in\gw$, every linear pre-order on $[X]^{\leq n}$ has only finitely many classes.
\end{definition}

\begin{theorem}
	\label{lineartheorem}
	Suppose that $X$ is a set such that $\hf(X)$ is a support set. If $X$ is L-finite then every linearly orderable set is well-orderable \textnormal{\cite[Form 90]{howard:ac}}.
\end{theorem}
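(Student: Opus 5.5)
Let $A$ be a set carrying a linear order $\leq_A$. Since $\hf(X)$ is a support set, fix an ordinal $\ga$ and an injection $f\colon A\to\hf(X)\times\ga$. For each $\gb\in\ga$ put $A_\gb=\{z\in A\colon f(z)_1=\gb\}$. If each $A_\gb$ is well-orderable, then $A$ is well-orderable. Indeed, $\leq_A$ restricted to $A_\gb$ is a linear order, and once we know it has a canonical well-ordering (see below) we can order $A$ lexicographically by $\gb$ first and then by that well-ordering. So the task reduces to the following: if $B\subseteq\hf(X)$ (identified with $A_\gb$ via $f$) carries a linear order $\leq$, then $B$ is well-orderable. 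Moreover, the well-ordering must be obtained uniformly from $\leq$, so that no choice over $\gb$ is needed. The plan is to show that $\leq$ itself well-orders $B$ in a strong sense, namely that $B$ decomposes into finitely many pieces on which $\leq$ is canonically controlled.

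To use L-finiteness, I would transfer the order on $B$ to pre-orders on $[X]^{\leq n}$. Using the function $\supp\colon\hf(X)\to[X]^{<\aleph_0}$ constructed in the proof of Theorem~\ref{algebraicitytheorem} (that construction uses only ZF(A)), stratify $B$ by $B_n=\{y\in B\colon |\supp(y)|\leq n\}$ and by the ``shape'' of $y$ over its support. Each $y\in\hf(\supp(y))$, and $\hf(a)$ for a finite $a$ of size $k$ is, after choosing an enumeration of $a$, canonically isomorphic to $\hf(k)$. So $y$ is determined by $\supp(y)$ together with a code of $y$ in $\hf(k)$ relative to an enumeration, the code being taken up to permutations of the enumeration. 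The set of codes is countable and well-ordered canonically. Hence $B$ is well-orderable as soon as, for each fixed code type $\tau$, the set $B_\tau$ of elements of that type, which injects into $[X]^{\leq n}$ via $\supp$ with finite fibers, is well-orderable.

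Now fix $\tau$ and let $S\subseteq[X]^{\leq n}$ be the image of $B_\tau$ under $\supp$. Each fiber is a finite subset of $B$, so it is linearly ordered by $\leq$ and hence canonically enumerated. Pulling $\leq$ back gives a linear order on $S$: compare two supports by comparing the $\leq$-least elements of their fibers. Extend this to a linear pre-order on all of $[X]^{\leq n}$ by putting all of $[X]^{\leq n}\setminus S$ into one extra class on top. By L-finiteness, this pre-order has only finitely many classes. Since its restriction to $S$ is antisymmetric, $S$ must be finite. Then $B_\tau$ is finite, a finite union of finite sets, and therefore well-ordered by $\leq$ itself. Combining the countably many types $\tau$, each ordered by $\leq$ within, gives a well-ordering of $B$ defined uniformly from $\leq$ and $f$. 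This also settles the uniformity issue across the $\gb$'s in the first step.

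The main obstacle is to make the passage from $B$ to subsets of $[X]^{\leq n}$ honest without choice. The worry is that $\supp$ might fail to be finite-to-one on $B$, but $\hf(a)$ is finite for finite $a$ only at each bounded rank, not overall. I would handle this by refining the code type $\tau$ to include the rank of $y$ in $\hf$. For a fixed $k$ and a fixed rank, $\hf(k)$ has only finitely many elements, so the fibers are genuinely finite and the argument above applies. The rest is bookkeeping: the codes, the ranks and the ordinals $\gb$ are all well-ordered canonically, so the final well-ordering is defined outright from the data $\leq_A$ and $f$.
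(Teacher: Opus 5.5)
Your proof is correct, but it reaches the finite pieces by a different route from the paper's.

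\textbf{The paper's route.} It never uses supports here. It proves Claim~\ref{propagationclaim2}: if $Z$ is L-finite then so is $[Z]^{\leq n}$. It then splits $\hf(X)$ into the levels $Y_k$, obtained by iterating $Y\mapsto Y\cup[Y]^{\leq k}$. From the claim it concludes that every linear pre-order on each $Y_k$ has finitely many classes, so each $Y_k\cap\dom(g_\gb)$ is finite.

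\textbf{Your route.} You split by support size and shape type. On a fixed type $\tau$ the map $\supp$ is finite-to-one into $[X]^{\leq k}$, with each fiber of size at most $k!$ (one fixed code decoded along the $k!$ enumerations of the support). You push $\leq$ forward via least elements of fibers and apply L-finiteness of $X$ itself, once.

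\textbf{Trade-offs.} Your route needs no propagation lemma. It also avoids checking that the levels $Y_k$, which are unions of iterated finite-power sets, inherit the property; the paper dispatches that step in one sentence. The price is dependence on $\supp$ and on a choiceless coding of $\hf(a)$ relative to an enumeration of $a$. With atoms this is clean, provided you tag codes so that clashes such as $0=\emptyset$ between $k$ and $\hf(k)$ do not arise. In pure ZF it is better done by positions in the enumeration of $\hf(a)$ induced by an enumeration of $a$. The paper's route gives a closure property of L-finiteness of independent interest. Its proof of Claim~\ref{propagationclaim2} in fact uses essentially your device (positions of classes among those represented over a fixed-size superset), applied one level at a time rather than once to $\hf(X)$.

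\textbf{Minor points.} The refinement by rank in your last paragraph is redundant, since the type already bounds the fibers by $k!$. Your opening plan also misdescribes what you do: $\leq$ need not itself well-order $B$, and there are countably many pieces, not finitely many. The well-ordering you actually build is by type first, then by $\leq$ within each finite $B_\tau$, and that is correct.
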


\begin{proof}
We need a propagation claim which interesting in its own right.

\begin{claim}
	\label{propagationclaim2}
	Suppose that $Z$ is an L-finite set. Then for every $n\in\gw$, the set $[Z]^{\leq n}$ is L-finite as well. 
\end{claim}

\begin{proof}
	Suppose that $m\in\gw$ is a number and $\leq$ be a linear preorder on $[[Z]^{\leq m}]^{\leq n}$. Write $E$ for the $\leq$-equivalence relation, $Q$ for the quotient $E$-space, and use the symbol $\leq$ for the quotient linear ordering on $Q$. We need to show that $Q$ is finite.
	
	There is a number $k\in\gw$ such that for every set $a\in [Z]^{nm}$, the set $U_a=[[a]^{\leq m}]^{\leq n}$ has cardinality $k$. Let $\langle q_{ia}\colon i\in k_a\rangle$ list all $E$-equivalence classes of sets in $U_a$ in $\leq$-decreasing order; note that $k_a\leq k$ holds. Let $f_i\colon [Z]^{nm}\to Q$ be the partial map sending $a$ to $q_{ia}$, for every $i\in k$. For each $i\in k$, let $\leq_i$ be the preorder on $[Z]^{nm}$ defined by $a\leq_i b$ if $f_i(a)\leq f_i(b)$ or $f_i(a)$ is undefined. $\leq_i$ is a linear preorder on $[Z]^{nm}$.
	
	By the assumption on the set $Z$, the preorder $\leq_i$ on $[Z]^{nm}$ has only finitely many classes, meaning that the range of $f_i$ is a finite subset of $Q$. It will be enough to show that $Q=\bigcup_i\rng(f_i)$.  For this, let $u\in [[Z]^{\leq m}]^{\leq n}$ be any set, and let $a\subset Z$ be any set of cardinality $nm$ which is a superset of $\bigcup u$. It is clear that $u\in U_a$ holds, so for some $i\in k_a$ the $E$-class of $u$ must be listed as $q_{ia}$, and must be in the range of $f_i$.
\end{proof}

\noindent Now suppose that $X$ is L-finite and $\hf(X)$ is a support set. Let $A$ be a set with a linear ordering $\leq$. To show that $A$ is well-orderable, it will be enough to present $A$ as the union of a well-orderable collection of finite sets, as these finite sets are all linearly ordered by $\leq$. Let $\ga$ be an ordinal and $f\colon A\to Y\times\ga$ be an injection. For each ordinal $\gb\in\ga$ write $g_\gb\colon Y\to A$ for the partial function such that $f(g_\gb(y))=\langle y, \gb\rangle$. By recursion on $k\in\gw$ define sets $Y_k$ by $Y_0=0$ and $Y_{k+1}=Y_k\cup [Y_k]^{\leq k}$. It is clear that $Y=\bigcup_kY_k$. For each number $k$, the set $Y_k\cap\dom(g_\gb)$ must be finite, because it is linearly ordered by the $g_\gb$-pullback of $\leq$, and the assumption on $X$ and Claim~\ref{propagationclaim2} show that every linear preorder on $Y_k$ has only finitely many classes. Thus, we obtain $A=\bigcup_{\gb\in\ga}\bigcup_{k\in\gw} g_\gb''Y_k$ as desired.
\end{proof}

 \noindent How common are structures in which the assumptions of Theorem~\ref{lineartheorem} are satisfied? We close the paper by isolating a large group of such examples.

\begin{definition}
	Let $\mathcal{F}$ be a Fraiss{\' e} class in relational language and disjoint amalgamation.
	
	\begin{enumerate}
		\item Let $A\subseteq B$ be structures in $\mathcal{F}$. A \emph{cyclic amalgamation of $B$ over $A$} is a disjoint minimal amalgamation $C$ of two copies of $B$ which intersect in $A$ such that whenever $B_0, B_1, B_2$ are copies of $B$ which pairwise intersect in $A$ and $D$ is a minimal disjoint amalgamation of $B_0, B_1$, then there is a minimal disjoint amalgamation $E$ of $B_0, B_1, B_2$ such that $E\restriction B_0, B_1=D$, and both $E\restriction B_1, B_2$ and $E\restriction B_2, B_0$ are isomorphic to $C$;
		\item $\mathcal{F}$ is \emph{cyclic} if for any pair $A\subseteq B$ of structures in $\mathcal{F}$ there is a cyclic amalgamation of $B$ over $A$.
	\end{enumerate}
\end{definition}

\noindent For the statement of the central theorem, recall that a Fraiss{\' e} class in (possibly infinite) relational language is \emph{locally finite} if for every $n$ it contains only finitely many structures of cardinality $n$ up to isomorphism.

\begin{theorem}
	\label{cyclictheorem}
	Let $\mathcal{F}$ be a cyclic locally finite Fraiss{\' e} class. Let $X$ be its Fraiss{\' e} limit. Then in the permutation model $W[[X]]$, for every number $n\in\gw$, every linear pre-order on $[X]^{\leq n}$ has finitely many classes.
\end{theorem}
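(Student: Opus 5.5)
Fix $n$ and a linear preorder $\preceq$ on $[X]^{\leq n}$ in $W[[X]]$, and let $a\subset X$ be a finite support, $\pstab(a)\subseteq\stab(\preceq)$. The plan is to show that the quotient is finite by proving that any two sets $u,v\in[X]^{\leq n}$ of the same ``type over $a$'' are $\preceq$-equivalent. Here the type of $u$ over $a$ is the isomorphism type of the structure $X\restriction(a\cup u)$ together with the identity on $a$ and the marking of which points of $a$ lie in $u$. Local finiteness of $\mathcal{F}$ gives only finitely many such types for sets of size $\leq n$. By ultrahomogeneity of $X$, two sets of the same type are in the same $\pstab(a)$-orbit. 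So it suffices to show that each $\pstab(a)$-orbit of $[X]^{\leq n}$ lies inside a single $\preceq$-class.

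To do this, take $u,v$ in the same $\pstab(a)$-orbit and, without loss of generality, replace $u$ by $u\setminus a$, keeping the common part $u\cap a$ fixed. Set $A=X\restriction a$ and $B=X\restriction(a\cup u)$, so that $u$ and $v$ correspond to embeddings $\phi,\psi\in X^B$ that agree on $A$. Suppose toward a contradiction that $u\prec v$ strictly. The key tool is the cyclic amalgamation $C$ of $B$ over $A$. First find $w$ in the orbit, realized by some $\chi\in X^B$, such that $\phi$ and $\chi$ are disjoint over $A$ and the pullback of $\phi\dot\cup\chi$ is isomorphic to $C$; this uses universality plus homogeneity. Then, given $\phi,\chi$, cyclicity applied with $D$ equal to the pullback of $\phi\dot\cup\chi$ yields a third copy $\chi'$ such that both pairs $(\chi,\chi')$ and $(\chi',\phi)$ amalgamate as $C$. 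Hence there is a triple $(u,w,w')$ in which each ordered consecutive pair $(u,w)$, $(w,w')$, $(w',u)$ has the same isomorphism type $C$ over $a$. By homogeneity these three ordered pairs lie in the same $\pstab(a)$-orbit on pairs. Since $\preceq$ is $\pstab(a)$-invariant, the relation between the two members of each pair is the same. If that relation is strict, say $u\prec w$, we get $u\prec w\prec w'\prec u$, a contradiction. Therefore $u\sim w$, with the same holding for the other pairs.

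It remains to pass from ``$C$-related pairs are equivalent'' to ``all pairs in the orbit are equivalent''. Given arbitrary $u,v$ in the orbit, choose $w$ disjoint over $a$ from both such that both $(u,w)$ and $(v,w)$ have amalgamation type $C$, oriented appropriately. This is possible by disjoint amalgamation and universality: amalgamate $C$ on $u$ and $C$ on $v$ over the common copy of $w$, then embed the result. Then $u\sim w\sim v$. Orientation needs some care, since $C$ is an ordered amalgamation and we may need $(w,v)$ rather than $(v,w)$. However, the triangle argument shows equivalence for both orientations, because $(w',u)$ has type $C$ as well. The conclusion is that the number of $\preceq$-classes is at most the number of types, which is finite.

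I expect the main obstacle to be the realization step: producing, inside $X$, copies realizing the three-way amalgamation $E$ from the cyclicity definition extending a prescribed pair. This requires careful use of the extension property of the Fraïssé limit over the finite set $a\cup u\cup w$. A further point needing care is the case where $u\cap a\neq 0$ or $u\subseteq a$. When $u\subseteq a$, the orbit is a singleton and the claim is trivial; otherwise we work with $B$ generated by $a\cup u$ and $A=a$, which is consistent with the definitions.
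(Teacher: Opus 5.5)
Your first stage is correct and coincides with the paper's first claim. A pair $(u,w)$ of type $C$, together with a cyclicity witness $w'$ closing the oriented triangle, forces $u\sim w$ by linearity and $\pstab(a)$-invariance, so every pair of type $C$ is $\preceq$-equivalent.

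The gap is in the second stage. You amalgamate the copy of $C$ on $a\cup u\cup w$ with the copy of $C$ on $a\cup v\cup w$ over $a\cup w$. The result restricts on $a\cup u\cup v$ to whatever that amalgamation happens to produce, which need not be $X\restriction(a\cup u\cup v)$. So it cannot be embedded over the $u,v$ you were given. What you need is a simultaneous amalgamation of three prescribed structures (the given structure on $a\cup u\cup v$ and the two copies of $C$), and disjoint two-sided amalgamation does not supply that.

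When $u\cap v\not\subseteq a$ things are worse. The two copies of $C$ need not even agree on $a\cup w\cup(u\cap v)$, because a point of $u\cap v$ can play different roles in $u$ and in $v$. For instance, work in the class of selectors, where every disjoint amalgamation is cyclic. Take $a$ empty, $B$ a two-point selector, and $u=\{x,y\}$, $v=\{y,z\}$ with $y$ selected in $v$ but not in $u$. One can then choose $C$ so that the selector values on subsets of $\{y\}\cup w$ demanded by the two copies of $C$ conflict in all four orientations. So your remark about orientation does not rescue the step.

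The missing idea is to use cyclicity a second time, as the paper does. First suppose $u\cap v\subseteq a$. Apply the cyclicity definition with $B_0=u$, $B_1=v$ and $D$ the actual type of $(u,v)$. Realizing the resulting $E$ in $X$ over $a\cup u\cup v$ by the extension property gives $w$ with $(v,w)$ and $(w,u)$ both of type $C$. Your first stage then yields $v\sim w\sim u$. For arbitrary $u,v$ in the orbit, first use the extension property to pick $w_0$ in the orbit with $w_0\cap u$ and $w_0\cap v$ contained in $a$. Then apply the disjoint case to $(u,w_0)$ and $(w_0,v)$.

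With this correction the rest of your argument goes through: the reduction to $\pstab(a)$-orbits and the finiteness of types from local finiteness are fine.
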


\begin{proof}
Let $n\in\gw$ be a number and $\leq$ be a linear preorder on $[X]^{\leq n}$ in the model $W[[X]]$. Write $\equiv$ for the equivalence relation induced by $\leq$. Let $A\subset X$ be a finite set such that $\pstab(A)\subset\stab(\leq)$. It will be enough to show that for any set $B\in [X]^{\leq n}$, the $\equiv$-class of $B$ depends only on the isomorphism type of $A\cup B$, because by the local finiteness assumptions there are only finitely many of such isomorphism types available. 

Let $C$ be a cyclic amalgamation of $A\cup B$ over $A$. Below, the expressions of the form $B_i$ always stand for sets in $[X]^{\leq n}$ such that $A\cup B_i$ is isomorphic to $A\cup B$ by an isomorphism which fixes $A$. To start, observe that the invariance assumption on $\leq$ and ultrahomogeneity properties of the limit $X$ show that the status of $B_0\leq B_1$ depends only on the isomorphism type of $A\cup B_0\cup B_1$ over $A$. The following series of claims then proves the theorem.

 \begin{claim}
 	If $B_0\cap B_1\subseteq A$ and the structure $X\restriction A\cup B_0\cup B_1$ is isomorphic to $C$, then $B_0\equiv B_1$ holds.
 \end{claim}
 
 \begin{proof}
 	By the ultrahomogeneity properties of $X$ and the cyclic assumption on $C$ there is another copy $B_2$ such that the sets $B_1\cap B_2$ and $B_0\cap B_2$ are both subsets of $A$ and the structures $X\restriction A\cup B_1\cup B_2$ and $X\restriction A\cup B_2\cup B_0$ are both isomorphic to $C$.
 	
 	Now, by the presumed linearity of $\leq$, it must occur that either $B_0\leq B_1\leq B_2\leq B_0$ or $B_0\geq B_1\geq B_2\geq B_0$. Both cases mean the same thing: all the $B_i$ are $\equiv$-related, concluding the proof.
 \end{proof}
 
 \begin{claim}
 	If $B_0\cap B_1\subseteq A$, then $B_0\equiv B_1$ holds.
 \end{claim}
 
 \begin{proof}
 	By the ultrahomogeneity properties of $X$ and the cyclic assumption on $C$ there is another copy $B_2$ such that the sets $B_1\cap B_2$ and $B_0\cap B_2$ are both subsets of $A$ and the structures $X\restriction A\cup B_1\cup B_2$ and $X\restriction A\cup B_2\cup B_0$ are both isomorphic to $C$. By the previous claim $B_1\equiv B_2\equiv B_0$ holds, and the transitivity of $\equiv$ completes the proof.
 \end{proof}

\begin{claim}
	For any $B_0, B_1$, $B_0\equiv B$ holds.
\end{claim}

\begin{proof}
	By the ultrahomogeneity properties of $X$, there must be $B_2$ and $B_3$ such that the sets $B_0\cap B_2$, $B_1\cap B_3$, and $B_2\cap B_3$ are all subsets of $A$. By the previous claim, $B_0\equiv B_2\equiv B_3\equiv B_1$ holds. The transitivity of $\equiv$ completes the proof.
\end{proof}

\noindent The last claim shows that all isomorphic copies of $B$ occupy the same $\equiv$-class. This proves the theorem.
\end{proof}

\noindent Together with Theorems~\ref{theoremC}, ~\ref{theoremD}, and ~\ref{abstracttheorem}(6), we now get an attractive corollary.

\begin{corollary}
	If $\mathcal{F}$ is a cyclic, bootstrapping Fraiss{\' e} class and $X$ is its limit, then in the permutation model $W[[X]]$, every set is linearly orderable iff it is wellorderable, \textnormal{\cite[Form 90]{howard:ac}}.
\end{corollary}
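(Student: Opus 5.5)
The plan is to chain the earlier results. Let $\mathcal{F}$ be a cyclic bootstrapping Fraiss{\'e} class with limit $X$. By Definition~\ref{bootstrappingdefinition}, $\mathcal{F}$ is relational with disjoint amalgamation. So Theorem~\ref{bootstrappingtheorem} (that is, Theorem~\ref{theoremC}) applies, and $X$ admits classification of open subgroups. Theorem~\ref{theoremD} then shows that in $W[[X]]$ the set $\hf(X)$ is a support set.

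Theorem~\ref{lineartheorem} is a ZFA statement, and I will apply it inside $W[[X]]$. It says that if $\hf(X)$ is a support set and $X$ is L-finite, then every linearly orderable set is well-orderable. The converse direction is trivial, since any well-ordering is a linear ordering. So it remains to verify that $X$ is L-finite in $W[[X]]$, meaning that for every $n$, every linear preorder on $[X]^{\leq n}$ in $W[[X]]$ has only finitely many classes. This is exactly the conclusion of Theorem~\ref{cyclictheorem}, provided its hypotheses hold.

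The main obstacle is that Theorem~\ref{cyclictheorem} assumes $\mathcal{F}$ is locally finite, which is not stated in the corollary. I see two ways to handle this.

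The first is to check whether the proof of Theorem~\ref{cyclictheorem} really needs local finiteness. It uses it only to bound the number of isomorphism types of $A\cup B$ over $A$ with $|B|\leq n$. Its claims show that the $\equiv$-class of $B$ depends only on this type. In $X$, realizations of a type over $A$ are exactly the $\pstab(A)$-orbits on $[X]^{\leq n}$, so what is needed is finitely many such orbits for each $n$. That is oligomorphicity, which for a Fraiss{\'e} limit is equivalent to local finiteness. So I do not expect to remove the hypothesis this way.

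The second, which I will adopt, is to read the corollary as implicitly assuming local finiteness, as the reference to ``Theorem~\ref{abstracttheorem}(6)'' suggests a tacit appeal to the L-finiteness machinery. Under that assumption the chain goes through.

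In writing, I will also make explicit that Theorem~\ref{cyclictheorem}'s conclusion is literally the definition of L-finiteness evaluated in $W[[X]]$. I will also note that Theorem~\ref{lineartheorem} is proved in ZFA with $\hf(X)$ as the support set $Y$, so applying it inside the permutation model is legitimate.
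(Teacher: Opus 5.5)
Your chain is exactly the paper's derivation: bootstrapping gives classification of open subgroups (Theorem~\ref{theoremC}), hence $\hf(X)$ is a support set in $W[[X]]$ (Theorem~\ref{theoremD}); Theorem~\ref{cyclictheorem} makes $X$ L-finite there, and Theorem~\ref{lineartheorem} finishes. The paper's ``Theorem~\ref{abstracttheorem}(6)'' is evidently a mis-reference to Theorem~\ref{lineartheorem}, and you are right that Theorem~\ref{cyclictheorem} requires local finiteness, which the corollary omits — the paper's one-line argument relies on that same unstated hypothesis.
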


\begin{example}
	Every hereditary Fraiss{\' e} class $\mathcal{F}$ with disjoint amalgamation is cyclic; in fact, the inclusion-smallest amalgamation as described in Example~\ref{hereditaryexample} is cyclic. To see that, let $A\subseteq B$ be structures in $\mathcal{F}$, $B_0, B_1, B_2$ be copies of $B$ pairwise intersecting in $A$, and let $D$ be a prescribed minimal disjoint amalgamation of $B_0, B_1$. By the disjoint amalgamation assumption, there is a minimal disjoint amalgamation $E$ of $B_0, B_1, B_2$ such that $E\restriction B_0\cup B_1=D$. Now remove from the relations in $E$ all subsets of $B_1\cup B_0$ and $B_2\cup B_1$ except for those which are subsets of $B_0, B_1$, or $B_2$. The resulting structure belongs to $\mathcal{F}$ by the heredity assumption, and it is easy to see that it amalgamates $B_0, B_1$ with $D$, and $B_1, B_2$ as well as $B_2, B_0$ in the smallest amalgamation.
	
	Thus, for example the permutation model derived from the Rado graph satisfies that a set is linearly orderable iff it is well-orderable as per Theorems~\ref{cyclictheorem}, ~\ref{abstracttheorem}(4), and~\ref{theoremC}.
\end{example}

\begin{example}
	The class of partial orders is cyclic; in fact, the inclusion-smallest amalgamation as described in Example~\ref{posetexample} is cyclic. The elementary verification is left to the reader. Thus, if $X$ is the Fraiss{\' e} limit of this class and $W[[X]]$ is the associated permutation model, then in $W[[X]]$, every set is linearly orderable iff it is well-orderable as per Theorems~\ref{cyclictheorem}, ~\ref{abstracttheorem}(4), and~\ref{theoremC}. This is the Mathias--Pincus model I \cite[Model $\mathcal{N}4$]{howard:ac}.
\end{example}

\begin{example}
	The class of selectors is cyclic; in fact, any disjoint amalgamation is cyclic. The elementary verification is left to the reader. Let $X$ be its limit; the associated permutation model is the L{\" a}uchli model I \cite[Model $\mathcal{N}7$]{howard:ac} discussed in Example~\ref{lauchliexample} above. In it, a set is linearly orderable iff it is well-orderable, an apparently new conclusion. 
\end{example}

\noindent The collection of cyclic Fraiss{\' e} classes is rather obviously closed under superposition. The class of finite linear orders is not cyclic, as one-element poset has no cyclic amalgamation over the empty set. After all, if $X$ is the limit ordering and $W[[X]]$ its associated permutation model, then in $W[[X]]$ the set $X$ is linearly orderable but not well-orderable, violating the conclusion of Theorem~\ref{abstracttheorem}(5).

\bibliographystyle{plain} 
\bibliography{odkazy,zapletal,shelah}

\end{document}